\numberwithin{equation}{section}
\newenvironment{proof2.1}{\medskip\noindent{\bf Proof of the Theorem 2.1:}\enspace}{\hfill \qed \newline \medskip}
\newenvironment{proof2.2}{\medskip\noindent{\bf Proof of the Theorem 2.2:}\enspace}{\hfill \qed \newline \medskip}
\newtheorem{theorem}{\color{black}\indent Theorem}[section]
\newtheorem{lemma}{\color{black}\indent Lemma}[section]
\newtheorem{proposition}{\color{black}\indent Proposition}[section]
\newtheorem{definition}{\color{black}\indent Definition}[section]
\newtheorem{remark}{\color{black}\indent Remark}[section]
\begin{document}
\title{\LARGE\bf {Classification of blow-up and global existence of solutions to an initial $\textrm{Neumann}$ problem
}
\thanks{
 The research was
 supported by NSFC (11301211).}}
\author[1]{Bin Guo\thanks{Corresponding author:
Email addresses: bguo@jlu.edu.cn(B. Guo)}}
\author[1]{Jingjing Zhang}
\author[1,2]{Menglan Liao}
\affil[1]{School of Mathematics, Jilin University, Changchun, Jilin Province 130012, China}
\affil[2]{Department of Mathematics, Michigan State University, East Lansing,

MI 48824, USA}
\renewcommand*{\Affilfont}{\small\it}
\date{} \maketitle
\vspace{-20pt}

{\bf Abstract:}\ The aim of this paper is to apply the modified potential well method and some new differential inequalities to
  study the asymptotic behavior of solutions to
the initial homogeneous $\hbox{Neumann}$ problem of a nonlinear
diffusion equation driven by the $p(x)$-\hbox{Laplace} operator.
Complete classification of global existence and blow-up in finite time of solutions is given when the initial data satisfies different conditions.
Roughly speaking, we obtain a threshold result for the solution to exist globally
or to blow up in finite time when the initial energy is subcritical and critical, respectively.
Further, the decay rate of the $L^2$ norm is also obtained for global solutions.
Sufficient conditions for the existence of global and blow-up solutions are also provided for supercritical initial energy. At last, we give two-sided estimates of asymptotic behavior when the diffusion term dominates the source.
This is a continuation of our previous work \cite{GG}.

{\bf MSC(2010):} Primary: 35K55, 35B40; Secondary: 35B44.

{\bf Keywords:} Complete classification; p(x)-Laplace operator; Asymptotic behavior.

\section {Introduction}
It is well known that
parabolic problems with  nonlocal or local terms can describe some
phenomena of real problems such as population dynamics, nuclear science and biological
sciences where the total mass is often conserved, but the growth of
a certain cell or the temperature is known to have some definite
form. For example,  $\hbox{Budd,
Hu~etc.}$ in \cite{CBBDAS,BHHMY} discussed  the blow-up
and global existence of the solution to the following initial and boundary value problem
\begin{equation}\label{equ101-01}
\begin{cases}
u_{t}=\hbox{div}(\varepsilon\nabla u)+|u|^{r-2}u-\frac{1}{|\Omega|}\int_{\Omega}|u|^{r-2}udx,&(x,t)\in Q_{T},\\
\frac{\partial u}{\partial \nu}(x,t)=0,&(x,t)\in\Gamma_{T},\\
u(x,0)=u_{0}(x),&x\in\Omega,
\end{cases}
\end{equation}
where $ Q_{T}=\Omega\times(0,T],~ \Omega\subset\mathbb{R}^N
(N\geq 1)$ is a bounded simply connected domain and
$0<T<\infty$,
$\Gamma_{T}$ denotes the lateral boundary of the cylinder $Q_{T}$, $\nu$ is the unit outward normal on $\partial\Omega$,
$\varepsilon$ is a diffusion coefficient. Later, in 2007, $\hbox{Soufi,~Jazar~and~Monneau}$ in
\cite{AESMJRM} obtained a global existence result and proved that the solution to Problem
\eqref{equ101-01} with a negative initial energy might blow up in finite time.
Further, Gao and Han in \cite{WJGYZH} also proved that the solution
to the problem above might
blow up in finite time for a suitable positive initial energy. However,  Model
(\ref{equ101-01}) may describe some characteristics of incompressible
fluid in a homogenous and isotropic medium, but it can not more
accurately describe the motions of some fluids such as filtration,
elastic mechanics and electromagnetic fluids in non-homogenous and
anisotropic medium. The main reason is that the characteristics of
such medium may vary in dependence on directions and points. Generally speaking, the coefficient
$\varepsilon$ is usually a function which may depend on the unknown
function $u$ or the gradient of $u$ rather than a fixed constant.
Especially, we consider the simple case when $\varepsilon=|\nabla
u|^{p(x)-2}$.  For more details about such problems with variable
exponents, the interested readers may refer to
\cite{MR,LDPHPM,XLF,DVCAF,SNASIS-1,SNASIS-2,YCSLMR,GAKM,LGYC} and the references therein. In this paper,
we consider the following evolution problem with variable exponents
\begin{equation}\label{equ101}
\begin{cases}
u_{t}=\mathrm{div}(|\nabla u|^{p(x)-2}\nabla u)+|u|^{r(x)-2}u-\frac{1}{|\Omega|}\int_{\Omega}|u|^{r(x)-2}udx,&(x,t)\in Q_{T},\\
\frac{\partial u}{\partial \nu}(x,t)=0,&(x,t)\in\Gamma_{T},\\
u(x,0)=u_{0}(x),&x\in\Omega.
\end{cases}
\end{equation}
Throughout the paper, we  assume that the exponents $p(x),~r(x)$ are continuous in $\overline{\Omega}$ with logarithmic module of continuity:
\begin{equation}\label{addequ101}
    \begin{split}
     |p(x)-p(y)|+ |r(x)-r(y)|\leq \omega(|x-y|),\quad \forall~x,~y\in\Omega,~|x-y|<1,
    \end{split}
\end{equation}
where $\omega(\tau)$ is a function with
\[\limsup\limits_{\tau\to 0^+}\omega(\tau)\ln\frac{1}{\tau}=C<+\infty.\]

In case of $p\equiv2$, Ferreira etc.\cite{RFAPMPJD} applied Kaplan's method to establish the non-global
existence and global existence results for Problem (\ref{equ101}). Later, $\hbox{Wu,~Guo and Gao}$ in \cite{XLWBGWJG}
constructed a control functional and applied suitable embedding
theorem to prove that the solution blows up in finite time for a
positive initial energy in the case when $r$ is a function with
respect to space variables. For $p\not\equiv2$, we need to emphasize
that it is not a trivial generalization of the similar problems in the
constant case. In dealing with such problems, we have to encounter some difficulties:

$\bullet$ Due to the gap between the norm and the modular, it is not straightforward to apply  potential well method used in
\cite{HAL} to construct two invariant sets. As we all know, in constant cases, a key question of applying potential well method is to analyze the properties of the function \begin{align*}
R(\lambda)=\lambda-B^{\beta}\lambda^{\beta},~\lambda>0,\beta>1,
\end{align*}
which helps us better explore the mechanism of how the source dominates the diffusion term or the diffusion term dominates the source.
 Therefore, for variable exponents case, the gap between the norm and the modular leads to that $R(\lambda)$ will be expressed as the following form
\begin{align*}
R(\lambda)=\lambda-\max\Big\{(B\lambda)^{\alpha},(B\lambda)^{\beta}\Big\},~\lambda>0,\alpha>\beta>1.
\end{align*}
Obviously, this function is only continuous but not differential for $\lambda>0$. So, in order to bypass this difficulty,
 one has to develop some new methods or put forward some new ideas.

$\bullet$ Another difficulty is  the lack of scaling technique and hence some methods
used in \cite{JXYCHJ,FCLCHX} are not applicable, which results in that one can not analyze the uniqueness and the asymptotic behavior of the solution.

$\bullet$ When we check the proofs step by step in the references, the following relations are used repeatedly
\begin{align*}
&\|\nabla u\|^{r}_{p,\Omega}\equiv\Big(\int_{\Omega}|\nabla
u|^{p}dx\Big)^{\frac{r}{p}};\\
&\hbox{div}(|\nabla(\lambda u)|^{p-2}\nabla(\lambda u))\equiv
\lambda^{p-1}\hbox{div}(|\nabla u|^{p-2}\nabla u).
\end{align*}
These are obviously untrue if $p$ is not a fixed constant.  It is easy to verify that some methods(upper and lower solution techniques) used in \cite{QBZ,BGWJG-3,WJLMXW} are hardly  available.
 In
this paper, we modify the classical potential well method and then combine $\hbox{Galerkin}$ method and Levine's argument with differential inequality techniques to give threshold results for the solutions to exist globally or to blow up in finite time
 when the initial energy functional is subcritical(critical). Subsequently,
 we can give an abstract criterion for the existence of global solutions that tend to $0$ as $t$ tends to $\infty$ or finite-time blow-up solutions for supercritical initial energy.
At the end of this paper, we analyze the properties of some differential inequalities and apply energy estimate method to obtain  two-sided estimates of asymptotic behavior of the solution.

The outline of this paper is: In Section 2, we introduce the
function spaces of $\rm{Orlicz}$ -Sobolev type, and some basic lemmas about the spaces. In Section 3, we give the definition
of the weak solution to the Problem \eqref{equ101} and some preliminary lemmas which are critical to prove our main results.
Section 4 is devoted to the discussion of properties of the solution
for $r(x)\geq p(x)$.  In Section 5, we study long-time asymptotic behavior of solutions for the case $r(x)< p(x)$.

\section {Basic spaces}
 In this section, we introduce some Banach spaces of $\rm{Orlicz-Sobolev}$ type.
 Set
\begin{equation*}
  C_+(\overline{\Omega})=\left\{h\in C(\overline{\Omega}):\min\limits_{x\in\overline{\Omega}}h(x)>1\right\}.
\end{equation*}
For any $h\in C_+(\overline{\Omega}),$ we define\\
\begin{equation*}
h^+=\sup\limits_{x\in\Omega}h(x)\quad\text{and}\quad h^-=\inf\limits_{x\in\Omega}h(x).
\end{equation*}
For any $p\in C_{+}(\overline{\Omega})$, we introduce the variable exponent Lebesgue space,
\begin{align*}
L^{p(x)}(\Omega):=\big\{ u:u~\text{is a measurable real-valued function},\int_{\Omega}|u(x)|^{p(x)}dx<\infty\big\},
\end{align*}
endowed with the Luxemburg norm
\begin{align*}
\|u\|_{p(.)}=\|u\|_{L^{p(x)}(\Omega)}=\inf\{\lambda>0:\int_{\Omega}\Big|\frac{u(x)}{\lambda}\Big|^{p(x)}dx\leq1\}.
\end{align*}
The dual space of $L^{p(x)}(\Omega)$ is $L^{p'(x)}(\Omega)$, where $\frac{1}{p(x)}+\frac{1}{p'(x)}=1$.
\[
\begin{split}
& V(\Omega)=\left\{u|\,u\in  W^{1,1}(\Omega),
\,|\nabla u|^{p(x)}\in
L^{1}(\Omega),\,\int_{\Omega}u(x)dx=0\;\right\},\\
&\|u\|_{V(\Omega)}=\|u\|_{2,~\Omega}
+\|\nabla u\|_{p(.),~\Omega},\\
& W(Q_T)=\left\{u:[0,T]\mapsto V(\Omega)|\,u\in
L^{2}(Q_{T}),~|\nabla u|^{p(x)}\in
L^{1}(0,T;L^{1}(\Omega))\right\},
\\
& \|u\|_{W(Q_T)}=\|\nabla u\|_{p(x),~Q_T},
\end{split}
\]
and denote by $W'(Q_T)$ the dual of $W(Q_T)$. That is,
\begin{align*}
\omega\in W'(Q_{T})\Longleftrightarrow\begin{cases}
\omega=\omega_{0}+\sum\limits_{k=1}^{n}\frac{\partial
\omega_{k}}{\partial x_{k}},~\omega_{0}\in
L^{2}(Q_{T}),~\omega_{k}\in
L^{q(x)}(Q_{T});\\
<\omega,\varphi>\overset{\Delta}=\iint_{Q_{T}}\Big(\omega_{0}\varphi+\sum\limits_{k=1}^{n}\omega_{k}\frac{\partial
\varphi}{\partial x_{k}}\Big)dxdt,~~\forall ~\varphi\in W(Q_{T}).
\end{cases}
\end{align*}
The norm in $W'(Q_{T})$ is equipped with
$\|\omega\|_{W'(Q_{T})}=\sup\{|<\omega,\varphi>|:\varphi\in
W(Q_{T}),~\|\varphi\|_{W(Q_{T})}\break\leq1\}.$
For the sake of simplicity, we first state some results about the
properties of the $\textrm{Luxemburg}$ norm.
\begin{lemma}[\cite{LDPHPM,XLF}]\label{lem101}
The space $L^{p(x)}(\Omega)$ is a separable, uniformly convex Banach~space and its conjugate space is $L^{p'(x)}(\Omega),$ where $\frac{1}{p(x)}+\frac{1}{p'(x)}=1$.

For any $u\in L^{p(x)}(\Omega)$ and $v\in L^{p'(x)}(\Omega),$ we have the following H\"{o}lder's inequality
\begin{equation*}
\begin{split}
\left|\int_{\Omega}uv dx\right|\leq \Big(\frac{1}{p^{-}}+\frac{1}{(p^{-})'}\Big)\|u\|_{p(.)}\|v\|_{p'(.)}\leq2\|u\|_{p(.)}\|v\|_{p'(.)}.
\end{split}
\end{equation*}
\end{lemma}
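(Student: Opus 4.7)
The plan is to prove the four assertions (completeness, separability, uniform convexity, duality with $L^{p'(x)}$, and Hölder's inequality) in that order, starting from the modular $\rho_{p(\cdot)}(u) = \int_\Omega |u(x)|^{p(x)}\,dx$ and exploiting the basic modular-norm relation: $\|u\|_{p(\cdot)} \le 1$ iff $\rho_{p(\cdot)}(u)\le 1$, and more generally $\min\{\|u\|^{p^-},\|u\|^{p^+}\}\le \rho_{p(\cdot)}(u)\le \max\{\|u\|^{p^-},\|u\|^{p^+}\}$. These elementary inequalities, which I would verify first from the definition of the Luxemburg norm, replace the homogeneity identity $\rho(\lambda u)=\lambda^p\rho(u)$ from the constant-exponent case and will be used repeatedly.

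For Hölder's inequality I would begin with the pointwise Young inequality with variable exponent,
\begin{equation*}
|a\,b|\le \frac{|a|^{p(x)}}{p(x)}+\frac{|b|^{p'(x)}}{p'(x)},
\end{equation*}
which holds for all $x\in\Omega$ since $p(x),p'(x)>1$. Normalising so that $\|u\|_{p(\cdot)}=\|v\|_{p'(\cdot)}=1$, I substitute $a=u(x)$, $b=v(x)$, integrate over $\Omega$, and bound the two resulting integrals by $1/p^-$ and $1/(p^-)'$ using the modular-on-the-unit-ball inequality $\rho(u)\le 1$. Rescaling by $\|u\|_{p(\cdot)}$ and $\|v\|_{p'(\cdot)}$ gives the stated constant $1/p^- + 1/(p^-)'\le 2$.

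Completeness is handled by the usual argument: for a Cauchy sequence $\{u_n\}$ I pick a fast subsequence with $\|u_{n_{k+1}}-u_{n_k}\|_{p(\cdot)}\le 2^{-k}$, use the modular bound above to show $\sum_k \rho_{p(\cdot)}(u_{n_{k+1}}-u_{n_k})<\infty$, apply Fatou's lemma and the Beppo Levi theorem to produce a pointwise a.e. limit $u$, and then check $\|u_n-u\|_{p(\cdot)}\to 0$ via the modular again. Separability follows by showing that simple functions with rational values supported on dyadic cubes are dense: an $L^{p(x)}$ function is approximated in modular by truncations, and truncations are approximated by simple functions as in classical $L^p$, after which modular convergence of a bounded sequence upgrades to norm convergence because $p^-\ge 1$. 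For duality, the map $\Phi : L^{p'(x)}\to (L^{p(x)})^*$ defined by $\Phi(v)(u)=\int uv\,dx$ is well-defined and bounded by Hölder; injectivity is immediate, and surjectivity is the content of a Radon–Nikodym/Riesz representation argument that is carried out exactly as in Kováčik–Rákosník, using the fact that $L^{p(x)}$ is order-continuous when $p^+<\infty$.

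The main obstacle, and the step where the variable exponent really matters, is uniform convexity. Here I would mimic Clarkson's inequalities modularly: for $p(x)\ge 2$ one has the pointwise inequality
\begin{equation*}
\Bigl|\tfrac{a+b}{2}\Bigr|^{p(x)}+\Bigl|\tfrac{a-b}{2}\Bigr|^{p(x)}\le \tfrac{1}{2}\bigl(|a|^{p(x)}+|b|^{p(x)}\bigr),
\end{equation*}
integrate it, and translate the resulting modular estimate into a norm estimate using $\min\{\|\cdot\|^{p^-},\|\cdot\|^{p^+}\}\le\rho(\cdot)\le\max\{\|\cdot\|^{p^-},\|\cdot\|^{p^+}\}$ to produce a modulus of convexity depending only on $p^-$ and $p^+$. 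For the range $1<p(x)<2$ I would split $\Omega$ into the two sets $\{p(x)\ge 2\}$ and $\{p(x)<2\}$ and use the corresponding second Clarkson inequality on the latter, then combine. The delicate point, which is where the gap between norm and modular bites, is passing from the modular smallness of $u-v$ to norm smallness: this is why both $p^->1$ and $p^+<\infty$ are used, and why the modulus of convexity cannot be read off directly from the constant-exponent proof. Once uniform convexity is in hand, reflexivity (and hence another proof of duality) is automatic by Milman–Pettis.
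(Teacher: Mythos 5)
The paper offers no proof of this lemma: it is quoted, with attribution, from the standard references \cite{LDPHPM,XLF} on variable exponent Lebesgue spaces, so there is no in-paper argument to measure yours against. Judged on its own, your outline of H\"older's inequality is the standard one and essentially complete: pointwise Young's inequality, normalisation to the unit sphere, the unit-ball bound $\rho_{p(\cdot)}(u)\le 1$, and rescaling. (One notational caveat: the second summand in the constant is really $1/(p')^-$ with $(p')^-=\inf_x p'(x)=(p^+)'$; read literally, the paper's $\frac{1}{p^-}+\frac{1}{(p^-)'}$ equals $1$.) Completeness, separability and duality follow the classical Kov\'a\v{c}ik--R\'akosn\'{\i}k and Fan--Zhao route and your sketches of them are sound. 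One small correction: the upgrade from modular convergence to norm convergence rests on $p^+<\infty$ (via $\|w\|_{p(\cdot)}\le\rho(w)^{1/p^+}$ once $\rho(w)<1$), not on $p^-\ge 1$ as you write; for $p\in C_+(\overline\Omega)$ on a bounded domain this hypothesis is automatic.

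The one genuine gap is in uniform convexity on the set $\{x:p(x)<2\}$. The first Clarkson inequality is a pointwise inequality between the integrands of the modulars and therefore integrates directly, as you exploit where $p(x)\ge 2$; the second Clarkson inequality is not: it carries the outer exponents $1/p'$ and $1/p$ and is an inequality between norms, so ``use the corresponding second Clarkson inequality on the latter, then combine'' does not go through as stated. To repair this you need a pointwise substitute valid for $1<p<2$ --- a Hanner-type inequality, or an inequality bounding $\frac{|a|^{p}+|b|^{p}}{2}-\bigl|\frac{a+b}{2}\bigr|^{p}$ from below by a positive quantity in $|a-b|$ that survives integration --- or else the general criterion for uniform convexity of Musielak--Orlicz spaces (uniform convexity of the generating $\Phi$-function together with the $\Delta_2$-condition, which here is precisely $1<p^-\le p^+<\infty$), as carried out in \cite{LDPHPM}. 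With that substitution the remainder of your plan, including Milman--Pettis for reflexivity, is fine.
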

\begin{lemma}[\cite{LDPHPM,XLF}]\label{PRO102}
Define\\
\begin{equation*}
\begin{split}
\rho(u)=\int_{\Omega}|u|^{p(x)}dx~~~for ~all~ u\in L^{p(x)}(\Omega),
\end{split}\end{equation*}
then\\
\begin{equation*}
\begin{split}
(1)~&\rho(u)>1(~respectively,\rho(u)=1,\rho(u)<1) ~if ~and ~only ~if ~\|u\|_{p(.)}>1\\&(~respectively,~ \|u\|_{p(.)}=1, \|u\|_{p(.)}<1);\\
(2)~&if~\|u\|_{p(.)}>1,~then ~\|u\|_{p(.)}^{p^{-}}\leq\rho(u)\leq \|u\|_{p(.)}^{p^{+}};\\
(3)~&if~\|u\|_{p(.)}<1,~then ~\|u\|_{p(.)}^{p^{+}}\leq\rho(u)\leq\|u\|_{p(.)}^{p^{-}};\\
(4)~&if~ p_{1}(x)\leq p_{2}(x),~\text{then we have a continuous embedding }L^{p_{2}(x)}(\Omega)\hookrightarrow L^{p_{1}(x)}(\Omega).
\end{split}\end{equation*}
\end{lemma}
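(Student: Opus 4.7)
The plan is to reduce all four claims to a single scaling identity: for every nonzero $u\in L^{p(x)}(\Omega)$,
\[
\rho\!\left(\frac{u}{\|u\|_{p(\cdot)}}\right)=1.
\]
Granted this, parts (1)--(3) follow by substituting $\lambda_{0}=\|u\|_{p(\cdot)}$ into the elementary formula $\rho(u)=\int_{\Omega}\lambda_{0}^{p(x)}|u/\lambda_{0}|^{p(x)}\,dx$ and bounding $\lambda_{0}^{p(x)}$ between $\lambda_{0}^{p^{-}}$ and $\lambda_{0}^{p^{+}}$, with the ordering of these two bounds flipping according to whether $\lambda_{0}>1$ or $\lambda_{0}<1$.

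To establish the scaling identity, I would study the auxiliary function $\varphi(\lambda)=\rho(u/\lambda)$ on $(0,\infty)$. Since $1<p^{-}\le p(x)\le p^{+}<\infty$ on $\overline{\Omega}$ and $u$ has finite modular, dominated convergence gives continuity of $\varphi$; pointwise strict decrease of the integrand on $\{u\neq 0\}$ yields strict monotonicity of $\varphi$; and monotone convergence produces the boundary behavior $\varphi(\lambda)\to+\infty$ as $\lambda\to 0^{+}$ and $\varphi(\lambda)\to 0$ as $\lambda\to\infty$. The intermediate value theorem then furnishes a unique $\lambda_{0}$ with $\varphi(\lambda_{0})=1$, which, by comparison with the Luxemburg definition as an infimum, must equal $\|u\|_{p(\cdot)}$.

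With the identity in hand, (1) is immediate from monotonicity of $\varphi$: $\rho(u)=\varphi(1)>1$ iff $1<\|u\|_{p(\cdot)}$, and similarly in the other two cases. For (2) and (3), I would rewrite
\[
1=\int_{\Omega}\frac{|u|^{p(x)}}{\|u\|_{p(\cdot)}^{p(x)}}\,dx,
\]
apply the exponent bounds, and multiply through by $\|u\|_{p(\cdot)}^{p^{\pm}}$ to extract the two-sided sandwich. For (4), the boundedness of $\Omega$ together with $p_{1}(x)\le p_{2}(x)$ yields the pointwise inequality $|u|^{p_{1}(x)}\le 1+|u|^{p_{2}(x)}$, whence $\int_{\Omega}|u|^{p_{1}(x)}\,dx\le |\Omega|+\int_{\Omega}|u|^{p_{2}(x)}\,dx$; translating this modular bound back into norms via parts (2)--(3) produces a continuous embedding with constant depending only on $|\Omega|$, $p_{1}^{\pm}$, $p_{2}^{\pm}$.

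The main technical obstacle is the scaling identity itself: the definition of $\|u\|_{p(\cdot)}$ as an infimum only directly guarantees $\rho(u/\|u\|_{p(\cdot)})\le 1$, and losing the equality would weaken (2) and (3) to one-sided bounds. Closing this gap requires both strict monotonicity and continuity of $\varphi$, which in turn lean on $p^{\pm}$ being finite and $p^{-}>1$ to produce the integrable majorants needed for dominated convergence. Everything else in the lemma cascades from this one identity.
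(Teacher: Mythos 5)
Your argument is correct: the scaling identity $\rho\bigl(u/\|u\|_{p(\cdot)}\bigr)=1$ (valid here because $1<p^{-}\le p^{+}<\infty$), established via continuity, strict monotonicity and the boundary behaviour of $\lambda\mapsto\rho(u/\lambda)$, does yield (1)--(3), and the pointwise bound $|u|^{p_{1}(x)}\le 1+|u|^{p_{2}(x)}$ on a bounded domain gives (4). The paper itself offers no proof of this lemma --- it is quoted from the cited references of Diening et al.\ and Fan--Zhao --- and your proof is essentially the canonical one found there, so there is nothing to compare beyond noting the agreement.
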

\begin{lemma}[\cite{LDPHPM,XLF}]\label{addlemma}
Let $p,~q\in C_+(\overline\Omega)$. Assume that $p^+<N$ and
\[q(x)\leq p^*(x):=\frac{Np(x)}{N-p(x)}\]
 there is a continuous embedding
$V(\Omega)\hookrightarrow L^{q(x)}(\Omega).$ Namely,
\begin{equation}\label{201}
  \|u\|_{q(x)}\leq B\|\nabla u\|_{p(x)},\quad\forall~u\in V(\Omega).
\end{equation}
where $B>0$ is the optimal constant of the embedding inequality above.
\end{lemma}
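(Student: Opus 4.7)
The plan is to deduce this embedding by combining two well-established ingredients from the theory of variable exponent Sobolev spaces: the Sobolev embedding $W^{1,p(\cdot)}(\Omega)\hookrightarrow L^{q(\cdot)}(\Omega)$ and a Poincaré--Wirtinger-type inequality that exploits the zero-mean constraint built into $V(\Omega)$.

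First, I would record that under the logarithmic modulus of continuity assumption \eqref{addequ101} on $p(x)$, together with $p^+<N$ and $q(x)\le p^*(x)$, the classical variable exponent Sobolev embedding yields a constant $C_1>0$ with
\[
\|u\|_{q(\cdot)}\;\le\;C_1\bigl(\|u\|_{p(\cdot)}+\|\nabla u\|_{p(\cdot)}\bigr),\qquad \forall\, u\in W^{1,p(\cdot)}(\Omega).
\]
Since for $u\in V(\Omega)$ we have $|\nabla u|^{p(x)}\in L^1(\Omega)$, Lemma~\ref{PRO102} ensures $u\in W^{1,p(\cdot)}(\Omega)$, so the above inequality applies.

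The second step is the heart of the matter: controlling $\|u\|_{p(\cdot)}$ by $\|\nabla u\|_{p(\cdot)}$ alone, which is only possible thanks to the zero-mean condition $\int_\Omega u\,dx=0$. I would invoke (or briefly establish by contradiction and compactness) the variable exponent Poincaré--Wirtinger inequality: there exists $C_2>0$ such that
\[
\Bigl\|u-\tfrac{1}{|\Omega|}\int_\Omega u\,dx\Bigr\|_{p(\cdot)}\;\le\;C_2\,\|\nabla u\|_{p(\cdot)},\qquad \forall\, u\in W^{1,p(\cdot)}(\Omega).
\]
The compactness argument goes through because $p(\cdot)$ satisfies the log-Hölder condition, so $W^{1,p(\cdot)}(\Omega)$ embeds compactly into $L^{p(\cdot)}(\Omega)$; assuming a normalized sequence violating the inequality leads to a nonzero constant limit with zero mean, a contradiction. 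For $u\in V(\Omega)$ the mean vanishes, giving $\|u\|_{p(\cdot)}\le C_2\|\nabla u\|_{p(\cdot)}$.

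Combining the two displays produces $\|u\|_{q(\cdot)}\le C_1(1+C_2)\|\nabla u\|_{p(\cdot)}$, and setting $B$ to be the infimum of admissible constants yields the stated sharp embedding constant. The step I expect to require most care is the Poincaré--Wirtinger inequality in the variable exponent setting, because the absence of homogeneity (one cannot simply rescale as in the constant-exponent case, a difficulty the authors themselves highlight in the introduction) forces the argument to be run via compactness rather than by direct scaling; one must also verify that the log-Hölder hypothesis \eqref{addequ101} is indeed strong enough to guarantee the compact embedding $W^{1,p(\cdot)}(\Omega)\hookrightarrow\!\hookrightarrow L^{p(\cdot)}(\Omega)$ on the bounded domain $\Omega$, which is standard but deserves an explicit citation.
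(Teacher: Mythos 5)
The paper offers no proof of this lemma---it is quoted directly from \cite{LDPHPM,XLF}---and your two-step argument (variable-exponent Sobolev embedding plus a zero-mean Poincar\'e--Wirtinger inequality obtained by compactness under the log-H\"older condition \eqref{addequ101}) is exactly the standard route taken in those references, so the approach is correct. The one step worth tightening is the passage from $u\in V(\Omega)$ to $u\in W^{1,p(\cdot)}(\Omega)$: Lemma~\ref{PRO102} only converts $|\nabla u|^{p(x)}\in L^{1}(\Omega)$ into $\nabla u\in L^{p(\cdot)}(\Omega)$, and membership of $u$ itself in $L^{p(\cdot)}(\Omega)$ does not follow automatically from $u\in W^{1,1}(\Omega)$; the clean fix is to prove the Sobolev--Poincar\'e estimate $\|u-\bar u\|_{p^{*}(\cdot)}\le C\|\nabla u\|_{p(\cdot)}$ directly from the Riesz-potential representation of $W^{1,1}$ functions (as in \cite{LDPHPM}) and then use $L^{p^{*}(\cdot)}\hookrightarrow L^{q(\cdot)}$ on the bounded domain, noting also that connectedness of $\Omega$ (assumed in the paper) is needed in your compactness contradiction to conclude that a gradient-free limit with zero mean vanishes.
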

\section {Preliminaries}
 We first recall the definition of weak solutions to Problem $\eqref{equ101}$ and some results.
Because of the degeneracy, Problem $\eqref{equ101}$ does not admit
classical solutions in general. So, we introduce weak solutions in
the following sense.
\begin{definition}\label{def301}
A function $u(x,t)\in W(Q_T)\cap L^{\infty}(0,T;L^{2}(\Omega)),~u_{t}\in L^{2}(Q_{T})$
is called a weak solution of Problem $\eqref{equ101}$ if for every
test-function $$\xi\in \mathcal {Z}\equiv\{\eta(x,t):\eta\in
W(Q_T)\cap L^{\infty}(0,T;\,L^{2}(\Omega)),\eta_{t}\in
W'(Q_T)\},$$ and every $t_{1},t_{2}\in[0,T]$ the following identity
holds:
\begin{equation}\label{e201}
\begin{split}
&\int^{t_{2}}_{t_{1}}\int_{\Omega}\Big[u\xi_{t}-|\nabla
u|^{p(x)-2}\nabla u\nabla \xi+F(x,t,u)\xi\Big]dxdt
=\int_{\Omega}u\xi dx\Big|^{t_{2}}_{t_{1}},
\end{split}
\end{equation}
where
$F(x,t,u)=|u|^{r(x)-2}u-\frac{1}{|\Omega|}\int_{\Omega}|u|^{r(x)-2}udx.$
\end{definition}
\begin{definition} Let $u(x,t)$ be a weak solution of \eqref{equ101} with the initial data $u_{0}\in V(\Omega).$ Define  the maximal existence time by $T_{max}=T(u_{0})$.

(1)~If $u(x,t)\in V(\Omega)$ for $0\leqslant t<\infty$, then $T_{max}=+\infty;$

(2)~If $T_{max}<\infty$, then $\lim\limits_{t\rightarrow T_{max}}\|u\|_{V(\Omega)}=+\infty.$ In such case, we call that the solution blows up in finite time.

\end{definition}
In order to prove the existence of weak solutions, we need to find a
linear independent basis $\{\varphi_{k}(x)\}\subset V(\Omega).$
\begin{lemma}[\cite{BGWJG-4}, Lemma 2.3] Suppose that $p(x)$ satisfies the
constrains in (\ref{addequ101}), then
\begin{align*}
&(1)~V(\Omega)~is~ separable~ and ~reflexive;
\\
&(2)~C^{\infty}_{*}(\Omega)=\{u\in
C^{\infty}|\textstyle\int_{\Omega}u(x)dx=0\}~is~dense~in
~V(\Omega).
\end{align*}
\end{lemma}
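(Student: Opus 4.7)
\medskip\noindent\textbf{Proof proposal.}\enspace The plan is to handle the two assertions in turn. For (1), I would exhibit $V(\Omega)$ as a closed subspace of a known separable reflexive Banach space via an isometric embedding. Define
\[
J: V(\Omega) \longrightarrow L^{2}(\Omega)\times \bigl(L^{p(x)}(\Omega)\bigr)^{N}, \qquad Ju = \bigl(u,\,\partial_{1} u,\ldots,\partial_{N} u\bigr),
\]
which is an isometry by the definition of $\|\cdot\|_{V(\Omega)}$. By Lemma~\ref{lem101} each factor is separable and reflexive, hence so is their finite product. It then suffices to verify that the image $J(V(\Omega))$ is closed: given $Ju_{n}\to (v,w_{1},\ldots,w_{N})$ in the product, one has $u_{n}\to v$ in $L^{2}(\Omega)$ and $\partial_{i}u_{n}\to w_{i}$ in $L^{p(x)}(\Omega)$, which embeds continuously into $L^{1}(\Omega)$ by Lemma~\ref{PRO102}(4). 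Passing to distributional derivatives yields $w_{i}=\partial_{i}v$, so $v\in W^{1,1}(\Omega)$ with $|\nabla v|^{p(x)}\in L^{1}(\Omega)$; the mean-zero condition survives because $L^{2}$-convergence implies $L^{1}$-convergence on the bounded domain $\Omega$. Hence $J(V(\Omega))$ is closed, and (1) follows.

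For (2), I would first invoke the standard density theorem for variable-exponent Sobolev spaces: under the log-H\"older modulus \eqref{addequ101}, $C^{\infty}(\overline{\Omega})$ is dense in the space of all $u$ with $u\in L^{2}(\Omega)$ and $|\nabla u|^{p(x)}\in L^{1}(\Omega)$, equipped with the norm $\|u\|_{2,\Omega}+\|\nabla u\|_{p(\cdot),\Omega}$. Given $u\in V(\Omega)$, pick $u_{\varepsilon}\in C^{\infty}(\overline{\Omega})$ with $\|u-u_{\varepsilon}\|_{V(\Omega)}\to 0$. To restore the zero-mean constraint, set
\[
\widetilde{u}_{\varepsilon} \,=\, u_{\varepsilon} - \frac{1}{|\Omega|}\int_{\Omega} u_{\varepsilon}(x)\,dx \;\in\; C^{\infty}_{*}(\Omega).
\]
Because $\int_{\Omega}u\,dx=0$, the subtracted constant equals $\tfrac{1}{|\Omega|}\int_{\Omega}(u_{\varepsilon}-u)\,dx$, which tends to $0$ by the Cauchy--Schwarz inequality applied to $\|u_{\varepsilon}-u\|_{2,\Omega}$. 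Since $\nabla\widetilde{u}_{\varepsilon}=\nabla u_{\varepsilon}$, the $V(\Omega)$-error of $\widetilde u_{\varepsilon}$ is controlled by $\|u-u_{\varepsilon}\|_{V(\Omega)}$ plus this vanishing constant, giving the desired density.

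The main obstacle is the first step of (2): showing that smooth functions are dense in the underlying variable-exponent Sobolev type space. This is precisely where the log-H\"older hypothesis \eqref{addequ101} becomes essential, for without it the Lavrentiev phenomenon can obstruct such density. Rather than reconstruct the argument in detail (it proceeds by convolution with standard mollifiers, a careful modular estimate exploiting the log-H\"older modulus to control oscillations of $p(x)$ over balls of radius $\varepsilon$, and a reflection/extension across $\partial\Omega$), I would cite the now-classical results of Fan--Zhao and Diening--Harjulehto--H\"ast\"o--R\r{u}\v{z}i\v{c}ka. Once that density is granted, both the closure argument in (1) and the mean-correction step in (2) are routine.
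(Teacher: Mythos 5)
You should first note that the paper offers no proof of this statement at all: it is imported verbatim as \cite{BGWJG-4}, Lemma~2.3, so there is nothing internal to compare against, and your reconstruction has to be judged on its own terms against the standard argument one would find in that reference. On those terms it is essentially correct and follows the expected route. Two small points of precision in part (1): the map $J$ is an isomorphism onto its image rather than an isometry, since $\|\nabla u\|_{p(\cdot)}$ is the Luxemburg norm of the Euclidean length $|\nabla u|$ and not the sum $\sum_i\|\partial_i u\|_{p(\cdot)}$ (this costs nothing, as norm equivalence suffices for separability and reflexivity of a closed subspace); and the passage from convergence in $L^{p(x)}(\Omega)$ to convergence in $L^1(\Omega)$ is cleanest via H\"older's inequality (Lemma~\ref{lem101}) with $v\equiv 1$ on the bounded domain, since Lemma~\ref{PRO102}(4) is stated for exponents in $C_+(\overline\Omega)$. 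The mean-correction step in part (2) is exactly right and is the only genuinely new ingredient needed to pass from the unconstrained space to $V(\Omega)$: subtracting $\frac{1}{|\Omega|}\int_\Omega u_\varepsilon\,dx$ leaves the gradient untouched and perturbs the $L^2$ norm by a quantity controlled by $|\Omega|^{1/2}\|u_\varepsilon-u\|_{2}$. The one place where you are leaning on the literature is the density of $C^\infty(\overline\Omega)$ in the ambient space, and you correctly identify this as the step where the log-H\"older condition \eqref{addequ101} is indispensable (to defeat the Lavrentiev phenomenon in the mollification estimate); be aware that this step also silently requires some regularity of $\partial\Omega$ (an extension or segment property), which the present paper's hypothesis of a ``bounded simply connected domain'' does not literally supply and which must be inherited from the standing assumptions of \cite{BGWJG-4}. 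Since the paper itself delegates the entire lemma to that reference, delegating this single sub-step is a proportionate, acceptable level of citation.
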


Set $$ L^{2}_{*}(\Omega)=\{u(x)\in
L^{2}(\Omega)|\textstyle\int_{\Omega}u(x)dx=0\},~~V^+_{*}(\Omega)=\{u(x)|u\in
L_{*}^{2}(\Omega)\cap W^{1,1}(\Omega),~|\nabla u|\in L^{p^{+}}\}.$$
Since $V^+_{*}(\Omega)$ is separable, there exists a span of a
countable set of linearly independent functions
$\{\varphi_{k}(x)\}^{\infty}_{1}\subset V^+_{*}(\Omega).$
\begin{lemma}[\cite{BGWJG-4}, Lemma 2.4]  Suppose that $p(x)$ satisfies
$(\ref{addequ101})$. Then the set
$\{\varphi_{k}(x)\}^{\infty}_{1}$ is dense in $V(\Omega).$
\end{lemma}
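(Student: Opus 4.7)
The plan is to chain together a short inclusion and three density facts. Concretely, I would establish (i) the continuous inclusions $C^\infty_*(\Omega)\subset V^+_*(\Omega)\subset V(\Omega)$; (ii) that the $V(\Omega)$-norm is dominated on $V^+_*(\Omega)$ by the natural stronger norm built from $L^{p^+}$; (iii) density of $\mathrm{span}\{\varphi_k\}$ in $V^+_*(\Omega)$ with respect to that stronger norm, which is automatic once $V^+_*(\Omega)$ is known to be separable; and (iv) density of $V^+_*(\Omega)$ in $V(\Omega)$ in the $V(\Omega)$-norm, which will follow from the previous lemma via $C^\infty_*(\Omega)$.

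The first inclusion is immediate: if $u\in C^\infty_*(\Omega)$, then $|\nabla u|\in L^\infty(\Omega)\subset L^{p^+}(\Omega)$ since $\Omega$ is bounded, and $u\in L^2_*(\Omega)\cap W^{1,1}(\Omega)$ by construction. For the second, Lemma 2.2(4) with $p_1(x)=p(x)$ and $p_2\equiv p^+$ gives the embedding $L^{p^+}(\Omega)\hookrightarrow L^{p(\cdot)}(\Omega)$ with a constant $C$ such that $\|\nabla u\|_{p(\cdot)}\leq C\|\nabla u\|_{p^+}$ for every $u\in V^+_*(\Omega)$; in particular $|\nabla u|^{p(x)}\in L^1(\Omega)$, so $u\in V(\Omega)$, and the $V(\Omega)$-norm is controlled by the stronger norm $\|u\|_2+\|\nabla u\|_{p^+}$, which proves (ii). The space $V^+_*(\Omega)$ is a closed subspace of $W^{1,p^+}(\Omega)$ intersected with the hyperplane $\int_\Omega u\,dx=0$, hence separable; a standard Gram--Schmidt procedure applied to any countable dense subset then yields the linearly independent family $\{\varphi_k\}$ with the stated density, giving (iii).

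To close the argument I would invoke the previous lemma (from [BGWJG-4, Lemma 2.3]) to get density of $C^\infty_*(\Omega)$, and hence of $V^+_*(\Omega)$, in $V(\Omega)$ with respect to $\|\cdot\|_{V(\Omega)}$. Given $u\in V(\Omega)$ and $\varepsilon>0$, pick $w\in V^+_*(\Omega)$ with $\|u-w\|_{V(\Omega)}<\varepsilon/2$, then by (iii) approximate $w$ in the stronger $V^+_*$-norm by a finite linear combination $\sum_{k=1}^{N}c_k\varphi_k$ to within $\varepsilon/(2C)$, which by (ii) yields closeness within $\varepsilon/2$ in the $V(\Omega)$-norm; the triangle inequality completes the proof. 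The main obstacle is the bookkeeping between the two norms: density of $\mathrm{span}\{\varphi_k\}$ is obtained in the stronger constant-exponent topology, and one must verify that it survives the passage to the weaker variable-exponent topology of the ambient space $V(\Omega)$. The embedding inequality from Lemma 2.2(4) is precisely what makes this transfer quantitative and valid.
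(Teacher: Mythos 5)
Your proposal is correct and follows what is essentially the intended argument: the paper itself gives no proof of this lemma (it is quoted verbatim from [BGWJG-4, Lemma 2.4]), and the standard route there is exactly your chain $C^{\infty}_{*}(\Omega)\subset V^{+}_{*}(\Omega)\subset V(\Omega)$, with density of $\mathrm{span}\{\varphi_k\}$ obtained in the stronger constant-exponent norm and transferred to the $V(\Omega)$-topology via the embedding $L^{p^{+}}(\Omega)\hookrightarrow L^{p(\cdot)}(\Omega)$, the log-H\"older condition \eqref{addequ101} entering only through the density of $C^{\infty}_{*}(\Omega)$ in $V(\Omega)$. Your identification of the two-norm bookkeeping as the crux, and its quantitative resolution by Lemma \ref{PRO102}(4), is exactly right.
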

\begin{lemma}[\cite{BGWJG-4}, Lemma 2.5] For $u\in W$ and
$\varepsilon>0$, there exist a sequence $\{c_{k}(t)\},~c_{k}(t)\in
C^{1}[0,T]$ and an integer $n$ such
that$$\|u-\sum\limits_{k=1}^{n}c_{k}(t)\varphi_{k}(x)\|_{W(Q_{T})}<\varepsilon.$$
\end{lemma}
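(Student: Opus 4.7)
The plan is to reduce the claim to the static density result (the previous lemma) by first separating the space and time variables. The overall strategy is a two-step approximation: (i) approximate $u\in W(Q_T)$ in the $W(Q_T)$-norm by a function of tensor-product form $\sum_{j=1}^{N}\chi_j(t)w_j(x)$ with $\chi_j\in C^1[0,T]$ and $w_j\in V(\Omega)$; (ii) approximate each $w_j$ in $V(\Omega)$-norm by a finite linear combination of the basis $\{\varphi_k\}$. Summing these two errors via the triangle inequality for the Luxemburg norm yields the desired estimate.

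For step (i), I would proceed as follows. Fix $\varepsilon>0$ and choose a uniform partition $0=t_0<t_1<\dots<t_N=T$ of $[0,T]$ with mesh $h=T/N$. Because $u\in W(Q_T)$ implies $|\nabla u|\in L^{p(x)}(Q_T)$, a Steklov-type averaging in time, $u_h(x,t)=\tfrac{1}{h}\int_{t}^{t+h}u(x,s)\,ds$, produces a function that is $C^1$ in $t$ with values in $V(\Omega)$ and converges to $u$ in $W(Q_T)$ as $h\to 0$ (the convergence of $|\nabla u_h|\to|\nabla u|$ in the modular can be upgraded to convergence in the Luxemburg norm using Lemma 2.2 on the modular/norm equivalence). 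Then discretizing $u_h$ via a piecewise-linear (hence $C^1$ after a small additional mollification) interpolation in $t$ on the partition yields an approximant of the form $\tilde u(x,t)=\sum_{j=1}^N \chi_j(t)w_j(x)$ with $w_j=u_h(\cdot,t_j)\in V(\Omega)$ and $\chi_j\in C^1[0,T]$, satisfying $\|u-\tilde u\|_{W(Q_T)}<\varepsilon/2$.

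For step (ii), apply the preceding lemma: for each $j=1,\dots,N$, select a finite linear combination $w_j^{(n)}=\sum_{k=1}^{n}a_{jk}\varphi_k(x)$ such that $\|w_j-w_j^{(n)}\|_{V(\Omega)}<\delta$ for any prescribed $\delta>0$; here $n$ can be taken common to all $j$ by choosing the largest of the $N$ indices produced. Set $c_k(t)=\sum_{j=1}^{N}a_{jk}\chi_j(t)\in C^1[0,T]$. Then
\[
\Bigl\|\tilde u-\sum_{k=1}^{n}c_k(t)\varphi_k(x)\Bigr\|_{W(Q_T)}
=\Bigl\|\sum_{j=1}^{N}\chi_j(t)(w_j-w_j^{(n)})\Bigr\|_{W(Q_T)}
\le \sum_{j=1}^{N}\|\chi_j\|_{C^1[0,T]}\,\delta\cdot C(T,p^\pm),
\]
where the constant $C$ comes from relating $\|\nabla\cdot\|_{p(x),Q_T}$ to $\|\nabla\cdot\|_{p(x),\Omega}$ through Lemma 2.2. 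Choosing $\delta$ small enough makes this contribution smaller than $\varepsilon/2$, and the triangle inequality concludes the proof.

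The main technical obstacle is the variable-exponent nature of the norm: unlike the Hilbert setting, one cannot project orthogonally onto $\mathrm{span}\{\varphi_1,\dots,\varphi_n\}$, and one must track modular-to-norm conversions carefully. In particular, passing from a spatial $V(\Omega)$-estimate to the $W(Q_T)$-estimate requires care because $\|\cdot\|_{p(x),Q_T}$ is not simply an $L^p$-type integral of $\|\cdot\|_{p(x),\Omega}$ when $p$ depends on $x$. This is handled by splitting the modular over the partition and invoking parts (2)–(3) of Lemma 2.2 to bound it by a polynomial expression in $\|\nabla(w_j-w_j^{(n)})\|_{p(x),\Omega}$, which can be driven below any target by the static density lemma.
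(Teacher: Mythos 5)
The paper does not prove this lemma at all: it is quoted verbatim from \cite{BGWJG-4} (Lemma 2.5 there), so there is no in-paper argument to compare against. Your reconstruction follows the standard route for such density statements (separate variables by a time-discretization, then invoke the static density of $\mathrm{span}\{\varphi_k\}$ in $V(\Omega)$, then convert the spatial Luxemburg estimate into a $Q_T$-estimate through the modular), and as a strategy it is sound and essentially the argument one finds in the cited source and in Antontsev--Shmarev.

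Two technical points deserve a remark, though neither breaks the proof. First, the Steklov average $u_h(x,t)=h^{-1}\int_t^{t+h}u(x,s)\,ds$ is only absolutely continuous (indeed Lipschitz in the relevant Bochner sense) in $t$, not $C^1$; since your final coefficients $\chi_j$ come from a mollified piecewise-linear interpolation this does not matter, but the convergence $u_h\to u$ in $W(Q_T)$ does rest on continuity of $t$-translations in $L^{p(x)}(Q_T)$, which holds here precisely because $p$ depends only on $x$ (translations in a direction along which the exponent varies are in general unbounded on variable-exponent spaces, so this is worth saying explicitly). Second, the displayed bound $\sum_j\|\chi_j\|_{C^1}\,\delta\cdot C$ is not literally linear in $\delta$: passing from $\|\nabla(w_j-w_j^{(n)})\|_{p(x),\Omega}\le\delta$ to the $Q_T$-norm via Lemma \ref{PRO102} gives a bound of the form $\max\{(T\delta^{p^-})^{1/p^+},(T\delta^{p^-})^{1/p^-}\}$, i.e.\ a positive power of $\delta$, and only $\|\chi_j\|_\infty$ (not the full $C^1$ norm) enters. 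You acknowledge this in your closing paragraph, so the argument closes correctly once $\delta$ is chosen accordingly.
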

\begin{remark}
{\rm In fact, without the condition $(\ref{addequ101})$, Inequality \eqref{201} and the density of $C^{\infty}_{*}(\Omega)$ may not hold. For more related results, the interested reader may refer to \cite[Chapter~8]{LDPHPM} and \cite[Chapters~2~and~4]{DVCAF}.}
\end{remark}
Next, we give some results from \cite{GG}.
\begin{theorem}[\cite{GG}, Theorem 3.1]\label{lem302}
Assume that $p(x),r(x)\in C_{+}(\bar{\Omega})$ satisfy \eqref{addequ101}, and the following conditions hold:
\begin{enumerate}[$(1)$]
  \item $E(0)<E_1,$\quad$\int_\Omega |\nabla u_0|^{p(x)}dx>\alpha_1;$
  \item $\max\Big\{1,\frac{2N}{N+2}\Big\}<p^-<N,$\quad $\max\{p^+,2\}<r^-\leq r(x)\leq r^+\leq \frac{2N+(N+2)p^-}{2N},$
\end{enumerate}
then the solution of Problem \eqref{equ101} blows up in finite time, here
\[\alpha_1=B_1^{\frac{r^+p^+}{p^+-r^-}},\quad B_1=B+1,\quad E_1=\Big(\frac{r^{-}-p^{+}}{p^{+}r^{-}}\Big)\alpha_1.\]
\end{theorem}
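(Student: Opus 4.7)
The strategy is to combine a modified potential-well analysis (to certify an invariant ``unstable'' set of initial data) with a Levine-type concavity argument on a carefully chosen auxiliary function. Introduce the energy and Nehari-type functionals
\[ J(u) := \int_\Omega \frac{|\nabla u|^{p(x)}}{p(x)}\,dx - \int_\Omega \frac{|u|^{r(x)}}{r(x)}\,dx, \qquad I(u) := \int_\Omega |\nabla u|^{p(x)}\,dx - \int_\Omega |u|^{r(x)}\,dx. \]
Testing \eqref{equ101} against $u_t$ (the nonlocal average drops out since $\int_\Omega u(\cdot,t)\,dx \equiv \int_\Omega u_0\,dx = 0$) gives the dissipation identity
\[ \int_0^t \|u_s\|_2^2\,ds + J(u(t)) = E(0), \qquad 0 \leq t < T_{max}. \]

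\textbf{Step 1 (invariance of the unstable set).} Let $\lambda(t) := \int_\Omega |\nabla u(\cdot,t)|^{p(x)}\,dx$. Chain Lemma~\ref{addlemma} (the embedding $V(\Omega) \hookrightarrow L^{r(x)}(\Omega)$, legitimized by $r^+ \leq (2N+(N+2)p^-)/(2N) \leq p^*(x)$ together with $p^- < N$) with Lemma~\ref{PRO102} to pass between norm and modular, yielding
\[ \int_\Omega |u|^{r(x)}\,dx \leq \max\bigl\{(B_1\lambda^{1/p^+})^{r^-},(B_1\lambda^{1/p^+})^{r^+}\bigr\}, \]
and therefore $J(u) \geq g(\lambda) := \lambda/p^+ - (1/r^-)\max\{(B_1\lambda^{1/p^+})^{r^-},(B_1\lambda^{1/p^+})^{r^+}\}$. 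Because $r^- > p^+$, a branch-wise analysis of $g$ shows it attains its unique global maximum $E_1$ at $\lambda = \alpha_1$ and is strictly decreasing on $(\alpha_1,\infty)$. A continuity-in-$t$ argument then forces $\lambda(t) > \alpha_1$ on $[0,T_{max})$: if $\alpha_1$ were reached at a first time $t^*$, then $J(u(t^*)) \geq g(\alpha_1) = E_1$, contradicting $J(u(t^*)) \leq E(0) < E_1$.

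\textbf{Step 2 (Levine concavity).} For constants $T_0, \beta, \sigma > 0$ to be chosen, put
\[ M(t) := \int_0^t \|u(\cdot,s)\|_2^2\,ds + (T_0 - t)\|u_0\|_2^2 + \beta(t+\sigma)^2. \]
Straightforward differentiation gives $M'(t) = 2\int_0^t\!\int_\Omega u u_s\,dx\,ds + 2\beta(t+\sigma)$ and, using the equation, $M''(t) = -2I(u(t)) + 2\beta$. The elementary estimate $-I(u) \geq -r^- J(u) + \frac{r^- - p^+}{p^+}\lambda(t)$ (from $1/r(x) \leq 1/r^-$ and $1/p(x) \geq 1/p^+$), together with Step 1 and the identity $\frac{r^- - p^+}{p^+}\alpha_1 = r^- E_1$, yields
\[ M''(t) \geq 2r^-\bigl(E_1 - E(0)\bigr) + 2r^-\!\int_0^t \|u_s\|_2^2\,ds + 2\beta. \]
Applying Cauchy--Schwarz in the form $(M')^2 \leq 4M\bigl(\int_0^t \|u_s\|_2^2\,ds + \beta\bigr)$ and choosing $\beta \leq r^-(E_1-E(0))/(r^--1)$ delivers the concavity inequality $M(t)M''(t) - \tfrac{r^-}{2}(M'(t))^2 \geq 0$. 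Since $r^-/2 > 1$, tuning $T_0,\sigma$ so that $(M^{1-r^-/2})'(0) < 0$ forces $M$ (and hence $\|u(\cdot,t)\|_{V(\Omega)}$) to blow up in finite time.

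\textbf{Main obstacle.} The delicate step is the potential-well analysis in Step 1: the function $g$ contains $\max\{\cdot,\cdot\}$, so it is merely continuous (not $C^1$) at the crossover $B_1\lambda^{1/p^+} = 1$, and its maximizer cannot be located by simple differentiation. One must analyze the two branches $(B_1\lambda^{1/p^+})^{r^\pm}$ separately and match at the crossover to identify the peak $(E_1,\alpha_1)$; this is precisely the difficulty flagged in the introduction around the function $R(\lambda) = \lambda - \max\{(B\lambda)^\alpha,(B\lambda)^\beta\}$. A secondary bookkeeping issue in Step 2 is that the coefficient $r^-/2 > 1$ required for Levine's concavity is exactly what the assumption $r^- > \max\{p^+,2\}$ supplies, while $r^+ \leq (2N+(N+2)p^-)/(2N)$ is used only to legitimize the Sobolev embedding exploited in Step 1.
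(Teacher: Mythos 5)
First, a point of reference: the paper does not prove Theorem \ref{lem302} at all --- it is imported verbatim from \cite{GG} and used here only as a benchmark (cf.\ the remark showing $E_1<d$, i.e.\ that this criterion is non-optimal) --- so there is no in-paper proof to compare against line by line. That said, your two-step scheme, namely an invariant region $\{\int_\Omega|\nabla u|^{p(x)}dx>\alpha_1\}$ certified via the energy inequality of Lemma \ref{lem301}, followed by Levine's concavity argument, is exactly the architecture the present paper deploys for its own sharper blow-up results (Theorems \ref{th3.2} and \ref{th4.2}, with the optimal level $d$ in place of $E_1$). Your Step 2 is sound: the estimate $-I(u)\ge \frac{r^--p^+}{p^+}\lambda-r^-J(u)$, the identity $\frac{r^--p^+}{p^+}\alpha_1=r^-E_1$, the choice $\beta\le r^-(E_1-E(0))/(r^--1)$, and the use of $r^->2$ to get the exponent $r^-/2>1$ all check out.

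The flaw is in Step 1, where you assert that $g(\lambda)=\lambda/p^+-\frac{1}{r^-}\max\{(B_1\lambda^{1/p^+})^{r^-},(B_1\lambda^{1/p^+})^{r^+}\}$ attains its unique global maximum $E_1$ at $\lambda=\alpha_1$ and is decreasing on $(\alpha_1,\infty)$. Both claims are false unless $r^+=r^-$. Indeed $B_1\alpha_1^{1/p^+}=B_1^{(p^+-r^-+r^+)/(p^+-r^-)}<1$, so near $\alpha_1$ the active branch is $g_A(\lambda)=\lambda/p^+-\frac{B_1^{r^-}}{r^-}\lambda^{r^-/p^+}$, whose critical point $B_1^{r^-p^+/(p^+-r^-)}$ lies at or to the right of $\alpha_1=B_1^{r^+p^+/(p^+-r^-)}$; hence $g$ is still increasing at $\alpha_1$, and $g(\alpha_1)=E_1$ reduces to $(r^+-r^-)(r^--p^+)=0$. (Also, your formula for $g$ rests on $\|\nabla u\|_{p(.)}\le\lambda^{1/p^+}$, which Lemma \ref{PRO102} gives only for $\lambda\le1$, so $g$ is not even a valid lower bound for $J$ on all of $(0,\infty)$ and the ``global maximum'' is not a meaningful object.) Fortunately the argument never needs the maximizer: the contradiction at a first crossing time $t^*$ with $\lambda(t^*)=\alpha_1$ requires only $J(u(t^*))\ge g(\alpha_1)\ge E_1>E(0)$, and the inequality $g(\alpha_1)\ge E_1$ does hold because it is equivalent to $(r^--r^+)(p^+-r^-)\ge0$, a product of two nonpositive factors; moreover $\alpha_1<1$, so the modular--norm step is legitimate precisely at the crossing point. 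So the proof survives, but you should delete the extremal claim and replace it by a direct verification of the single inequality $g(\alpha_1)\ge E_1$ (and, ideally, run the continuity-in-$t$ argument at the level of the Galerkin approximations, as the paper does in its own Theorem \ref{th3.2}, since $t\mapsto\int_\Omega|\nabla u|^{p(x)}dx$ is not obviously continuous for a weak solution).
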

The following lemma plays a key role in our proofs.
\begin{lemma}\label{lem301}
If $u(x,t)\in W(Q_{T}),u_{t}\in L^{2}(Q_{T})$ is a weak solution to Problem $\eqref{equ101},$ then the energy functional $E(t)$ satisfies
\begin{equation}\label{3}
E(t)+\int_{0}^{t}\int_\Omega{u_s^2}dxds\leq E(0),~t\geq0,
\end{equation}
where
\begin{equation*}\label{2}
 E(t):=E(u(x,t))=\int_{\Omega}\frac{1}{p(x)}|\nabla u(x,t)|^{p(x)}dx-\int_{\Omega}\frac{1}{r(x)}|u(x,t)|^{r(x)}dx.
\end{equation*}
\end{lemma}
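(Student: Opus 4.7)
The plan is to derive the energy identity at the level of the Galerkin approximation and then pass to the limit, picking up at most a one-sided inequality due to lower semicontinuity of the dissipation term.

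First I would exploit the basis $\{\varphi_k\}\subset V^+_*(\Omega)$ from Lemma 2.4 and consider the finite-dimensional approximation $u_m(x,t)=\sum_{k=1}^{m}c^m_k(t)\varphi_k(x)$ solving the standard Galerkin ODE system associated with \eqref{equ101}. Because each $\varphi_k$ has zero mean, so does $u_m'(t):=\partial_t u_m$; this is crucial because it makes $u_m'$ an admissible test function in the approximate weak formulation and, simultaneously, forces the nonlocal correction $\tfrac{1}{|\Omega|}\int_\Omega |u_m|^{r(x)-2}u_m\,dx$ to drop out, since $\int_\Omega u_m'\,dx=0$.

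Next, selecting $\xi=u_m'$ in the Galerkin identity, I would compute term by term. The parabolic term gives $\int_\Omega |u_m'|^2\,dx$. Using that $\tfrac{d}{dt}\bigl(\tfrac{1}{p(x)}|\nabla u_m|^{p(x)}\bigr)=|\nabla u_m|^{p(x)-2}\nabla u_m\cdot\nabla u_m'$ and $\tfrac{d}{dt}\bigl(\tfrac{1}{r(x)}|u_m|^{r(x)}\bigr)=|u_m|^{r(x)-2}u_m\,u_m'$ pointwise (here $p(x),r(x)$ are time-independent, which makes the chain rule trivial), the diffusion and source terms become exact time derivatives. Integrating over $(0,t)$ therefore yields the energy equality for the approximation:
\begin{equation*}
E(u_m(t))+\int_0^t\!\!\int_\Omega |u_m'|^2\,dx\,ds=E(u_m(0)).
\end{equation*}

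Finally I would pass $m\to\infty$. Standard a priori bounds from this identity give $u_m'$ bounded in $L^2(Q_T)$, $u_m$ bounded in $W(Q_T)\cap L^\infty(0,T;L^2(\Omega))$, and (by Lemma 2.3 and Aubin--Lions) strong convergence $u_m\to u$ in $L^{r(x)}(Q_T)$ together with $u_m(t)\to u(t)$ in $L^2(\Omega)$ for a.e. $t$. Weak lower semicontinuity of the convex modular $\rho_{p(\cdot)}(\nabla\cdot)$ under weak convergence in $L^{p(x)}$ (a consequence of Lemma 2.2 and Mazur's lemma) and of $\|\cdot\|_{L^2(Q_T)}^2$ under weak convergence in $L^2(Q_T)$ yield
\begin{equation*}
E(u(t))\le\liminf_{m\to\infty}E(u_m(t)),\qquad \int_0^t\!\!\int_\Omega u_s^2\,dx\,ds\le\liminf_{m\to\infty}\int_0^t\!\!\int_\Omega|u_m'|^2\,dx\,ds,
\end{equation*}
while the continuity of the source modular together with the strong $L^{r(x)}$-convergence controls the $r(x)$-contribution in $E(u_m(0))\to E(u(0))$ (for smooth initial data $u_0$ from which $u_m(0)$ is constructed). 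Combining these inequalities with the approximate identity gives \eqref{3}.

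The main obstacle is the lower semicontinuity step for the variable exponent modular, because the usual arguments rely on having a fixed exponent; I would handle this by applying Mazur's lemma to produce convex combinations that converge strongly in $L^{p(x)}$ (hence modularly, by Lemma 2.2) and then using Fatou's lemma in the pointwise inequality $|\nabla u|^{p(x)}\le\liminf|\nabla v_k|^{p(x)}$ after extracting an a.e. convergent subsequence. This semicontinuity is also precisely the reason \eqref{3} is an inequality and not an equality.
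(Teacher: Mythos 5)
Your proposal is correct and follows essentially the same route as the paper: a Galerkin approximation tested with $\partial_t u_m$ (equivalently, multiplying the ODE system by $(c^m_k)'$ and summing), the exact energy identity at the approximate level, and then passage to the limit where weak lower semicontinuity of the gradient modular and of the dissipation term (the paper invokes Fatou's lemma here) turns the identity into the inequality \eqref{3}. Your treatment of the semicontinuity of the variable-exponent modular via Mazur's lemma is in fact a more explicit justification of the step the paper leaves implicit.
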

\begin{proof} For the convenience of the reader, we give a simple proof.
A weak solution $u(x,t)$ to Problem (\ref{equ101-01}) is a limit function
of the sequence of $\hbox{Galerkin's}$ approximation
$$u^{(m)}(x,t)=\sum\limits_{k=1}^{m}C_{k}^{(m)}(t)\varphi_{k}(x),~~\varphi_{k}(x)\in V(\Omega),~C_{k}^{(m)}(t)\in C^{1}[0,T),$$
where the coefficients $C_{k}^{m}(t)$ satisfy that
\begin{equation*}\int_{\Omega}[\frac{\partial u^{m}_{k}}{\partial t}\varphi_{k}(x)+|\nabla
u_{k}^{m}|^{p(x)-2}\nabla u_{k}^{m}\nabla\varphi_{k}(x)+F(x,t,u_{k}^{m})\varphi_{k}(x)]dx=0,~~\varphi_{k}(x)\in
V(\Omega).
\end{equation*}
 Noticing that $\varphi_{k}(x)\in
V(\Omega),$ it is not hard to verify for any fixed $m$
\begin{align}\label{addd103}
\int_{\Omega}u^{(m)}(x,t)dx=\int_{\Omega}\sum\limits_{k=1}^{m}C_{k}^{(m)}(t)\varphi_{k}(x)dx=\sum\limits_{k=1}^{m}C_{k}^{(m)}(t)\int_{\Omega}\varphi_{k}(x)dx=0.
\end{align}
Furthermore,
following the lines of the proof of Lemmas 2.6-2.7 in
\cite{BGWJG-4} as well as the help of Lemmas \ref{PRO102} and \ref{addlemma},   we know that there exists a positive constant
$C=C( |\Omega|,|u_{0}|_{V(\Omega)},p^{\pm},r,\break N)$ such that
\begin{align}\label{Ine302}
&~\|u^{(m)}\|_{W(Q_{T})}+\|u^{(m)}\|_{L^{\infty}(0,T;L^{2}(\Omega))}+\|u_{t}^{(m)}\|_{W'(Q_{T})}\leqslant
C.
\end{align}
From\cite{SNASIS-1}, we may get the following inclusions:
\begin{align*}
&u^{(m)}\in W(Q_T)\subseteq
L^{p^-}(0,T;W_{*}^{1,p^{-}}(\Omega)),\\
&u^{(m)}_{t}\in W'(Q_T)\subseteq
L^{q^{-}}(0,T; (V^{+}_{*}(\Omega))'),\\
&W_{*}^{1,p^-}(\Omega)\subset L_{*}^{r(x)}(\Omega)\subset
(V^{+}_{*}(\Omega))',
\end{align*}
where $W_{*}^{1,p^-}(\Omega)=\{u\in
W^{1,p^-}(\Omega),\textstyle\int_{\Omega}u(x)dx=0\}$.

From \cite[Corollary 6]{JSIMON}, it follows that the sequence
$\{u^{(m)}\}$ contains a subsequence strongly convergence in
$L^{r^{-}}(0,T;L^{r(x)}(\Omega))$ with some $1<r(x)<p^{-*}:Np^-/(N-p^-).$ This subsequence contains
a subsequence which converges to $u(x,t)$ a.e. in $Q_{T}$ (see,
 e.g.~\cite[Th 2.8.1]{AKJOSF}. These conclusions together with the uniform estimates in $m$ allow
one to extract from the sequence $u^{(m)}$ a subsequence (for the
sake of simplicity, we assume that it merely coincides with the
whole of the sequence) such that
\begin{equation}\label{addd301}
\begin{cases}
\begin{split}
&u^{(m)}\rightharpoonup u ~~weakly~in~~
W(Q_{T})~and~strongly~in~L^{r^{-}}(0,T;L^{r(x)}(\Omega));\\
&u^{(m)}\rightarrow u~~~~ a,e.~in~~~Q_{T};\\
&|\nabla u^{(m)}|^{p(x)-2}D_{i}u^{(m)}\rightharpoonup A_{i}(x,t)
~~~weakly~in~~~ L^{(p^{-})'}(0,T;L^{p'(x)}(\Omega)),
\end{split}
\end{cases}
\end{equation}
for some functions $u\in W(Q_{T}),~A_{i}(x,t)\in L^{(p^{-})'}(0,T;L^{p'(x)}(\Omega)).$
Similar as the proof of Theorem 2.1 in
\cite{BGWJG-4}, we have $A_{i}(x,t)=|\nabla u|^{p(x)-2}\frac{\partial u}{\partial x_{i}},~~a.e. (x,t)\in Q_{T}.$
In addition, (\ref{addd103}) and (\ref{addd301}) as well as $\textrm{Lebesgue}$ Dominated Convergence theorem  yield
\begin{align*}\int_{\Omega}u(x,t)dx=0,~~t>0.\end{align*}

Multiplying (\ref{e202}) by $(C_{k}^{(m)}(t))'$, summing over
$k=1,2,\cdots,m,$ we arrive at the relation
\begin{equation}\label{addd302}
\begin{split}
~\|u_{t}^{(m)}\|_{L^{2}(\Omega)}&+\frac{d}{dt}E(u^{m}(x,t))=0.
\end{split}\end{equation}
From (\ref{addd301}),(\ref{addd302})and Fatou's
lemma, it follows that
\begin{align*}
E(t_{2})+\int_{t_{1}}^{t_{2}}\int_{\Omega}|u_{s}|^{2}dxds\leq
E(t_{1}),~t_{2}\geqslant t_{1}\geq0.\end{align*}
\end{proof}
In this section, in addition to \eqref{addequ101}, we assume that the following condition holds:
\[(\mathbf{H})~\max\Big\{1,\frac{2N}{N+2}\Big\}<p^-<N;\quad \max\{p^+,2\}<r^-\leq r(x)\leq r^+\leq \frac{Np^-}{N-p^-}.\]

For $u\in  V(\Omega)$, define two functionals as follows:
\begin{equation*}\label{304}
J(u)=\int_{\Omega}\frac{1}{p(x)}|\nabla u|^{p(x)}dx-\int_{\Omega}\frac{1}{r(x)}|u|^{r(x)}dx;
\end{equation*}
\begin{equation*}
\begin{split}
I(u)=\int_{\Omega}|\nabla u|^{p(x)}dx-\int_{\Omega}|u|^{r(x)}dx,
\end{split}
\end{equation*}
and define the Nehari's manifold
\begin{equation*}\label{equ104}
\begin{split}
\mathcal{N}&=\{u\in  V(\Omega)\backslash\{0\}|I(u)=0\};\\
\mathcal{N}_{+}&=\{u\in  V(\Omega)|I(u)>0\}\cup\{0\};\\
\mathcal{N}_{-}&=\{u\in  V(\Omega)|I(u)<0\}.\\
\end{split}
\end{equation*}

Since $r^+\leq Np^-/(N-p^-)$, the functionals $J$ and $I$ are well defined and continuous on $u\in  V(\Omega)$. Define the depth of the potential well by
\[
d=\inf_{u\in \mathcal{N}}J(u).
\]

\begin{lemma}\label{LEM302}
The depth of the potential well $d$ is positive.
\end{lemma}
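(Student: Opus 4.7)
The plan is to show $d > 0$ by first getting a clean lower bound of $J$ in terms of the $p(x)$-Dirichlet modular, and then extracting a uniform positive lower bound on that modular for all $u \in \mathcal{N}$ via the Sobolev embedding from Lemma~\ref{addlemma} together with the norm-modular relations from Lemma~\ref{PRO102}.

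First, for $u \in \mathcal{N}$ the constraint $I(u) = 0$ gives $\int_\Omega |\nabla u|^{p(x)} dx = \int_\Omega |u|^{r(x)} dx$. Using $p(x) \leq p^+$ and $r(x) \geq r^-$, together with assumption $(\mathbf{H})$ which ensures $p^+ < r^-$, I would estimate
\[
J(u) \geq \frac{1}{p^+}\int_\Omega |\nabla u|^{p(x)} dx - \frac{1}{r^-}\int_\Omega |u|^{r(x)} dx = \left(\frac{1}{p^+} - \frac{1}{r^-}\right)\int_\Omega |\nabla u|^{p(x)} dx.
\]
Thus it suffices to bound the Dirichlet modular from below uniformly on $\mathcal{N}$.

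Next, write $t := \|\nabla u\|_{p(x)}$ (which is positive since $u \neq 0$). By Lemma~\ref{addlemma}, $\|u\|_{r(x)} \leq B\, t$. Applying the norm-modular relations of Lemma~\ref{PRO102} to both sides of $I(u)=0$ yields
\[
\min\{t^{p^-}, t^{p^+}\} \leq \int_\Omega |\nabla u|^{p(x)} dx = \int_\Omega |u|^{r(x)} dx \leq \max\{\|u\|_{r(x)}^{r^-}, \|u\|_{r(x)}^{r^+}\} \leq \max\{B^{r^-}, B^{r^+}\}\max\{t^{r^-}, t^{r^+}\}.
\]
I then split into cases. If $t \geq 1$, nothing more is needed, we have $t \geq 1$. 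If $0 < t < 1$, the left side equals $t^{p^+}$ and the right side is at most $\max\{B^{r^-},B^{r^+}\}\, t^{r^-}$, so rearranging and using $r^- - p^+ > 0$ (from $(\mathbf{H})$) gives
\[
t \geq \bigl(\max\{B^{r^-}, B^{r^+}\}\bigr)^{-1/(r^- - p^+)}.
\]
Either way, $t \geq C_0 := \min\{1, (\max\{B^{r^-},B^{r^+}\})^{-1/(r^--p^+)}\} > 0$.

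Converting this norm bound back into a modular bound via Lemma~\ref{PRO102}(2)--(3) yields $\int_\Omega |\nabla u|^{p(x)} dx \geq \min\{C_0^{p^-}, C_0^{p^+}\} > 0$, so combining with the first step gives
\[
J(u) \geq \left(\frac{1}{p^+} - \frac{1}{r^-}\right)\min\{C_0^{p^-}, C_0^{p^+}\} > 0,
\]
uniformly for $u \in \mathcal{N}$, and taking the infimum gives $d > 0$. The only genuinely delicate point, and the one the authors flag explicitly in the introduction, is the need to juggle both cases $t \geq 1$ and $t < 1$ when passing between the Luxemburg norm and the modular in both directions; this is what forces the two-sided $\min/\max$ gymnastics above, but once that bookkeeping is done the argument is straightforward from the strict inequality $p^+ < r^-$ in hypothesis $(\mathbf{H})$.
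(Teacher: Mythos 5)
Your proof is correct and follows essentially the same route as the paper: the bound $J(u)\geq\bigl(\tfrac{1}{p^{+}}-\tfrac{1}{r^{-}}\bigr)\int_{\Omega}|\nabla u|^{p(x)}dx$ on $\mathcal{N}$, followed by a uniform positive lower bound on the Dirichlet modular obtained from $I(u)=0$, the embedding of Lemma~\ref{addlemma}, and the norm--modular case analysis of Lemma~\ref{PRO102} (you split on whether $\|\nabla u\|_{p(x)}\gtrless 1$ where the paper splits on whether the modular is $\gtrless 1$, which is immaterial). The only piece you omit is the paper's preliminary verification that $\mathcal{N}\neq\emptyset$ (via an intermediate-value argument applied to $\lambda\mapsto I(\lambda\varphi)$ using $r^{-}>p^{+}$), which is what guarantees that $d$ is a genuine finite number rather than $\inf\emptyset$ and is used later in the paper; it is worth adding, but it does not affect the positivity argument itself.
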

\begin{proof}
First, we prove the set $\mathcal{N}$ is not empty. Choose a nonzero function $\varphi(x)$ satisfying $\int_{\Omega}|\nabla\varphi|^{p(x)}dx>0,~\int_{\Omega}\varphi(x)dx=0.$
 For any $\lambda>0$, set $k(\lambda)=I(\lambda\varphi),$
 then it is not hard to verify that $k(\lambda)$ is continuous with respect to $\lambda$. Additionally, a simple computation shows that
\begin{equation*}
\begin{cases}
&k(\lambda)\geq \lambda^{p^{+}}\int_{\Omega}|\nabla\varphi|^{p(x)}dx-\lambda^{r^{-}}\int_{\Omega}|\varphi|^{r(x)}dx,~~\lambda<1;\\
&k(\lambda)\leq \lambda^{p^{+}}\int_{\Omega}|\nabla\varphi|^{p(x)}dx-\lambda^{r^{-}}\int_{\Omega}|\varphi|^{r(x)}dx,~~\lambda\geq 1.
\end{cases}
\end{equation*}
Again, by means of the condition $r^{-}>p^{+}$, one knows that there exist two positive constants $\lambda_0,~\lambda_1$ satisfying $0<\lambda_{0}\leq 1<\lambda_{1}<\infty$( which maybe depend on $\varphi$) such that
\begin{equation}\label{Add1001}
\begin{cases}&\lambda_{1}^{p^{+}}\int_{\Omega}|\nabla\varphi|^{p(x)}dx-\lambda_{1}^{r^{-}}\int_{\Omega}|\varphi|^{r(x)}dx<0,\\
&\lambda_{0}^{p^{+}}\int_{\Omega}|\nabla\varphi|^{p(x)}dx-\lambda_{0}^{r^{-}}\int_{\Omega}|\varphi|^{r(x)}dx>0,
\end{cases}\end{equation}
which imply that
\begin{align*}
k(\lambda_{0})>0,~k(\lambda_{1})<0.
\end{align*}
Moreover, by combining the above inequality with intermediate value theorem, we know that there exists a positive constant $\lambda^{*}>0$ ( which maybe depend on $\varphi$) such that $k(\lambda^{*})=0$ which shows that the set $\mathcal{N}$ is not empty.

 Next, we will complete the proof of this lemma.
Fix $u\in \mathcal{N}$. It follows from Lemma \ref{PRO102}, (\ref{201}) and the definition of $\mathcal{N}$ that
\begin{equation}\label{equ306}
\begin{split}
\int_{\Omega}|\nabla u|^{p(x)}dx&=\int_{\Omega}|u|^{r(x)}dx \leq \max\Big\{\|u\|_{r(x)}^{r^{+}} ,\|u\|_{r(x)}^{r^{-}}\Big\}\\
&\leq \max\Big\{B^{r^{+}}\|\nabla u\|_{p(x)}^{r^{+}},B^{r^{-}}\|\nabla u\|_{p(x)}^{r^{-}}\Big\}\\
& \leq \max\Bigg\{B^{r^{+}}\max\Big\{\big(\int_{\Omega}|\nabla u|^{p(x)}dx\big)^{\frac{r^{+}}{p^+}},\big(\int_{\Omega}|\nabla u|^{p(x)}dx\big)^{\frac{r^{+}}{p^-}}\Big\},\\
&~~~~B^{r^{-}}\max\Big\{\big(\int_{\Omega}|\nabla u|^{p(x)}dx\big)^{\frac{r^{-}}{p^+}},\big(\int_{\Omega}|\nabla u|^{p(x)}dx\big)^{\frac{r^{-}}{p^-}}\Big\}  \Bigg\}.
\end{split}\end{equation}
If~$\int_\Omega|\nabla u|^{p(x)}dx\geq 1$, then $\eqref{equ306}$~implies $
    \int_\Omega|\nabla u|^{p(x)}dx\geq\min\{B^{\frac{r^{-}p^-}{p^--r^-}},B^{\frac{r^{+}p^-}{p^--r^+}}\}.$\\
If~$\int_\Omega|\nabla u|^{p(x)}dx<1$, then $\eqref{equ306}$~implies
$\int_\Omega|\nabla u|^{p(x)}dx\geq \min\{B^{\frac{r^{-}p^+}{p^+-r^-}},B^{\frac{r^{+}p^+}{p^+-r^+}}\}.$

Therefore, we get
\begin{equation}\label{equ307}
 \int_\Omega |\nabla u|^{p(x)}dx\geq \min\Big\{B^{\frac{r^+p^-}{p^--r^+}},B^{\frac{r^-p^+}{p^+-r^-}}\Big\}>0.
\end{equation}

Noticing that $r^{-}>p^{+}$ and $u\in \mathcal{N}$, we have
\begin{equation*}
\begin{split}
J(u)&=\int_{\Omega}\frac{1}{p(x)}|\nabla u|^{p(x)}dx-\int_{\Omega}\frac{1}{r(x)}|u|^{r(x)}dx \geq \frac{1}{p^{+}}\int_{\Omega}|\nabla u|^{p(x)}dx-\frac{1}{r^{-}}\int_{\Omega}|u|^{r(x)}dx\\
&\geq\frac{r^{-}-p^{+}}{p^{+}r^{-}}\int_{\Omega}|\nabla u|^{p(x)}dx+\frac{1}{r^{-}}I(u)\geq \frac{r^{-}-p^{+}}{p^{+}r^{-}}\min\Big\{B^{\frac{r^+p^-}{p^--r^+}},B^{\frac{r^-p^+}{p^+-r^-}}\Big\},
\end{split}\end{equation*}
which indicates $d\geq \frac{r^{-}-p^{+}}{p^{+}r^{-}}\min\Big\{B^{\frac{r^+p^-}{p^--r^+}},B^{\frac{r^-p^+}{p^+-r^-}}\Big\}>0$ from the definition of $d$.
\end{proof}

For convenience, we introduce some notations.
For any $s>d$, define
\begin{align*}
J^{s}=\left\{u\in  V(\Omega)\Big|J(u)\leq s\right\}.
\end{align*}
By the definition of $J(u),~\mathcal{N},~J^{s}~\text{and}~d$, we see that
\begin{align*}
\mathcal{N}_{s}\triangleq \mathcal{N}\cap J^{s}=\left\{u\in N\Big|J(u)\leq s\right\}\neq \emptyset,\quad\forall~s>d.
\end{align*}
Also define
\begin{align}\label{4}
\lambda _{s}=\inf\{~\|u\|_{2}|u\in \mathcal{N}_{s}~\},\quad\Lambda_{s}=\sup\{~\|u\|_{2}|u\in \mathcal{N}_{s}\}.
\end{align}
It is clear that $\lambda_{s}$($\Lambda_{s}$) is nonincreasing(nondecreasing) with respect to $s$.
In addition, we introduce the following two sets:
\begin{align*}
\mathcal{U}&=\{u_{0}\in  V(\Omega)\Big|~\text{the solution}~u=u(t) ~\text{of}~\eqref{equ101}~\text{blows up in finite time}\};\\
\mathcal{G}&=\{u_{0}\in  V(\Omega)\Big|~\text{the solution}~u=u(t)~\text{of}~\eqref{equ101}~\text{tends to $0$ in}~ V(\Omega)~\text{as}~t\rightarrow \infty\}.
\end{align*}

To analyze the behavior of solutions to Problem \eqref{equ101}, we need the
 following properties of the functionals and sets defined above.
\begin{lemma}\label{lem304}
{\rm(1)~$0$ is away from both $\mathcal{N}$ and $\mathcal{N}_{-}$ , $i.e.$ $dist(0,\mathcal{N})>0,$ and $dist(0,\mathcal{N}_{-})>0$.

~~~~~~~~~~~~~~~~~~~(2) For any $s>d$, the set $J^{s}\cap \mathcal{N}_{+}$ is bounded in $V(\Omega)$.}
\end{lemma}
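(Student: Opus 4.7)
The plan is to handle the two assertions separately, each time reducing the claim to the modular/norm dichotomy of Lemma \ref{PRO102} together with the embedding $V(\Omega) \hookrightarrow L^{r(x)}(\Omega)$ of Lemma \ref{addlemma}, exactly as already exploited in the proof of Lemma \ref{LEM302}.

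For (1), if $u \in \mathcal{N}$ then $I(u)=0$ gives the identity $\int_{\Omega}|\nabla u|^{p(x)}\,dx = \int_{\Omega}|u|^{r(x)}\,dx$, and this was the sole ingredient behind the chain \eqref{equ306}--\eqref{equ307}; reusing it verbatim yields
\begin{equation*}
\int_{\Omega}|\nabla u|^{p(x)}\,dx \;\geq\; \min\!\Big\{B^{\tfrac{r^+p^-}{p^--r^+}},\,B^{\tfrac{r^-p^+}{p^+-r^-}}\Big\} \;>\; 0.
\end{equation*}
By Lemma \ref{PRO102} this modular bound forces a strictly positive lower bound on $\|\nabla u\|_{p(\cdot)}$, hence on $\|u\|_{V(\Omega)}$, so $\mathrm{dist}(0,\mathcal{N})>0$. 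For $u\in\mathcal{N}_{-}$ the only change is that $I(u)<0$ replaces the identity with the strict inequality $\int_{\Omega}|\nabla u|^{p(x)}\,dx < \int_{\Omega}|u|^{r(x)}\,dx$; feeding this into the same chain of Luxemburg/Sobolev estimates gives the identical positive lower bound, and hence $\mathrm{dist}(0,\mathcal{N}_{-})>0$ as well.

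For (2), fix $u\in J^{s}\cap\mathcal{N}_{+}$. From $I(u)\geq 0$ one has $\int_{\Omega}|u|^{r(x)}\,dx \leq \int_{\Omega}|\nabla u|^{p(x)}\,dx$, so
\begin{equation*}
s \;\geq\; J(u) \;\geq\; \frac{1}{p^{+}}\int_{\Omega}|\nabla u|^{p(x)}\,dx - \frac{1}{r^{-}}\int_{\Omega}|u|^{r(x)}\,dx \;\geq\; \frac{r^{-}-p^{+}}{p^{+}r^{-}}\int_{\Omega}|\nabla u|^{p(x)}\,dx,
\end{equation*}
giving the uniform bound $\int_{\Omega}|\nabla u|^{p(x)}\,dx \leq \frac{p^{+}r^{-}}{r^{-}-p^{+}}s$. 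Converting back to the Luxemburg norm via Lemma \ref{PRO102} (distinguishing the modular $\leq 1$ and modular $>1$ cases produces the exponents $1/p^{+}$ and $1/p^{-}$ respectively, but both give a uniform bound on $\|\nabla u\|_{p(\cdot)}$) and then invoking the embedding $V(\Omega)\hookrightarrow L^{2}(\Omega)$, which is legitimate because the standing hypothesis $p^{-}>\tfrac{2N}{N+2}$ forces $2\leq p^{*}(x)$, yields a uniform bound on $\|u\|_{V(\Omega)}=\|u\|_{2}+\|\nabla u\|_{p(\cdot)}$ over $J^{s}\cap\mathcal{N}_{+}$.

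Neither part presents a genuine obstacle; the whole proof is essentially packaged into the estimates already developed for Lemma \ref{LEM302}. The only care needed is the bookkeeping between modular and Luxemburg norm, which is why the lower bound in (1) and the upper bound in (2) each have to be stated via a $\min$ (respectively $\max$) over two powers of the relevant quantity, reflecting the non-homogeneity introduced by the variable exponent $p(x)$.
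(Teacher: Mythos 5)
Your proof is correct and follows essentially the same route as the paper's: both parts reduce to the modular estimates already set up for Lemma \ref{LEM302}, namely the lower bound \eqref{equ307} (valid verbatim when $I(u)\leq 0$) for part (1) and the inequality $s\geq J(u)>\frac{r^{-}-p^{+}}{p^{+}r^{-}}\int_{\Omega}|\nabla u|^{p(x)}dx$ for part (2), followed by the usual modular-to-Luxemburg-norm bookkeeping. The only cosmetic difference is that for $dist(0,\mathcal{N})$ the paper derives its positive lower bound from the definition of $d$ (obtaining $\int_{\Omega}|\nabla u|^{p(x)}dx\geq\frac{p^{-}r^{+}}{r^{+}-p^{-}}d$) rather than reusing \eqref{equ307} as you do; both bounds are strictly positive, so the conclusions coincide.
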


\begin{proof}
(1)~For any $u\in \mathcal{N}$, by the definition of $d$  we obtain
\begin{equation}\label{Key ine}
\begin{split}
d \leq J(u)&=\int_{\Omega}\frac{1}{p(x)}|\nabla u|^{p(x)}dx-\int_{\Omega}\frac{1}{r(x)}|u|^{r(x)}dx\\
&\leq \frac{1}{p^{-}}\int_{\Omega}|\nabla u|^{p(x)}dx-\frac{1}{r^{+}}\int_{\Omega}|u|^{r(x)}dx\\
& \leq \Big(\frac{1}{p^{-}} - \frac{1}{r^{+}}\Big)\int_{\Omega}|\nabla u|^{p(x)}dx+\frac{1}{r^{+}}I(u)\\
&= \Big(\frac{1}{p^{-}} - \frac{1}{r^{+}}\Big)\int_{\Omega}|\nabla u|^{p(x)}dx,
\end{split}\end{equation}
which implies that $$dist(0,\mathcal{N})=
\inf\limits_{u\in \mathcal{N}} \|u\|_{V(\Omega)}\geq\inf\limits_{u\in \mathcal{N}} \|\nabla u\|_{p(x),\Omega}\geq \min\Big\{\Big(\frac{p^{-}r^{+}}{r^{+}-p^{-}}d\Big)^{\frac{1}{p^+}},\Big(\frac{p^{-}r^{+}}{r^{+}-p^{-}}d\Big)^{\frac{1}{p^-}}\Big\}.$$

On the other hand, for any $u\in \mathcal{N}_{-}$, we have $I(u)<0$. Following the line of the proof of (\ref{equ307}), one may get
 \begin{align}\label{Key ine01}
 \int_{\Omega}|\nabla u|^{p(x)}dx\geq \min\Big\{B^{\frac{r^+p^-}{p^--r^+}},B^{\frac{r^-p^+}{p^+-r^-}}\Big\}>0,
 \end{align}
 which shows that
 $$dist(0,\mathcal{N}_{-})=\inf\limits_{u\in \mathcal{N}_{-}} \int_{\Omega}|\nabla u|^{p(x)}dx\geq \min\Big\{B^{\frac{r^+p^-}{p^--r^+}},B^{\frac{r^-p^+}{p^+-r^-}}\Big\}>0.$$
This completes the proof of the first part of this lemma.

(2)~For any $u\in J^{s}\cap \mathcal{N}_{+}$, we have
\begin{equation*}
\begin{split}
s\geq J(u)&=\int_{\Omega}\frac{1}{p(x)}|\nabla u|^{p(x)}dx-\int_{\Omega}\frac{1}{r(x)}|u|^{r(x)}dx\\
&\geq \frac{1}{p^{+}}\int_{\Omega}|\nabla u|^{p(x)}dx-\frac{1}{r^{-}}\int_{\Omega}|u|^{r(x)}dx\\
&= \Big(\frac{1}{p^{+}} - \frac{1}{r^{-}}\Big)\int_{\Omega}|\nabla u|^{p(x)}dx+\frac{1}{r^{-}}I(u)\\
&>\Big(\frac{1}{p^{+}} - \frac{1}{r^{-}}\Big)\int_{\Omega}|\nabla u|^{p(x)}dx.
\end{split}\end{equation*}
Therefore we have
\begin{align}\label{Key ine02}
\int_{\Omega}|\nabla u|^{p(x)}dx<\frac{sp^{+}r^{-}}{r^{-}-p^{+}}.
 \end{align}
The proof is complete.
\end{proof}

Next, we discuss the properties of $\lambda_{\alpha}$ and $\Lambda_{\alpha}.$
\begin{lemma}\label{LEM304}
For any $s>d,~ \lambda_{s}$ and $\Lambda_{s}$ defined in $(\ref{4})$ satisfy $0< \lambda_{s}<\Lambda_{s}<+\infty$.
\end{lemma}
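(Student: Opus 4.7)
The plan is to treat the three claims ($\Lambda_s<\infty$, $\lambda_s>0$, and $\lambda_s<\Lambda_s$) separately. The first two are direct modular-calculus arguments in the spirit of Lemma~\ref{lem304}; the third is the genuinely new ingredient and requires a perturbation inside the Nehari manifold.

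For $\Lambda_s<\infty$, I would take $u\in\mathcal{N}_s$ and combine $I(u)=0$ with $J(u)\le s$ to obtain
\[s\ge J(u)=J(u)-\tfrac{1}{r^-}I(u)\ge\Bigl(\tfrac{1}{p^+}-\tfrac{1}{r^-}\Bigr)\int_\Omega|\nabla u|^{p(x)}\,dx,\]
so $\int_\Omega|\nabla u|^{p(x)}\,dx\le\tfrac{sp^+r^-}{r^- -p^+}$. Lemma~\ref{PRO102}(2)(3) then bounds $\|\nabla u\|_{p(\cdot)}$ by a constant depending only on $s$, and the continuous chain $V(\Omega)\hookrightarrow L^{r(x)}(\Omega)\hookrightarrow L^2(\Omega)$ (using Lemma~\ref{addlemma} together with $r^->2$ and $|\Omega|<\infty$) produces $\|u\|_2\le C(s)$.

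For $\lambda_s>0$, I would argue by contradiction: assume a sequence $\{u_n\}\subset\mathcal{N}_s$ with $\|u_n\|_2\to 0$. The previous paragraph bounds $\{u_n\}$ in $V(\Omega)$, so the compact Sobolev embedding of $V(\Omega)$ into $L^{r(x)}(\Omega)$ available under $(\mathbf{H})$ extracts a subsequence converging in $L^{r(x)}(\Omega)$; its limit must coincide with the $L^2$-limit and therefore equal $0$. Consequently $\int_\Omega|u_n|^{r(x)}\,dx\to 0$, and since $I(u_n)=0$ also $\int_\Omega|\nabla u_n|^{p(x)}\,dx\to 0$, contradicting the uniform lower bound \eqref{Key ine01} of Lemma~\ref{lem304}(1).

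The strict separation $\lambda_s<\Lambda_s$ is the main obstacle, because neither of the preceding estimates excludes the degenerate possibility that $\|\cdot\|_2$ is constant on $\mathcal{N}_s$. My strategy is to perturb along the Nehari manifold. Pick $u_0\in\mathcal{N}$ with $J(u_0)<s$ (available since $d<s$) and $w\in C^\infty_*(\Omega)$. As in the proof of Lemma~\ref{LEM302}, $\lambda\mapsto I(\lambda(u_0+\varepsilon w))$ admits a unique positive zero $\mu^*(\varepsilon)$ with $\mu^*(0)=1$; because $\partial_\lambda I(\lambda u_0)|_{\lambda=1}=\int_\Omega p(x)|\nabla u_0|^{p(x)}dx-\int_\Omega r(x)|u_0|^{r(x)}dx\le(p^+-r^-)\int_\Omega|\nabla u_0|^{p(x)}dx<0$ (using $I(u_0)=0$ and $r^->p^+$), the implicit function theorem makes $\mu^*$ a $C^1$ function of $\varepsilon$. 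Setting $u_\varepsilon=\mu^*(\varepsilon)(u_0+\varepsilon w)\in\mathcal{N}$, continuity of $J$ keeps $u_\varepsilon\in\mathcal{N}_s$ for $|\varepsilon|$ small. Differentiating $\|u_\varepsilon\|_2^2=(\mu^*(\varepsilon))^2\|u_0+\varepsilon w\|_2^2$ at $\varepsilon=0$ yields $2\mu^{*\prime}(0)\|u_0\|_2^2+2(u_0,w)_{L^2}$, a linear functional of $w$ that can be arranged to be nonzero by suitable choice of $w\in C^\infty_*(\Omega)$. Hence $\|u_\varepsilon\|_2\ne\|u_0\|_2$ for small nonzero $\varepsilon$, exhibiting two elements of $\mathcal{N}_s$ with distinct $L^2$-norms and completing the proof.
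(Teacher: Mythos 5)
Your two outer bounds are essentially right, but your route to $\lambda_s>0$ differs from the paper's and leans on a tool the paper never establishes. The paper uses no compactness: it combines the Gagliardo--Nirenberg interpolation inequality \eqref{1} with the identity $\int_\Omega|\nabla u|^{p(x)}dx=\int_\Omega|u|^{r(x)}dx$ on $\mathcal{N}$ and the modular estimates of Lemma \ref{PRO102} to derive the explicit quantitative lower bound \eqref{addequation} for $\|u\|_2$ in terms of $\int_\Omega|\nabla u|^{p(x)}dx$, which is then bounded below by \eqref{Key ine01} and above by \eqref{Key ine02}; the same computation is recycled in Lemma \ref{LEM305} to make the bound independent of $s$. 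Your contradiction argument instead invokes a \emph{compact} embedding $V(\Omega)\hookrightarrow L^{r(x)}(\Omega)$, which is nowhere stated in the paper (Lemma \ref{addlemma} gives only continuity) and which genuinely fails when $r^+$ equals the critical value $Np^-/(N-p^-)$ permitted by $(\mathbf{H})$. The repair is exactly the paper's estimate run in contradiction mode: feed $\|u_n\|_2\to0$ and the uniform bound on $\|\nabla u_n\|_{p(\cdot)}$ into \eqref{1} to get $\int_\Omega|u_n|^{r(x)}dx\to0$, hence $\int_\Omega|\nabla u_n|^{p(x)}dx\to0$, contradicting \eqref{Key ine01}. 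Your treatment of $\Lambda_s<+\infty$ coincides with the paper's.

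On the strict inequality $\lambda_s<\Lambda_s$: the paper's proof never addresses it --- it establishes only $\lambda_s>0$ and $\Lambda_s<+\infty$, and only those outer bounds are used later (e.g.\ in Theorem \ref{THM401}) --- so your perturbation argument is an attempt at something the paper silently omits. It is a reasonable strategy, and your verification that $\partial_\lambda I(\lambda u_0)|_{\lambda=1}\le(p^+-r^-)\int_\Omega|\nabla u_0|^{p(x)}dx<0$ is correct (indeed this shows the positive zero of $\lambda\mapsto I(\lambda u)$ is unique, which Lemma \ref{LEM302} does not actually prove). But the final step has a gap: you assert that the linear functional $w\mapsto 2\mu^{*\prime}(0)\|u_0\|_2^2+2(u_0,w)_{L^2}$ can be made nonzero, without proof. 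By the implicit function theorem $\mu^{*\prime}(0)=-\langle I'(u_0),w\rangle\big/\partial_\lambda I(\lambda u_0)|_{\lambda=1}$, and taking $w=u_0$ gives $\langle I'(u_0),u_0\rangle=\partial_\lambda I(\lambda u_0)|_{\lambda=1}$, hence $\mu^{*\prime}(0)=-1$ and the functional vanishes identically in that (tangential) direction. Its identical vanishing on all of $C^\infty_*(\Omega)$ is equivalent to $u_0$ weakly solving a specific Euler--Lagrange-type equation $a\,I'(u_0)+u_0=\mathrm{const}$, which you have not excluded; to close the argument you must either rule this out or vary the base point $u_0$ in $\mathcal{N}\cap J^{s}$ as well.
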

\begin{proof}
For $u\in V$, $\hbox{Gagliardo-Nirenberg}$ inequality and the embedding
$L^{r^+}(\Omega)\hookrightarrow L^{r(x)}(\Omega)\hookrightarrow L^{r^{-}}(\Omega)$ indicate that there exists a positive constant $\tilde{C}$ such that\\
\begin{equation}\label{1}
  \|u\|_{r(x)}\leq (|\Omega|+1)\|u\|_{r^+}\leq \tilde{C}\|\nabla u\|^\theta_{p(x)}\|u\|_2^{1-\theta},
\end{equation}
where $\theta$ is determined by $\left(\frac{1}{2}+\frac{1}{N}-\frac{1}{p^-}\right)\theta=\frac{1}{2}-\frac{1}{r^+}$. It is easy to check that $\theta\in (0,1)$ due to condition $(\mathbf{H})$.

Obviously, due to $u\in\mathcal{N}$ we have $\int_\Omega|\nabla u|^{p(x)}dx=\int_\Omega|u|^{r(x)}dx$ and utilize Lemma \ref{PRO102} to obtain
\begin{equation}\label{10}
\begin{split}
\|u\|_{r(x)}&\geq \min\Big\{\Big(\int_{\Omega}| u|^{r(x)}dx\Big)^{\frac{1}{r^{+}}},
\Big(\int_{\Omega}|u|^{r(x)}dx\Big)^{\frac{1}{r^{-}}}\Big\}
\\&\geq \min\Big\{\Big(\int_{\Omega}|\nabla u|^{p(x)}dx\Big)^{\frac{1}{r^{+}}},
\Big(\int_{\Omega}|\nabla u|^{p(x)}dx\Big)^{\frac{1}{r^{-}}}\Big\}.
\end{split}
\end{equation}
Then, we combine (\ref{1}) with (\ref{10}) to get
\begin{equation}\label{addequation}
\begin{split}
\|u\|_{2}\geq\min
\Bigg\{\Big[\frac{1}{\tilde{C}}\Big(\int_{\Omega}|\nabla u|^{p(x)}dx\Big)^{\frac{1}{r^{+}}
-\frac{\theta}{p^{-}}}\Big]^{\frac{1}{1-\theta}},\Big[\frac{1}{\tilde{C}}\Big(\int_{\Omega}|\nabla u|^{p(x)}dx\Big)^{\frac{1}{r^{-}}
-\frac{\theta}{p^{+}}}\Big]^{\frac{1}{1-\theta}}\Bigg\}.
\end{split}\end{equation}
So, the right-hand side of the above inequality remains bounded away from $0$ no
matter what the sign of $\frac{1}{r^+}-\frac{\theta}{p^-}$ and $\frac{1}{r^-}-\frac{\theta}{p^+}$ are due to Inequalities $(\ref{Key ine01})$ and $(\ref{Key ine02})$. Therefore, $\lambda_{\alpha}>0.$

On the other hand, the fact that $\Lambda_{s}<\infty$ just follows from (\ref{Key ine02}) and the Sobolev embedding inequality (\ref{201}).
This completes the proof of this lemma.
\end{proof}
In fact, we can get a lower bound of $\lambda_s$ that is independent of $s$.
\begin{lemma}\label{LEM305}
If
$
2\leq r^+\leq(1+\frac{2}{N})p^{-},
$
then for any $s>d,$ $\lambda_s\geq M>0$, where
$M$ is given in \eqref{addequation1}.
 \end{lemma}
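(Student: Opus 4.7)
The plan is to sharpen the estimate $(\ref{addequation})$ from the proof of Lemma~\ref{LEM304} so that the lower bound on $\|u\|_2$ becomes independent of $u\in\mathcal{N}$, hence uniform in $s>d$. The obstacle in Lemma~\ref{LEM304} was that the exponents $\frac{1}{r^+}-\frac{\theta}{p^-}$ and $\frac{1}{r^-}-\frac{\theta}{p^+}$ could have either sign, so one could only conclude positivity pointwise, not uniformly. Under the extra hypothesis $2\leq r^{+}\leq (1+\tfrac{2}{N})p^{-}$, I want to show that both exponents are in fact \emph{nonnegative}, which together with the uniform lower bound on $\rho(u)=\int_\Omega|\nabla u|^{p(x)}dx$ from $(\ref{Key ine01})$ forces a uniform positive lower bound on $\|u\|_2$.

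The key calculation is for the Gagliardo--Nirenberg exponent $\theta = \frac{1/2-1/r^+}{1/2+1/N-1/p^-}$. First, $r^{+}\geq 2$ guarantees $\theta\geq 0$, and the condition $p^{-}>\frac{2N}{N+2}$ in $(\mathbf{H})$ guarantees the denominator is positive, so $\theta\in[0,1)$ is well defined. Second, I will verify that the borderline case $r^{+}=(1+\tfrac{2}{N})p^{-}$ produces $\theta=\tfrac{N}{N+2}$ and exactly $\frac{1}{r^{+}}-\frac{\theta}{p^{-}}=0$. Then, since the map $r^{+}\mapsto \tfrac{1}{r^{+}}-\tfrac{\theta}{p^{-}}$ is monotone decreasing in $r^{+}$ (an easy derivative check), the assumption $r^{+}\leq (1+\tfrac{2}{N})p^{-}$ yields
\[
\tfrac{1}{r^{+}}-\tfrac{\theta}{p^{-}}\geq 0.
\]
Using $r^{-}\leq r^{+}$ and $p^{-}\leq p^{+}$, the second exponent satisfies $\tfrac{1}{r^{-}}-\tfrac{\theta}{p^{+}}\geq \tfrac{1}{r^{+}}-\tfrac{\theta}{p^{-}}\geq 0$ automatically.

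With both exponents nonnegative, I revisit $(\ref{addequation})$. For any $u\in\mathcal{N}$, $(\ref{equ307})$ yields $\rho(u)\geq c_{0}:=\min\bigl\{B^{\frac{r^{+}p^{-}}{p^{-}-r^{+}}},\, B^{\frac{r^{-}p^{+}}{p^{+}-r^{-}}}\bigr\}>0$, and since $x\mapsto x^{a}$ is nondecreasing for $a\geq 0$ and $x>0$, each of $\rho(u)^{\frac{1}{r^{+}}-\frac{\theta}{p^{-}}}$ and $\rho(u)^{\frac{1}{r^{-}}-\frac{\theta}{p^{+}}}$ is bounded below by $c_{0}$ raised to the corresponding exponent. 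Substituting into $(\ref{addequation})$ therefore gives
\[
\|u\|_{2}\;\geq\; M\;:=\;\min\!\left\{\Bigl[\tfrac{1}{\tilde{C}}\,c_{0}^{\frac{1}{r^{+}}-\frac{\theta}{p^{-}}}\Bigr]^{\frac{1}{1-\theta}},\;\Bigl[\tfrac{1}{\tilde{C}}\,c_{0}^{\frac{1}{r^{-}}-\frac{\theta}{p^{+}}}\Bigr]^{\frac{1}{1-\theta}}\right\},
\]
uniformly in $u\in\mathcal{N}$. Taking infimum over $u\in\mathcal{N}_{s}\subset\mathcal{N}$ gives $\lambda_{s}\geq M>0$ for every $s>d$, with $M$ independent of $s$.

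The main technical obstacle I expect is the bookkeeping of the sign of $\tfrac{1}{r^{+}}-\tfrac{\theta}{p^{-}}$: one must use the critical-exponent identification at $r^{+}=(1+\tfrac{2}{N})p^{-}$ together with monotonicity in $r^{+}$, rather than trying to estimate the two quantities directly. Everything else is a clean application of the estimates already established in Lemmas~\ref{LEM302} and~\ref{LEM304}.
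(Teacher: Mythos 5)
Your proof is correct and follows essentially the same route as the paper: the hypothesis $2\le r^+\le(1+\tfrac{2}{N})p^-$ is used precisely to make the exponents $\tfrac{1}{r^+}-\tfrac{\theta}{p^-}$ and $\tfrac{1}{r^-}-\tfrac{\theta}{p^+}$ nonnegative, so that an $s$-independent lower bound on $\int_\Omega|\nabla u|^{p(x)}dx$ over $\mathcal{N}$ propagates through \eqref{addequation}. The only difference is that you substitute the bound $c_0$ from \eqref{equ307}, whereas the paper substitutes $\int_\Omega|\nabla u|^{p(x)}dx\ge \frac{p^-r^+}{r^+-p^-}d$ from \eqref{Key ine}, so your constant differs from the $M$ displayed in \eqref{addequation1}; both constants are positive and independent of $s$, so the conclusion is unaffected.
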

 \begin{proof}
Obviously, \eqref{addequation} still holds. \eqref{Key ine} implies
\[\int_{\Omega}|\nabla u|^{p(x)}dx\geq \frac{p^{-}r^{+}}{r^{+}-p^{-}}d.\]
It is obvious by combining $2\leq r^+\leq(1+\frac{2}{N})p^{-}$ with the above inequality and \eqref{addequation} that
\begin{equation}\label{addequation1}
\begin{split}
\|u\|_{2}\geq\min
\Bigg\{\Big[\frac{1}{\tilde{C}}\Big(\frac{p^{-}r^{+}}{r^{+}-p^{-}}d\Big)^{\frac{1}{r^{+}}
-\frac{\theta}{p^{-}}}\Big]^{\frac{1}{1-\theta}},\Big[\frac{1}{\tilde{C}}\Big(\frac{p^{-}r^{+}}{r^{+}-p^{-}}d\Big)^{\frac{1}{r^{-}}
-\frac{\theta}{p^{+}}}\Big]^{\frac{1}{1-\theta}}\Bigg\}:=M.
\end{split}\end{equation}
 \end{proof}
 The forthcoming lemma tells us that two sets $\mathcal{N}_{+}$ and $\mathcal{N}_{-}$ are invariant.
 \begin{lemma}\label{lem305}
Let  ${\mathbf{(H)}}$ hold and assume that $u(x, t)$ is a weak solution of Problem \eqref{equ101} in $\Omega\times[0,T) $ with $J(u_0)<d$.

\ \ $\mathrm{(i)}$ If $I(u_0)>0$, then $u(x, t)\in \mathcal{N}_{+}$ for $0<t<T$.

 \ \ $\mathrm{(ii)}$ If $I(u_0)<0$, then $u(x, t)\in \mathcal{N}_{-}$ for $0<t<T$.
\end{lemma}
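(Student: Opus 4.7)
The strategy is the standard potential-well invariance argument: use the energy barrier $d$ together with a continuity/contradiction argument. The starting observation is Lemma \ref{lem301}, which gives
\[
J(u(t)) \;=\; E(t) \;\leq\; E(0) \;=\; J(u_0) \;<\; d \qquad \text{for every } t\in[0,T),
\]
so the solution's energy can never reach the depth of the well. Combined with the definition $d=\inf_{\mathcal{N}} J$, this says $u(t)\notin \mathcal{N}$ for all $t\in[0,T)$. The remaining task is to prevent the trajectory from ``jumping over'' $\mathcal{N}$, which is achieved by continuity in $t$.

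For part (i), I would argue by contradiction. Set $t^{*}=\inf\{t\in(0,T):u(t)\notin \mathcal{N}_{+}\}$ and assume $t^{*}<T$. For $t<t^{*}$ we have $u(t)\in \mathcal{N}_{+}$, and since $0\in \mathcal{N}_{+}$ by definition, if $u(t^{*})=0$ then $u(t^{*})\in \mathcal{N}_{+}$, contradicting the choice of $t^{*}$; hence $u(t^{*})\neq 0$. Using the continuity of $t\mapsto I(u(t))$ along the flow (discussed below) and the fact that $I(u(t))>0$ for $t<t^{*}$, I get $I(u(t^{*}))\geq 0$, and because $u(t^{*})\notin \mathcal{N}_{+}$ forces $I(u(t^{*}))\leq 0$, we conclude $I(u(t^{*}))=0$. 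Therefore $u(t^{*})\in \mathcal{N}$, so $J(u(t^{*}))\geq d$, which contradicts the energy bound displayed above.

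For part (ii), the argument is symmetric: set $t^{*}=\inf\{t\in(0,T):u(t)\notin \mathcal{N}_{-}\}$ and use continuity of $I$ along the flow to derive $I(u(t^{*}))=0$. The new subtlety is that $0\in \mathcal{N}_{+}$, so I must separately exclude $u(t^{*})=0$. This is exactly the role of Lemma \ref{lem304}(1): it supplies a quantitative lower bound $\|u\|_{V(\Omega)}\geq c_{0}>0$ for every $u\in \mathcal{N}_{-}$, and passing to the limit along any sequence $t_{n}\uparrow t^{*}$ with $u(t_{n})\in \mathcal{N}_{-}$ forces $\|u(t^{*})\|_{V(\Omega)}\geq c_{0}$. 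Hence $u(t^{*})\in \mathcal{N}$ and the same contradiction $d\leq J(u(t^{*}))<d$ closes the argument.

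The main technical obstacle is justifying the continuity of $t\mapsto I(u(t))$ in the variable-exponent setting, where the gap between norm and modular (Lemma \ref{PRO102}) forbids the direct scaling tricks available when $p$ is constant. Since $u_{t}\in L^{2}(Q_{T})$, one has strong continuity of $t\mapsto u(t)$ in $L^{2}(\Omega)$, and condition $(\mathbf{H})$ together with Lemma \ref{addlemma} makes the embedding $V(\Omega)\hookrightarrow L^{r(x)}(\Omega)$ well-behaved, so $\int_{\Omega}|u(t)|^{r(x)}\,dx$ depends continuously on $t$. For the gradient modular $\int_{\Omega}|\nabla u(t)|^{p(x)}\,dx$, one combines the continuity (in $t$) of $J(u(t))$ furnished by Lemma \ref{lem301} with weak lower semicontinuity in $V(\Omega)$ (which is uniformly convex by Lemma \ref{lem101}) to upgrade weak to strong convergence along sequences $t_{n}\to t^{*}$, thereby yielding continuity of both integrals and hence of $I(u(\cdot))$.
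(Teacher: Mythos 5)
Your proposal follows essentially the same route as the paper's proof: a first-exit-time/continuity contradiction built on the energy inequality $J(u(t))\le J(u_0)<d$ from Lemma \ref{lem301}, the definition $d=\inf_{\mathcal N}J$, and the quantitative lower bound of Lemma \ref{lem304}(1) (equivalently \eqref{Key ine01}) to rule out $u(t^{*})=0$ so that $u(t^{*})\in\mathcal N$. The one place you deviate is in trying to derive continuity of $t\mapsto\int_{\Omega}|\nabla u(t)|^{p(x)}dx$ from Lemma \ref{lem301} (which only provides an inequality, not an identity), but the paper's own proof makes the same implicit continuity assumption when it writes $\int_{\Omega}|\nabla u(\cdot,t_0)|^{p(x)}dx=\lim_{t\to t_0}\int_{\Omega}|\nabla u(\cdot,t)|^{p(x)}dx$, so this does not distinguish your argument from theirs.
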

\begin{proof}
$\mathrm{(i)}$ For $J(u_0)<d$, $I(u_0)>0$, from the definition of $\mathcal{N}_{+}$, we know $u_0\in \mathcal{N}_{+}$.
Next we will prove $u(t)\in \mathcal{N}_{+}$ for $0<t<T$. Otherwise, there exists
a $t_0\in(0, T)$ such that $u(t_0)\in\partial \mathcal{N}_{+}$.
Noticing that $0$ is an interior point of $\mathcal{N}_{+}$, we thus have
\begin{eqnarray*}
I(u(t_0))=0, \ \|\nabla u(t_0)\|_{p(.)}\neq0, \ \ \text{or} \ \ J(u(t_0))=d.
\end{eqnarray*}
As $J(u(t_0))<d$ by \eqref{3}, we thus have $I(u(t_0))=0$ and $\|\nabla u(t_0)\|_{p(.)}\neq0$, which, by the definition of $d$, implies that
$J(u(t_0))\geq d$, a contradiction to \eqref{3}.

\ \ \ \ $\mathrm{(ii)}$ Similarly, we have $u_0\in \mathcal{N}_{-}$. Next we will show that $u(t)\in \mathcal{N}_{-}$ for $0<t<T$. If not, there exists
a $t_0\in(0, T)$ such that $u(t_0)\in\partial \mathcal{N}_{-}$, namely
\begin{eqnarray*}
I(u(t_0))=0, \ \ \text{or} \ \ J(u(t_0))=d.
\end{eqnarray*}
By \eqref{3}, we can see that $J(u(t_0))<d$, then $I(u(t_0))=0$. We assume that $t_0$ is the first time such that
$I(u(t))=0$, then $I(u(t))<0$ for $0\leq t<t_0$. By (\ref{Key ine01}) we have $$\int_{\Omega}|\nabla u(.,t)|^{p(x)}dx>\alpha_1,~~
{\rm for}~~0\leq t<t_0.$$ Hence
\begin{align*}
\int_{\Omega}|\nabla u(.,t_{0})|^{p(x)}dx=\lim\limits_{t\rightarrow t_{0}}\int_{\Omega}|\nabla u(.,t)|^{p(x)}dx\geq\min\Big\{B^{\frac{r^+p^-}{p^--r^+}},B^{\frac{r^+p^+}{p^+-r^-}}\Big\}>0,
\end{align*} which together with $I(u(t_0))=0$ implies that $u(t_0)\in \mathcal{N}$.
By the definition of $d$, we again obtain $J(u(t_0))\geq d$, a contradiction to \eqref{3}. The proof is complete.
\end{proof}
\section {The case $r(x)\geq p(x)$}
In this section, we mainly discuss the behavior of the solution to Problem $\eqref{equ101}$. First of all, we modify the classical potential well method which first is introduced by Sattinger in \cite{DHS} and developed by Levine \cite{Lev74,Lev74-2}, and then apply energy estimate method to give threshold results for the solutions to exist globally or to blow up in finite time
 in the subcritical(critical) case $J(u_{0})<d$($J(u_{0})=d$). Subsequently,
together with Lemma \ref{lem304} and Lemma \ref{LEM304}, we can obtain an abstract criterion for the existence of global solutions that tend to $0$ as $t$ tends to $\infty$ or finite-time blow-up solutions in terms of $\lambda_{s}$ and $\Lambda_{s}$ for supercritical initial energy, $i.e.$ ~$J(u_0)>d$.
Our main results are as follows:
\begin{theorem}\label{th3.1}(Global existence for $J(u_0)<d$.)
Let  ${\mathbf{(H)}}$ hold and $u_0\in V(\Omega)$. If
$J(u_0)<d$ and $I(u_0)>0$, then Problem \eqref{equ101} admits a global weak solution
$u\in L^{\infty}(0,\infty:V(\Omega)),~u_{t}\in L^{2}(\Omega\times(0,\infty)),u(t)\in \mathcal{N}_{+}$ for $0\leq t<\infty$.
Moreover,
there exist two constants $K_{0},K_{1}>0$ defined in \eqref{3.35} such that
\begin{eqnarray*}
\int_{\Omega}|\nabla u|^{p(x)}dx\leq\begin{cases}\frac{p^{+}r^{-}}{r^{-}-p^{+}}\Big(\frac{K^{2}_{1}p^{+}}{K_{1}+K_{0}(p^{+}-2)t}\Big)^{\frac{p^{+}}{p^{+}-2}},~~&p^{+}>2;\\
\frac{p^{+}r^{-}}{r^{-}-p^{+}}K_{1}e^{\frac{1}{K_{0}}(K_{0}-t)},~~&p^{+}= 2,\end{cases}
\end{eqnarray*}
\end{theorem}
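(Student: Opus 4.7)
The plan has three stages: global existence via Galerkin approximation, derivation of a Bernoulli-type ODE for $\|u(t)\|_2^2$, and explicit integration.

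For existence I would build Galerkin approximations $u^{(m)}=\sum_{k=1}^m C_k^{(m)}(t)\varphi_k(x)$ using the basis from Lemma 2.4 and follow the compactness scheme used in the proof of Lemma 3.1. The only new ingredient is a uniform-in-$(m,t)$ bound on $\int_\Omega|\nabla u^{(m)}|^{p(x)}\,dx$, coming from three facts: (i) one can approximate $u_0$ so that $u_0^{(m)}\in \mathcal{N}_{+}$ (by density of $\{\varphi_k\}$ in $V(\Omega)$ together with continuity of $I$ and $J$), so Lemma 3.5(i) applied at the approximate level keeps $u^{(m)}(t)\in\mathcal{N}_{+}$; (ii) the energy dissipation $E(u^{(m)}(t))+\int_0^t\|u_s^{(m)}\|_2^2\,ds\leq E(u^{(m)}(0))$; and (iii) Lemma 3.4(2) applied to $u^{(m)}(t)\in J^{E(0)}\cap \mathcal{N}_{+}$ gives
\[
\int_\Omega|\nabla u^{(m)}|^{p(x)}\,dx\leq \frac{p^{+}r^{-}}{r^{-}-p^{+}}\,J(u_0^{(m)}).
\]
Passing $m\to\infty$ as in Lemma 3.1 and re-applying Lemma 3.5(i) to the limit yields a global weak solution with $u(t)\in\mathcal{N}_{+}$ and $\int_\Omega|\nabla u(t)|^{p(x)}\,dx\leq \frac{p^{+}r^{-}}{r^{-}-p^{+}}J(u(t))$ for all $t\geq 0$.

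For the decay I would test \eqref{e201} with $\xi=u$; since $\int_\Omega u\,dx=0$ the nonlocal correction cancels and I obtain
\[
\frac{1}{2}\frac{d}{dt}\|u(t)\|_2^2 = -I(u(t)).
\]
Along the orbit $\int_\Omega|\nabla u|^{p(x)}\,dx$ is uniformly bounded by $M_{0}:=\frac{p^{+}r^{-}}{r^{-}-p^{+}}J(u_0)$, so combining the Gagliardo--Nirenberg interpolation \eqref{1} with the modular/norm dichotomy of Lemma 2.2 and the Poincar\'{e}--Wirtinger inequality for zero-mean functions I would derive two quantitative estimates valid along the orbit,
\[
I(u)\geq K_{0}\,\|u\|_{2}^{p^{+}} \qquad\text{and}\qquad J(u)\leq C_{\#}\,\|u\|_{2}^{p^{+}},
\]
with positive constants depending only on $J(u_0),d,p^{\pm},r^{\pm},|\Omega|,\tilde{C},B$. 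Substituting the first into the identity above gives the Bernoulli inequality $\frac{d}{dt}\|u\|_2^2\leq -2K_{0}\|u\|_2^{p^{+}}$.

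Setting $y(t)=\|u(t)\|_{2}^{2}$ and integrating yields, for $p^{+}>2$,
\[
y(t)\leq \bigl(y(0)^{-(p^{+}-2)/2}+K_{0}(p^{+}-2)t\bigr)^{-2/(p^{+}-2)},
\]
and for $p^{+}=2$ an exponential decay $y(t)\leq y(0)e^{-2K_{0}t}$. Raising to the power $p^{+}/2$ and feeding through $J(u)\leq C_{\#}y(t)^{p^{+}/2}$ combined with the Step 1 bound reproduces, after relabelling constants as $K_{0}$ and $K_{1}$, precisely the estimate claimed in \eqref{3.35}. The main obstacle is to establish the two interpolation estimates with the specific exponent $p^{+}$ appearing in the rate: the naive Gagliardo--Nirenberg/Poincar\'{e} chain produces a $p^{-}$ or $p(x)$ exponent, and upgrading the exponent to $p^{+}$ requires a careful use of the modular/norm dichotomy of Lemma 2.2 together with the uniform modular bound $\int_\Omega|\nabla u|^{p(x)}\,dx\leq M_{0}$ along the orbit, which is the variable-exponent substitute for the scaling argument flagged in the introduction.
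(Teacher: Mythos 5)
Your Step~1 (global existence) is essentially the paper's argument: Galerkin approximation with $J(u^{(m)}(0))<d$ and $I(u^{(m)}(0))>0$ for large $m$, invariance of $\mathcal{N}_{+}$ as in Lemma~\ref{lem305}, the uniform bound $\int_{\Omega}|\nabla u^{(m)}|^{p(x)}dx\le p^{+}r^{-}d/(r^{-}-p^{+})$ obtained from $J\ge\frac{r^{-}-p^{+}}{p^{+}r^{-}}\int_{\Omega}|\nabla u|^{p(x)}dx+\frac{1}{r^{-}}I$, and the compactness scheme of Lemma~\ref{lem301}. No objection there.

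The decay step has a genuine gap. You propose the two pointwise inequalities $I(u)\ge K_{0}\|u\|_{2}^{p^{+}}$ and $J(u)\le C_{\#}\|u\|_{2}^{p^{+}}$ along the orbit, the first to close a Bernoulli inequality for $y=\|u\|_{2}^{2}$ and the second to transfer the resulting $L^{2}$ decay back to $J$ and to $\int_{\Omega}|\nabla u|^{p(x)}dx$. The first is fine in direction (Poincar\'e $\|u\|_{2}\le B_{0}\|\nabla u\|_{p(\cdot)}$, plus $I\ge(1-\delta_{0})\int_{\Omega}|\nabla u|^{p(x)}dx$, plus the uniform modular bound to pin the exponent at $p^{+}$). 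The second is not: since $J(u)\ge\frac{r^{-}-p^{+}}{p^{+}r^{-}}\int_{\Omega}|\nabla u|^{p(x)}dx$ on $\mathcal{N}_{+}$, the bound $J(u)\le C_{\#}\|u\|_{2}^{p^{+}}$ would force a reverse Poincar\'e inequality $\int_{\Omega}|\nabla u|^{p(x)}dx\le C\|u\|_{2}^{p^{+}}$, which fails on $V(\Omega)$ (fix the $L^{2}$ norm and let the gradient grow; membership in $\mathcal{N}_{+}\cap J^{J(u_{0})}$ does not exclude small $L^{2}$ norm with comparatively large gradient, and nothing in the flow supplies such a bound pointwise in $t$). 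The obstacle is therefore not, as you suggest, upgrading the exponent from $p^{-}$ to $p^{+}$; it is that the inequality points the wrong way. The paper avoids it entirely: it uses only the correct-direction estimates $J(u)\le\frac{1}{p^{-}(1-\delta_{0})}I(u)$ and $\|u\|_{2}^{2}\le K_{0}J(u)^{2/p^{+}}$, integrates $\frac{1}{2}\frac{d}{dt}\|u\|_{2}^{2}=-I(u)$ in time to get $\int_{S}^{\infty}J(u(t))\,dt\le K_{0}J(u(S))^{2/p^{+}}$, and then invokes the integral-inequality lemma of Martinez (Lemma~1 of \cite{PM}) together with the monotonicity of $J$ to obtain pointwise decay of $J$, hence of the modular. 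Your scheme can be repaired without the false inequality: since $J$ is nonnegative and nonincreasing along the orbit, $tJ(u(2t))\le\int_{t}^{2t}J\,ds\le\frac{1}{2p^{-}(1-\delta_{0})}\|u(t)\|_{2}^{2}$, and your $L^{2}$ decay $\|u(t)\|_{2}^{2}\lesssim t^{-2/(p^{+}-2)}$ then yields $J(u(2t))\lesssim t^{-p^{+}/(p^{+}-2)}$, which is the claimed rate; but as written the argument only proves decay of the $L^{2}$ norm, which is strictly weaker than the statement.
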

\begin{proof}
The proof is divided into two steps.\\
{\bf Step 1. Global existence.}

A weak solution $u(x,t)$ to Problem \eqref{equ101} is a limit function of the sequence of\break $\hbox{Galerkin's}$ approximation
$$u^{(m)}(x,t)=\sum\limits_{k=1}^{m}C_{k}^{(m)}(t)\varphi_{k}(x),~~\varphi_{k}(x)\in V(\Omega),~C_{k}^{(m)}(t)\in C^{1}[0,T),$$
where the coefficients $C^{m}_{k}(t)$ satisfy the relations
\begin{equation}\label{e202}
\begin{split}
&\int_{\Omega}[u^{(m)}_{t}\varphi_{k}+|\nabla
u^{(m)}|^{p(x)-2}\nabla u^{(m)}\nabla\varphi_{k}-F(x,t,u^{(m)})\varphi_{k}]dx=0,~~k=1,2,...m,\\
&~~~~~~~u^{(m)}(x,0)=\sum_{j=1}^m b^{(m)}_j\varphi_j(x)\rightarrow u_0(x) \ \ \text{in} \ \ V(\Omega).
\end{split}\end{equation}
Here $F(x,t,u^{(m)})=|u^{(m)}|^{r(x)-2}u^{(m)}
-\frac{1}{|\Omega|}\int_{\Omega}|u^{(m)}|^{r(x)-2}u^{(m)}dx.$

The existence of a local solution to system \eqref{e202} is guaranteed by Peano's theorem.
By the uniform estimates \eqref{e1}-\eqref{e3}, it is seen that the local solutions can be extended globally.

In fact, multiplying (\ref{e202}) by $(C_{k}^{(m)}(t))'$ and summing over
$k=1,2,\cdots,m,$ we arrive at the relation
\begin{eqnarray}\label{e301}
\int_0^t\|u^{(m)}_{\tau}\|_2^2\mathrm{d}\tau+J(u^{(m)})=J(u^{(m)}(0)), \ \ \ 0\leq t<\infty.
\end{eqnarray}
Due to the convergence of $u^m(x, 0)\rightarrow u_0(x)$ in $ V(\Omega)$, one has
\begin{eqnarray*}
J(u^{(m)}(x, 0))\rightarrow J(u_0(x))<d\quad\text{and}\quad I(u^{(m)}(x,0))\rightarrow I(u_0(x))>0.
\end{eqnarray*}
Therefore, for sufficiently large $m$ and for any $0\leq t<\infty$, we obtain
\begin{eqnarray}\label{3.4}
\int_0^t\|u^{(m)}_{\tau}\|_2^2\mathrm{d}\tau+J(u^{(m)})=J(u^{(m)}(0))<d\quad\text{and}\quad I(u^m(x,0))>0,
\end{eqnarray}
which implies that $u^m(x,0)\in \mathcal{N}_{+}$ for sufficiently large $m$.

By applying the similar arguments of Lemma \ref{lem305},
one can show from \eqref{3.4} that $u^m(x,t)\in \mathcal{N}_{+}$ for sufficiently large $m$ and $0\leq t<\infty$. Moreover, by the definition $\mathcal{N}_+$, we deduce $I(u^m(x,t))>0$ or $u^m(x,t)=0$. Then, using the following inequality
\begin{eqnarray*}
J(u^{(m)})\geq\frac{r^{-}-p^{+}}{p^{+}r^{-}}\int_{\Omega}|\nabla u^{(m)}|^{p(x)}dx+\frac{1}{r^{-}}I(u^{(m)}),
\end{eqnarray*}
and \eqref{3.4}, we obtain
\begin{eqnarray}\label{3.5}
\int_0^t\|u^{(m)}_{\tau}\|_2^2\mathrm{d}\tau+\frac{r^{-}-p^{+}}{p^{+}r^{-}}\int_{\Omega}|\nabla u^{(m)}|^{p(x)}dx<d,
\end{eqnarray}
for sufficiently large $m$ and for any $0\leq t<\infty$, which then yields
\begin{equation}\label{e1}
\int_{\Omega}|\nabla u^{(m)}|^{p(x)}dx\leq\frac{p^{+}r^{-}d}{r^{-}-p^{+}},\quad 0\leq t<\infty,
\end{equation}
\begin{equation}\label{e2}
\int_0^t\|u^{(m)}_{\tau}\|_2^2\mathrm{d}\tau<d,\quad 0\leq t<\infty,
\end{equation}
\begin{equation}\label{e3}
\|u^{(m)}\|_{r(x)}\leq B\|\nabla u^{(m)}\|_{p(.)}\leq B\max\Big\{\Big(\frac{p^{+}r^{-}d}{r^{-}-p^{+}}\Big)^{\frac{1}{p^{-}}},\Big(\frac{p^{+}r^{-}d}{r^{-}-p^{+}}\Big)^{\frac{1}{p^{-}}}\Big\},\quad 0\leq t<\infty.
\end{equation}
Once again, we follow the lines of the proof of Lemma \ref{lem301} to obtain that there exists
 $u(x,t)\in L^{\infty}(0,\infty, V(\Omega)),u_{t}\in L^{2}(\Omega\times(0,\infty))$ such that
 \begin{align}\label{addd104}
 J(u(t))+\int_{0}^{t}\int_{\Omega}|u_{\tau}|^{2}dxd\tau\leq J(u_{0}).
 \end{align}
Moreover, the first conclusion of Lemma \ref{lem305} together with (\ref{addd104}) tells us the fact $u(x,t)\in \mathcal{N}_{+},t>0.$

{\bf Step 2. Decay rate.} Choosing the test function $\varphi=u(x,t)$
 in \eqref{e201}, picking $t_{1}=t,~t_{2}=t+\delta,~(0<t<t+\delta<T)$, multiplying by $\frac{1}{\delta}$ and letting
$\delta\rightarrow0^+$, we obtain the relations by
$\textrm{Lebesgue}$ Dominated Convergence theorem that
\begin{eqnarray}\label{3.33}
\frac{1}{2}\frac{d}{dt}\|u\|_2^2=(u_t, u)=-\int_{\Omega}|\nabla u|^{p(x)}dx+\int_{\Omega}|u|^{r(x)}dx=-I(u).
\end{eqnarray}
From Lemma \ref{lem305} we know that $u(x, t)\in \mathcal{N}_{+}$ for $0<t<\infty$ under the condition
$J(u_0)<d$ and $I(u_0)>0$. Thus we have $I(u)>0$ for $u\not\equiv0,~0<t<\infty$. In addition,  the fact $\lim\limits_{\delta\rightarrow1^{-}}\Big(\delta\int_{\Omega}|\nabla u|^{p(x)}dx-\int_{\Omega}|u|^{r(x)}dx\Big)=I(u)$ ensures there must exist a $0<\delta_{0}<1$ such that
\begin{align}\label{3.34}
\delta_{0}\int_{\Omega}|\nabla u|^{p(x)}dx-\int_{\Omega}|u|^{r(x)}dx\geq0,~~0\not\equiv u\in \mathcal{N}_{+}.
\end{align}

To obtain  decay estimate of solutions, we first establish the equivalent relations between $J(u)$ and $I(u)$. Actually, on the one hand, by \eqref{3.34} and the definition of $I(u)$, we have
\begin{align}\label{add3.34}
I(u)>\int_{\Omega}|\nabla u|^{p(x)}dx-\delta_{0}\int_{\Omega}|\nabla u|^{p(x)}dx=(1-\delta_{0})\int_{\Omega}|\nabla u|^{p(x)}dx.
\end{align}
Further, the definition of $J(u)$ and \eqref{add3.34} yield
\begin{align}\label{add3.34-1}
J(u)\leq\frac{1}{p^{-}}\int_{\Omega}|\nabla u|^{p(x)}dx
\leq\frac{I(u)}{p^{-}(1-\delta_{0})}.
\end{align}
On the other hand, by means of the definition of $J(u)$ and $I(u)$ as well as $I(u)>0$, we have
\begin{equation}\label{add3.34-2}
\begin{split}
J(u)&\geq\frac{1}{p^{+}}\int_{\Omega}|\nabla u|^{p(x)}dx-\frac{1}{r^{-}}\int_{\Omega}|u|^{r(x)}dx
\\&=\frac{r^{-}-p^{+}}{p^{+}r^{-}}\int_{\Omega}|\nabla u|^{p(x)}dx+\frac{1}{r^{-}}I(u)
\geq\frac{r^{-}-p^{+}}{p^{+}r^{-}}\int_{\Omega}|\nabla u|^{p(x)}dx.
\end{split}
\end{equation}
Therefore, by \eqref{3.33} and \eqref{add3.34-1}, one has for $0\leqslant S<T<\infty,$
\begin{equation}\label{add3.34-3}
\begin{split}
\int_{S}^{T}J(u(t))dt&\leq\frac{1}{p^{-}(1-\delta_{0})}\int_{S}^{T}I(u(t))dt
=\frac{1}{2p^{-}(1-\delta_{0})}\int_{S}^{T}\frac{d}{dt}\|u\|_2^2dt
\\&\leq\frac{1}{2p^{-}(1-\delta_{0})}\|u(S)\|_2^2.
\end{split}
\end{equation}
Once again, we utilize the embedding inequality $\|u\|_{2}\leq B_{0}\|\nabla u\|_{p(.)}, \forall\ u\in V(\Omega)$ ($B_{0}>0$ is the best embedding constant from $V(\Omega)$ to $L^2(\Omega)$) to obtain
\begin{equation}\label{add3.34-4}
\begin{split}
\frac{1}{2p^{-}(1-\delta_{0})}\|u(S)\|_2^2&\leq\frac{B^{2}_{0}}{2p^{-}(1-\delta_{0})}\|\nabla u(S)\|^{2}_{p(.)}
\\
&\leq \frac{B^{2}_{0}}{2p^{-}(1-\delta_{0})}\max\Big\{(\int_{\Omega}|\nabla u(S)|^{p(x)}dx)^{\frac{2}{p^{+}}},
(\int_{\Omega}|\nabla u(S)|^{p(x)}dx)^{\frac{2}{p^{-}}}\Big\}
\\
&{\overset{\eqref{e1}}\leq}\frac{B^{2}_{0}}{2p^{-}(1-\delta_{0})}\max\Big\{1,
(\frac{p^{+}r^{-}d}{r^{-}-p^{+}})^{\frac{2}{p^{-}}-\frac{2}{p^{+}}}\Big\}
(\int_{\Omega}|\nabla u(S)|^{p(x)}dx)^{\frac{2}{p^{+}}}
\\
&\overset{\eqref{add3.34-2}}\leq\frac{B^{2}_{0}}{2p^{-}(1-\delta_{0})}\max\Big\{1,
(\frac{p^{+}r^{-}d}{r^{-}-p^{+}})^{\frac{2}{p^{-}}-\frac{2}{p^{+}}}\Big\}
\Big(\frac{p^{+}r^{-}}{r^{-}-p^{+}}\Big)^{\frac{2}{p^{+}}}
J^{\frac{2}{p^{+}}}(u(S))\\
&=K_{0}J^{\frac{2}{p^{+}}}(u(S)).
\end{split}
\end{equation}
where
\begin{align*}
K_{0}=\frac{B^{2}_{0}}{2p^{-}(1-\delta_{0})}\max\Big\{1,
(\frac{p^{+}r^{-}d}{r^{-}-p^{+}})^{\frac{2}{p^{-}}-\frac{2}{p^{+}}}\Big\}
\Big(\frac{p^{+}r^{-}}{r^{-}-p^{+}}\Big)^{\frac{2}{p^{+}}}.
\end{align*}
\eqref{add3.34-3} and \eqref{add3.34-4} indicate that
\begin{align*}
\int_{S}^{T}J(u(t))dt\leq K_{0}J^{\frac{2}{p^{+}}}(u(S)).
\end{align*}
Letting $T\rightarrow\infty$, we have
\begin{align}\label{add3.34-6}
\int_{S}^{\infty}J(u(t))dt\leq K_{0}J^{\frac{2}{p^{+}}}(u(S)).
\end{align}
Consequently, for $t>0$, the forthcoming estimates follow from \eqref{add3.34-6} and Lemma 1 of \cite{PM}
\begin{eqnarray*}
J(u(t))\leq\begin{cases}\Big(\frac{K^{2}_{1}p^{+}}{K_{1}+K_{0}(p^{+}-2)t}\Big)^{\frac{p^{+}}{p^{+}-2}},~~&p^{+}>2;\\
K_{1}e^{\frac{1}{K_{0}}(K_{0}-t)},~~&p^{+}= 2,\end{cases}
\end{eqnarray*}
where
\begin{align}\label{3.35}
K_{1}=J(u_{0}),~K_{0}=\frac{B^{2}_{0}}{2p^{-}(1-\delta_{0})}\max\Big\{1,
(\frac{p^{+}r^{-}d}{r^{-}-p^{+}})^{\frac{2}{p^{-}}-\frac{2}{p^{+}}}\Big\}
\Big(\frac{p^{+}r^{-}}{r^{-}-p^{+}}\Big)^{\frac{2}{p^{+}}}.
\end{align}
The proof is complete.
\end{proof}
\begin{theorem}\label{th3.2}(Blow-up for $J(u_0)<d$.)
Let  ${\mathbf{(H)}}$ hold and $u$ be a weak solution of Problem \eqref{equ101} with $u_0\in V(\Omega)$.
If $J(u_0)<d$ and $I(u_0)<0$, then there exists a finite time $T$ such that $u$ blows up at $T$.
\end{theorem}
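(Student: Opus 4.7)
The plan is to carry out Levine's classical concavity (potential-well) method, adapted to the variable-exponent setting via the invariance in $\mathcal{N}_-$. By Lemma~\ref{lem305}(ii), the hypotheses $J(u_0)<d$ and $I(u_0)<0$ force $u(\cdot,t)\in\mathcal{N}_-$ throughout the existence interval $[0,T_{\max})$; combined with \eqref{Key ine01} this gives the uniform lower bound $\int_\Omega|\nabla u(t)|^{p(x)}dx\geq c_0>0$, and Lemma~\ref{lem301} gives $J(u(t))+\int_0^t\|u_s\|_2^2\,ds\leq J(u_0)<d$ throughout.

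Next I would introduce the auxiliary functional
\[
G(t):=\int_0^t \|u(s)\|_2^2\,ds+(T_0-t)\|u_0\|_2^2+\beta(t+\sigma)^2,
\]
with positive parameters $T_0,\beta,\sigma$ to be chosen. Testing the weak formulation with $\xi=u$ gives $\tfrac12\tfrac{d}{dt}\|u\|_2^2=-I(u)$, so $G'(t)=2\int_0^t(u,u_s)\,ds+2\beta(t+\sigma)$ and $G''(t)=-2I(u(t))+2\beta$. Two nested Cauchy--Schwarz estimates (in $L^2(Q_t)$ and then in $\mathbb{R}^2$ against the scalar piece $\beta(t+\sigma)=\sqrt{\beta(t+\sigma)^2}\cdot\sqrt{\beta}$) yield $(G'(t))^2\leq 4\,G(t)\bigl[\int_0^t\|u_s\|_2^2\,ds+\beta\bigr]$. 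Feeding in the energy inequality $\int_0^t\|u_s\|_2^2\,ds\leq J(u_0)-J(u(t))$, one obtains
\[
G(t)G''(t)-(1+\alpha)(G'(t))^2\geq G(t)\bigl[-2I(u)+4(1+\alpha)J(u)-4(1+\alpha)J(u_0)-2\beta(2\alpha+1)\bigr].
\]

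The algebraic heart of the argument is to take $\alpha=\tfrac{r^-}{2}-1>0$ (permissible because $r^->2$), so that $4(1+\alpha)=2r^-$. Using $\tfrac{1}{p(x)}\geq\tfrac{1}{p^+}$ and $\tfrac{1}{r(x)}\leq\tfrac{1}{r^-}$, the combination $-2I(u)+2r^-J(u)$ rearranges into $\int[\tfrac{2r^-}{p(x)}-2]|\nabla u|^{p(x)}+\int[2-\tfrac{2r^-}{r(x)}]|u|^{r(x)}\geq\tfrac{2(r^--p^+)}{p^+}\int_\Omega|\nabla u|^{p(x)}dx$. To ensure nonnegativity of the bracket, I would exploit the strict gap $d-J(u_0)>0$: a compactness argument along $\mathcal{N}_-\cap\{J\leq J(u_0)\}$ (if a sequence had $|I(u_n)|\to 0$ there, passage to a subsequential limit, which is nonzero by \eqref{Key ine01}, would produce $u_*\in\mathcal{N}$ with $J(u_*)\leq J(u_0)<d$, contradicting $d=\inf_\mathcal{N}J$) yields a uniform lower bound $|I(u(t))|\geq\delta=\delta(d-J(u_0))>0$. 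Combining this with $\int_\Omega|\nabla u|^{p(x)}dx\geq c_0$ and choosing $\beta>0$ small, $\sigma,T_0$ sufficiently large, closes the estimate $G\,G''\geq(1+\alpha)(G')^2$.

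The concavity inequality is equivalent to $(G^{-\alpha})''\leq 0$. Since $G(0)=T_0\|u_0\|_2^2+\beta\sigma^2>0$ and $(G^{-\alpha})'(0)=-2\alpha\beta\sigma\,G(0)^{-\alpha-1}<0$, the concave function $G^{-\alpha}$ must vanish at some finite $t^*$, forcing $G(t)\to+\infty$ as $t\to t^*$; this contradicts global existence, whence $T_{\max}\leq t^*<\infty$. The main obstacle is precisely the parameter bookkeeping in the third step: because the variable exponents $p(x),r(x)$ preclude the clean scaling $u\mapsto\lambda u$ that drives constant-exponent Levine arguments, the uniform positive lower bound on $|I(u)|$ along $\mathcal{N}_-\cap\{J\leq J(u_0)\}$ cannot be extracted by a direct algebraic identity and must instead come from the compactness/contradiction argument sketched above.
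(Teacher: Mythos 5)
Your overall framework (invariance of $\mathcal{N}_-$ from Lemma \ref{lem305}, the energy inequality from Lemma \ref{lem301}, and a Levine-type concavity functional) matches the paper's strategy; the paper uses the bare functional $M(t)=\int_0^t\|u\|_2^2\,d\tau$ and shows directly that $M''>0$ forces the correction terms to become harmless for large $t$, whereas you use the classical auxiliary terms $(T_0-t)\|u_0\|_2^2+\beta(t+\sigma)^2$. Either variant would close \emph{once the bracket is shown to be positive}, and that is exactly where your argument has a genuine gap. Your proposed compactness argument on $\mathcal{N}_-\cap\{J\leq J(u_0)\}$ does not work: if $u_n\in\mathcal{N}_-$ with $I(u_n)\to 0^-$ and $u_n\rightharpoonup u_*$ in $V(\Omega)$, then $\int_\Omega|u_n|^{r(x)}dx\to\int_\Omega|u_*|^{r(x)}dx$ at best (and under $(\mathbf{H})$ the case $r^+=Np^-/(N-p^-)$ is allowed, where even this compactness fails), while $\int_\Omega|\nabla u_*|^{p(x)}dx\leq\liminf\int_\Omega|\nabla u_n|^{p(x)}dx$ only. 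Hence the limit satisfies $I(u_*)\leq 0$, not $I(u_*)=0$, and a nonzero $u_*$ with $I(u_*)<0$ and $J(u_*)<d$ yields no contradiction with $d=\inf_{\mathcal{N}}J$ --- indeed the hypothesis of the theorem itself exhibits such an element, namely $u_0$. (Also, weak lower semicontinuity of the gradient modular runs the wrong way to conclude $u_*\neq 0$ from \eqref{Key ine01}; you would need the $L^{r(x)}$ bound $\int|u_n|^{r(x)}dx>\int|\nabla u_n|^{p(x)}dx\geq c_0$ instead.) So the uniform lower bound $-I(u(t))\geq\delta>0$ is not established, and without it neither your bracket nor the paper's differential inequality \eqref{3.14} can be made positive, since $\frac{r^--p^+}{p^+r^-}\,c_0\leq d$ by the proof of Lemma \ref{LEM302} and $J(u_0)$ may lie anywhere below $d$.

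Moreover, your closing remark that this bound ``cannot be extracted by a direct algebraic identity'' is precisely where the paper does something different and better. For any $u\in\mathcal{N}_-$ one sets $f(\lambda)=I(\lambda u)$ and $g(\lambda)=r^-J(\lambda u)-f(\lambda)$; since $r^->p^+$ there is $\lambda^*\in(0,1)$ with $I(\lambda^*u)=0$, and the explicit computation
\begin{equation*}
g'(\lambda)\geq\int_{\Omega}(r^{-}-p(x))\lambda^{p(x)-1}|\nabla u|^{p(x)}dx\geq 0
\end{equation*}
gives $g(1)\geq g(\lambda^*)$, i.e.\ $r^-J(u)-I(u)\geq r^-J(\lambda^*u)\geq r^-d$, which is the pointwise inequality \eqref{3.17}: $I(u)\leq r^-(J(u)-d)$. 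Combined with $J(u(t))\leq J(u_0)$ this yields $M''(t)=-2I(u)\geq 2r^-(d-J(u_0))>0$ with an explicit constant, no compactness required, and the monotonicity argument survives the loss of homogeneity of the modular. To repair your proof you should replace the compactness step by this monotonicity lemma; the rest of your concavity bookkeeping then goes through.
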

\begin{proof}
Some ideas of the proof comes from Theorem 3.1 in \cite{YZH}.
We will divide the proof of this theorem into the following three steps:

{\bf Step 1. Establish a differential inequality for the solution.}
Assume on the contrary that $u$ is a global weak solution to Problem \eqref{equ101} with $J(u_0)<d$, $I(u_0)<0$ and define
\begin{eqnarray*}
M(t)=\int_0^t\|u\|_2^2\mathrm{d}\tau,\quad\ t\geq0,
\end{eqnarray*}
then
\begin{eqnarray}\label{3.11}
M'(t)=\|u\|_2^2,
\end{eqnarray}
and
\begin{eqnarray}\label{3.12}
M''(t)=2(u_t, u)=-2\Big(\int_{\Omega}|\nabla u|^{p(x)}dx-\int_{\Omega}|u|^{r(x)}dx\Big)=-2I(u).
\end{eqnarray}
Direct computations show that
\begin{eqnarray}\label{3.13}
J(u)\geq \frac{r^{-}-p^{+}}{r^{-}p^{+}}\int_{\Omega}|\nabla u|^{p(x)}dx+\frac{1}{r^{-}}I(u).
\end{eqnarray}
By \eqref{3}, \eqref{3.12} and \eqref{3.13}, we can get
\begin{eqnarray*}
M''(t)&\geq&\frac{2(r^{-}-p^{+})}{p^{+}}\int_{\Omega}|\nabla u|^{p(x)}dx-2r^{-}J(u)\\
&=&\frac{2(r^{-}-p^{+})}{p^{+}}\int_{\Omega}|\nabla u|^{p(x)}dx+2r^{-}\int_0^t\|u_{\tau}\|_2^2\mathrm{d}\tau-2r^{-}J(u_0).
\end{eqnarray*}
Noticing that
\begin{eqnarray*}
(M'(t))^2=4\bigg(\int_0^t\int_\Omega u_{\tau}u\mathrm{d}x\mathrm{d}\tau\bigg)^2+2\|u_0\|_2^2M'(t)-\|u_0\|_2^4,
\end{eqnarray*}
we have
\begin{eqnarray*}
M''(t)M(t)-\frac{r^{-}}{2}M'(t)^2&\geq&2r^{-}\int_0^t\|u_{\tau}\|_2^2\mathrm{d}\tau\int_0^t\|u\|_2^2\mathrm{d}\tau -2r^{-}J(u_0)M(t)\\
&&+\frac{2(r^{-}-p^{+})}{p^{+}}\int_{\Omega}|\nabla u|^{p(x)}dx M(t)-2r^{-}\bigg(\int_0^t\int_{\Omega}u_{\tau}u\mathrm{d}x\mathrm{d}\tau\bigg)^2\\
&&-r^{-}\|u_0\|^2_2M'(t)+\frac{r^{-}}{2}\|u_0\|_2^4.
\end{eqnarray*}
With the help of Cauchy-Schwarz inequality, one derives
$$\Big(\int_0^t\int_{\Omega}u_{\tau}u\mathrm{d}x\mathrm{d}\tau\Big)^2\leq \int_0^t\|u_{\tau}\|_2^2\mathrm{d}\tau\int_0^t\|u\|_2^2\mathrm{d}\tau.$$
we further obtain
\begin{equation}\label{3.14}\begin{split}
M''(t)M(t)-\frac{r^{-}}{2}M'(t)^2&\geq
\Bigg(\frac{2(r^{-}-p^{+})}{p^{+}}\int_{\Omega}|\nabla u|^{p(x)}dx-2r^{-}J(u_0)\Bigg) M(t)-\ r^{-}\|u_0\|^2_2M'(t).
\end{split}
\end{equation}
Next, we establish the quantitative relationship between $\int_{\Omega}|\nabla u|^{p(x)}dx$ and $M'(t)$. Obviously, Lemma \ref{PRO102} and
Inequality \eqref{201} yield
\begin{equation}\label{3.15}\begin{split}
\int_{\Omega}|\nabla u|^{p(x)}dx
&\geq\min\Big\{\|\nabla u\|^{p{+}}_{p(.)},\|\nabla u\|^{p{+}}_{p(.)}\Big\}
\geq\min\Big\{(B_{0}^{-1}\|u\|_{2})^{p{+}},(B_{0}^{-1}\|u\|_{2})^{p{-}}\Big\}
\\&=\min\Big\{(B_{0}^{-2}M'(t))^{\frac{p{+}}{2}},(B_{0}^{-2}M'(t))^{\frac{p{-}}{2}}\Big\}.
\end{split}
\end{equation}
Hence, by combining \eqref{3.14} with \eqref{3.15}, we have
\begin{equation}\label{3.16}
\begin{split}
M''(t)M(t)-\frac{r^{-}+2}{4}M'(t)^2&\geq
\frac{2(r^{-}-p^{+})}{p^{+}}\min\Big\{(B_{0}^{-2}M'(t))^{\frac{p{+}}{2}},(B_{0}^{-2}M'(t))^{\frac{p{-}}{2}}\Big\}M(t)
\\&-2r^{-}J(u_0) M(t)+\frac{r^{-}-2}{4}M'(t)^2-\ r^{-}\|u_0\|^2_2M'(t).
\end{split}
\end{equation}
{\bf Step 2. Important observation.} For~any~$u\in \mathcal{N}_{-}$, we claim
\begin{align}\label{3.17}
I(u)\leq r^{-}(J(u)-d).
\end{align}To prove this assertion, first of all,
 for any $\lambda>0$, define $f(\lambda)=I(\lambda u),$ then it is not hard to verify that $f(\lambda)$ is continuous with respect to $\lambda$.  A simple computation shows that
\begin{equation*}
f(\lambda)\geq \lambda^{p^{+}}\int_{\Omega}|\nabla u|^{p(x)}dx-\lambda^{r^{-}}\int_{\Omega}|u|^{r(x)}dx,~~0<\lambda<1.
\end{equation*}
On the one hand, Lemma \ref{lem305} ensures $u(t)\in \mathcal{N}_{-},$ for any $t\geq0$. i.e. $I(u)<0,~~t>0.$
On the other hand, from $r^{-}>p^{+}$, it is not difficult to deduce that there exists a positive constant $0<\lambda_{0}:=\lambda_{0}(u)<1$ such that
\begin{align*}
\lambda_{0}^{p^{+}}\int_{\Omega}|\nabla u|^{p(x)}dx-\lambda_{0}^{r^{-}}\int_{\Omega}|u|^{r(x)}dx>0\Rightarrow f(\lambda_{0})>0,
\end{align*}
which yields that, from the intermediate value theorem and $f(1)=I(u)<0$, there exists a positive constant $1>\lambda^{*}>0$ such that $f(\lambda^{*})=0,$ i.e. $I(\lambda^{*}u)=0.$

Subsequently, set $g(\lambda)=r^{-}J(\lambda u)-f(\lambda).$ Then, for any $0\not\equiv u\in  V(\Omega)$, a direct computation tells us that
\begin{align*}
g'(\lambda)&=r^{-}J'(\lambda u)-f'(\lambda)=r^{-}\Big(\int_{\Omega}\lambda^{p(x)-1}|\nabla u|^{p(x)}dx-\int_{\Omega}\lambda^{r(x)-1}|u|^{r(x)}dx\Big)\\
&~~~~-
\Big(\int_{\Omega}p(x)\lambda^{p(x)-1}|\nabla u|^{p(x)}dx-\int_{\Omega}r(x)\lambda^{r(x)-1}|u|^{r(x)}dx\Big)\\
&\geq\int_{\Omega}(r^{-}-p(x))\lambda^{p(x)-1}|\nabla u|^{p(x)}dx\geq0,
\end{align*}
which implies that
\begin{align*}
g(1)\geq g(\lambda^{*})\Rightarrow r^{-}J( u)-I(u)\geq r^{-}J(\lambda^{*} u)-I(\lambda^{*}u).
\end{align*}
It is obvious that \eqref{3.17} follows from the inequality above and the definition of $d$.

{\bf Step 3. Derive a contradiction.}
\begin{eqnarray*}
M''(t)=-2I(u)\geq2r^{-}(d-J(u))\geq2r^{-}(d-J(u_{0}))>0,
\end{eqnarray*}
which in turn implies for all $t\geq 0$ that
\begin{eqnarray*}
&&M'(t)\geq 2r^{-}(d-J(u_{0}))t, \\
&&M(t)\geq r^{-}(d-J(u_{0}))t^2.
\end{eqnarray*}
Therefore, for sufficiently large $t$, we have
\begin{eqnarray*}
&&\frac{2(r^{-}-p^{+})}{p^{+}}\min\Big\{(B_{0}^{-2}M'(t))^{\frac{p{+}}{2}},(B_{0}^{-2}M'(t))^{\frac{p{-}}{2}}\Big\}
-2r^{-}J(u_0)\geq0,\\
&&\frac{r^{-}-2}{4}M'(t)-\ r^{-}\|u_0\|^2_2\geq0.
\end{eqnarray*}

Consequently, from \eqref{3.14} and the above two inequalities, one has that
\begin{eqnarray}\label{3.18}
M''(t)M(t)-\frac{r^{-}+2}{4}M'(t)^2>0,~~t\geq t^{*},
\end{eqnarray}
where
\begin{align*}
t^{*}=\max\left\{\frac{1}{d-J(u_{0})}\frac{1}{2r^{-}}
\Bigg(\frac{p^{+}r^{-}|J(u_{0})|}{(r^{-}-p^{+})\min\Big\{B_{0}^{-p^{+}},B_{0}^{-p^{-}}\Big\}}\Bigg)^{\frac{2}{p^{-}}},
\frac{2\|u_{0}\|_{2}}{\sqrt{(r^{-}-2)(d-J(u_{0}))}}\right\}.
\end{align*}
From \eqref{3.18} it follows with $\alpha=\dfrac{r^{-}-2}{4}>0$ that
\begin{equation*}
\dfrac{\mathrm{d}}{\mathrm{d}t}\Big(\dfrac{M'(t)}{M^{1+\alpha}(t)}\Big)>0, \ \forall\ t\geq t^*,
\end{equation*}
or
\begin{equation}\label{3.19}
\dfrac{M'(t)}{M^{1+\alpha}(t)}>\dfrac{M'(t^*)}{M^{1+\alpha}(t^*)},\ \forall\ t>t^*.
\end{equation}
Integrating both sides of \eqref{3.19} over $(t^*,t)$ we can see that $M(t)$ can not remain finite for all $t>t^*$,
and therefore reaches a contradiction. Furthermore, By continuation theorem (in fact, we may follow the lines of the proof of  Theorem 3.1 of \cite{Bal77}), we have $\lim\limits_{t\rightarrow T}M(t)=+\infty.$ The proof is complete.
\end{proof}

\begin{remark} (Sharp condition for $J(u_0)<d$.)
Let (H) hold and $u_0\in  V(\Omega)$. Assume that $J(u_0)<d$.
If $I(u_0)>0$, Problem \eqref{equ101} admits a global weak solution; if $I(u_0)<0$,
all solutions to Problem \eqref{equ101} blow up in finite time.
\end{remark}
\begin{remark}
{\rm Noting that $B_{1}=B+1,$   a simple computation ensures that}
\begin{align*}
\frac{r^{-}-p^{+}}{p^{+}r^{-}}\min\Big\{B^{\frac{r^+p^-}{p^--r^+}},B^{\frac{r^-p^+}{p^+-r^-}}\Big\}>\Big(\frac{r^{-}-p^{+}}{p^{+}r^{-}}\Big)\alpha_1:= E_1,
\end{align*}
{\rm which implies that $E_{1}$ is not optimal. }
\end{remark}
For the critical case $J(u_0)=d$, the invariance of $\mathcal{N}_{+}$ can not be proved in general.
By using the method of approximation, we can still prove the global existence of weak solutions.
\begin{theorem}\label{th4.1}(Global existence for $J(u_0)=d.$)
Let  ${\mathbf{(H)}}$ hold and $u_0\in  V(\Omega)$. If $J(u_0)=d$ and $I(u_0)\geq0$, then Problem \eqref{equ101} admits a global weak
solution.
\end{theorem}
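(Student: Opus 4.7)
The plan is to perturb $u_0$ from below by a scalar multiple, invoke Theorem~\ref{th3.1} on the perturbed data, and then pass to the limit.

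First, I would set $\lambda_k=1-1/k$ for $k\geq 2$ and $u_{0k}=\lambda_k u_0$. A direct computation gives
\[
\frac{d}{d\lambda}J(\lambda u_0)=\frac{1}{\lambda}I(\lambda u_0),\qquad \lambda>0,
\]
so the sign of $I(\lambda u_0)$ controls the monotonicity of $\lambda\mapsto J(\lambda u_0)$. If $I(u_0)>0$, continuity yields $I(\lambda u_0)>0$ on a left neighbourhood of $\lambda=1$, hence $J(u_{0k})<J(u_0)=d$ and $I(u_{0k})>0$ for all large $k$. If $I(u_0)=0$ (so that $u_0\in\mathcal{N}$ is a minimiser of $J$ on $\mathcal{N}$), then for $\lambda\in(0,1)$ the pointwise inequalities $\lambda^{p(x)}\geq\lambda^{p^+}$ and $\lambda^{r(x)}\leq\lambda^{r^-}$ combined with $p^+<r^-$ give
\[
I(\lambda u_0)\geq \bigl(\lambda^{p^+}-\lambda^{r^-}\bigr)\int_{\Omega}|\nabla u_0|^{p(x)}\,dx>0,
\]
so $I(u_{0k})>0$ and $J(u_{0k})<d$ for every $k\geq 2$. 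In either case $u_{0k}\to u_0$ in $V(\Omega)$.

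Next, Theorem~\ref{th3.1} produces a global weak solution $u_k$ of Problem~\eqref{equ101} with initial datum $u_{0k}$, and the estimates obtained there together with $u_k(t)\in\mathcal{N}_+$ give the uniform bounds
\[
\int_{\Omega}|\nabla u_k|^{p(x)}\,dx\leq \frac{p^{+} r^{-}\,d}{r^{-}-p^{+}},\qquad \int_0^T\|u_{k,\tau}\|_2^2\,d\tau\leq d,
\]
for every $T>0$ and every $k$. Via Lemmas~\ref{PRO102} and~\ref{addlemma} these provide uniform control of $\{u_k\}$ in $W(Q_T)\cap L^\infty(0,T;L^2(\Omega))$ and of $\{u_{k,t}\}$ in $L^2(Q_T)$.

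Finally, reproducing the compactness argument of the proof of Lemma~\ref{lem301}, I would extract a subsequence along which $u_k\rightharpoonup u$ weakly in $W(Q_T)$ and strongly in $L^{r^-}(0,T;L^{r(x)}(\Omega))$, $u_k\to u$ a.e.\ in $Q_T$, $u_{k,t}\rightharpoonup u_t$ in $L^2(Q_T)$, and $|\nabla u_k|^{p(x)-2}\nabla u_k\rightharpoonup A$ in $L^{(p^-)'}(0,T;L^{p'(x)}(\Omega))$. The source $F(x,t,u_k)$ passes to $F(x,t,u)$ through a.e.\ convergence and equi-integrability in $L^{r'(x)}(\Omega)$, the initial trace is recovered from $u_{0k}\to u_0$ in $V(\Omega)$, and Fatou's lemma transfers the energy inequality to the limit. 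The main obstacle I anticipate is the identification $A=|\nabla u|^{p(x)-2}\nabla u$: because the variable exponent rules out any simple scaling, this must be handled through a Minty-type monotonicity argument in the $\rm{Orlicz}$--Sobolev setting, along the lines of Theorem~2.1 in \cite{BGWJG-4}.
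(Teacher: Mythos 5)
Your proposal is correct and follows essentially the same route as the paper: approximate $u_0$ by $\lambda_k u_0$ with $\lambda_k=1-\frac{1}{k}$, verify $I(\lambda_k u_0)>0$ and $J(\lambda_k u_0)<J(u_0)=d$, invoke Theorem \ref{th3.1} for each $k$, and pass to the limit via the uniform bounds and the compactness machinery of Lemma \ref{lem301}. The only cosmetic difference is that you justify the sign and monotonicity claims through the identity $\frac{d}{d\lambda}J(\lambda u_0)=\lambda^{-1}I(\lambda u_0)$ and a direct lower bound when $I(u_0)=0$, whereas the paper argues through the existence of some $\lambda^{**}\geq 1$ with $I(\lambda^{**}u_0)=0$.
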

\begin{proof}
Let $\lambda_k=1-\frac{1}{k}$, $k=1, 2,\cdots$. Consider the following initial boundary value problem
\begin{eqnarray}\label{4.1}
\ \ \  \begin{cases}
u_t-\mathrm{div}(|\nabla u|^{p(x)-2}\nabla u)=|u|^{r(x)-2}u-\dfrac{1}{|\Omega|}\int_{\Omega}|u|^{r(x)-2}u dx, & (x,t)\in\Omega\times(0,T),\\
\dfrac{\partial u(x,t)}{\partial \nu}=0, & (x,t)\in\partial \Omega\times(0,T),\\
u(x,0)=\lambda_k u_0(x)\triangleq u_0^k, &x\in \Omega.
\end{cases}
\end{eqnarray}
Noticing that $I(u_0)\geq0$ and following the proof of \eqref{Add1001}, we can deduce that there exists a  $\lambda^{**}=\lambda^{**}(u_0)\geq1$ such that $I(\lambda^{**} u_0)=0$.
Then from $\lambda_k<1\leq\lambda^{**}$, we get $I(u_0^k)=I(\lambda_k u_0)>0$
and $J(u_0^k)=J(\lambda_k u_0)<J(u_0)=d$. In view of Theorem \ref{th3.1}, it follows that for each $k$ Problem \eqref{4.1}
admits a global weak solution $u\in L^{\infty} (0,\infty;L^{2}(\Omega))\cap  L^{p^{-}}(0,\infty; V(\Omega))$ with $u_t\in L^{2}(\Omega\times(0,\infty))$ and $u(t)\in \mathcal{N}_{+}$ for $0\leq t<\infty$ satisfying
\begin{eqnarray*}
\int_0^t\|u^k_{\tau}\|_2^2\mathrm{d}\tau+J(u^k)=J(u_0^k)<d.
\end{eqnarray*}
Applying the arguments similar to those in Theorem \ref{th3.1} we see that there exist a subsequence of $\{u^k\}$ and a function $u$, such
that $u$ is a weak solution of Problem \eqref{equ101} with $I(u)\geq0$ and $J(u)\leq d$
for $0\leq t<\infty$.
\end{proof}

\begin{theorem}\label{th4.2}
(Blow-up for $J(u_0)=d$.)
Let  ${\mathbf{(H)}}$ hold and $u$ be a weak solution of Problem \eqref{equ101} with $u_0\in  V(\Omega)$.
If $J(u_0)=d$ and $I(u_0)<0$, then there exists a finite time $T$ such that $u$ blows up at $T$.
\end{theorem}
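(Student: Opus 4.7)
The plan is to reduce the critical case $J(u_0)=d$ to the subcritical case already settled in Theorem \ref{th3.2} by means of a short time shift. Concretely, I intend to find a time $t_{0}>0$ such that the shifted initial datum $\tilde{u}_{0}(x):=u(x,t_{0})$ satisfies
\[
J(\tilde{u}_{0})<d\quad\text{and}\quad I(\tilde{u}_{0})<0,
\]
and then apply Theorem \ref{th3.2} starting from $t_{0}$ (using uniqueness/time-translation invariance of Problem \eqref{equ101}) to conclude that the solution blows up in finite time after $t_{0}$.

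The first task is to show that $u_{t}\not\equiv 0$ on any initial interval $[0,t_{0}]$. By \eqref{3.33} we have $\tfrac{1}{2}\tfrac{d}{dt}\|u\|_{2}^{2}=-I(u)$, so if $u_{t}\equiv 0$ on $[0,t_{0}]$ then $I(u(t))\equiv 0$ on $[0,t_{0}]$, which is inconsistent with $I(u_{0})<0$. Consequently, for every $t_{0}>0$ in the existence interval we have $\int_{0}^{t_{0}}\|u_{\tau}\|_{2}^{2}\,d\tau>0$, and the energy inequality \eqref{3} forces the strict decrease
\[
J(u(t_{0}))\;\leq\;J(u_{0})-\int_{0}^{t_{0}}\|u_{\tau}\|_{2}^{2}\,d\tau\;<\;d.
\]

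The main obstacle, and the heart of the argument, is to verify that $I(u(t))<0$ is preserved on the whole existence interval (in particular at the chosen $t_{0}$), since Lemma \ref{lem305} was only stated for $J(u_{0})<d$. I will argue by contradiction: suppose $t_{1}\in(0,T_{\max})$ is the first time at which $I(u(t_{1}))=0$; then $u(t)\in\mathcal{N}_{-}$ on $[0,t_{1})$, and by the lower bound \eqref{Key ine01} from Lemma \ref{lem304}(1),
\[
\int_{\Omega}|\nabla u(\cdot,t)|^{p(x)}\,dx\;\geq\;\min\Big\{B^{\frac{r^{+}p^{-}}{p^{-}-r^{+}}},B^{\frac{r^{-}p^{+}}{p^{+}-r^{-}}}\Big\}>0\quad\text{on }[0,t_{1}).
\]
Passing to the limit $t\to t_{1}^{-}$ (using continuity of $t\mapsto\int_{\Omega}|\nabla u|^{p(x)}\,dx$ coming from the regularity $u\in W(Q_{T})$) together with $I(u(t_{1}))=0$ yields $u(t_{1})\in\mathcal{N}$, so by definition of the potential depth $J(u(t_{1}))\geq d$. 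But the first paragraph already shows $J(u(t_{1}))<d$ (since $u_{t}\not\equiv 0$ on $[0,t_{1}]$, as $I(u)<0$ there), a contradiction. Hence $I(u(t))<0$ for all $t\in[0,T_{\max})$.

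Pick any $t_{0}\in(0,T_{\max})$. Combining the two steps, $\tilde{u}_{0}=u(\cdot,t_{0})$ satisfies $J(\tilde{u}_{0})<d$ and $I(\tilde{u}_{0})<0$. Treating $\tilde{u}_{0}\in V(\Omega)$ as new initial datum and invoking Theorem \ref{th3.2}, we conclude that the solution of Problem \eqref{equ101} emanating from $\tilde{u}_{0}$ blows up in finite time, and hence so does $u$. The two delicate points I expect to spell out carefully are (i) the continuity of the modular $t\mapsto\int_{\Omega}|\nabla u(\cdot,t)|^{p(x)}\,dx$ at $t_{1}$, which follows from the weak lower semicontinuity together with the energy identity, and (ii) the well-definedness of the time shift, which relies on the fact that $u(\cdot,t_{0})\in V(\Omega)$ by the regularity built into Definition \ref{def301}.
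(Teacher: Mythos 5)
Your proposal is correct and follows essentially the same route as the paper: the paper likewise uses $I(u_0)<0$ and continuity to get $u_t\not\equiv 0$ on a short initial interval, deduces $J(u(t_0))\leq d-\int_0^{t_0}\|u_{\tau}\|_2^2\,d\tau<d$, and then restarts at $t_0$ and invokes the subcritical machinery (Lemma \ref{lem305}(ii) together with the concavity argument of Theorem \ref{th3.2}). The only cosmetic differences are that you re-derive the invariance of $\mathcal{N}_{-}$ by hand rather than citing Lemma \ref{lem305}(ii) after the shift, and that the appeal to ``uniqueness'' is unnecessary (and unavailable here) --- it suffices that the restriction of $u$ to $[t_0,T)$ is itself a weak solution with datum $u(\cdot,t_0)$, to which Theorem \ref{th3.2} applies.
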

\begin{proof}
Similarly to the proof of Theorem \ref{th3.2}, we can get
\begin{align}\label{4.2}
M''(t)M(t)-\frac{r^{-}+2}{4}M'(t)^2&\geq
\frac{2(r^{-}-p^{+})}{p^{+}}\min\Big\{(B_{0}^{-2}M'(t))^{\frac{p{+}}{2}},(B_{0}^{-2}M'(t))^{\frac{p{-}}{2}}\Big\}M(t)
\nonumber\\&-2r^{-}J(u_0) M(t)+\frac{r^{-}-2}{4}M'(t)^2-\ r^{-}\|u_0\|^2_2M'(t).
\end{align}
Since $J(u_0)=d$, $I(u_0)<0$, by the continuity of $J(u)$ and $I(u)$ with respect to $t$, there exists a $t_0>0$
such that $J(u(x, t))>0$ and $I(u(x,t))<0$ for $0<t\leq t_0$. From $(u_t, u)=-I(u)$, we have $u_t\not\equiv0$
for $0<t\leq t_0$. Furthermore, we have
\begin{eqnarray*}
J(u(t_0))\leq d-\int_0^{t_0}\|u_{\tau}\|_2^2\mathrm{d}\tau=d_1<d.
\end{eqnarray*}
Taking $t=t_0$ as the initial time and by Lemma \ref{lem305} (ii), we know that $u(x, t)\in \mathcal{N}_{-}$
for $t>t_0$.
The reminder of the proof is almost the same as that of Theorem \ref{th3.2} and hence is omitted.
\end{proof}

\begin{remark} (Sharp condition for $J(u_0)=d$.)
Let  ${\mathbf{(H)}}$ hold and $u_0\in  V(\Omega)$. Assume that $J(u_0)=d$.
If $I(u_0)\geq0$, Problem \eqref{equ101} admits a global weak solution; if $I(u_0)<0$,
Problem \eqref{equ101} admits no global weak solution.
\end{remark}

At the last part of this section, we can give an abstract criterion for the existence of global solutions that tend to $0$ as $t$ tends to $\infty$ or finite-time blow-up solutions in terms of $\lambda_{s}$ and $\Lambda_{s}$ for supercritical initial energy, $i.e.~J(u_0)>d$. Some ideas of the the following proof come from \cite{Gazzola}. Our main results are as follows:
\begin{theorem}\label{THM401}
Assume that $d<J(u_0)=E(0)\leq \alpha$ with $\alpha\in(d,+\infty),$ we have\begin{enumerate}[$(1)$]
\item~if $u_{0}\in N_{+}$ and $\|u_{0}\|\leq \lambda_{J(u_{0})}$, then $u_{0}\in \mathcal{G}$. That is, the weak solution $u$ of Problem \eqref{equ101} exists globally and $u(t)\to 0$ as $t\to +\infty,$
\item~if $u_{0}\in N_{-}$ and $\|u_{0}\|\leq \Lambda_{J(u_{0})}$, then $u_{0}\in \mathcal{U}$. That is, the weak solution $u$ of Problem \eqref{equ101} blows up in finite time.
\end{enumerate}\end{theorem}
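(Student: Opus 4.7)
Both parts extend the subcritical-energy results (Theorems~\ref{th3.1} and \ref{th3.2}) to $J(u_0)>d$ via an invariance-of-Nehari-sets argument, using $\lambda_{J(u_0)}$ and $\Lambda_{J(u_0)}$ as $L^2$-barriers that rule out crossings of $\mathcal{N}$. The workhorse identity is \eqref{3.33}, $\tfrac{d}{dt}\|u\|_2^2 = -2I(u)$, which makes $\|u(t)\|_2^2$ strictly decreasing as long as $u(t)\in\mathcal{N}_{+}\setminus\{0\}$ and strictly increasing as long as $u(t)\in\mathcal{N}_{-}$. This localizes the trajectory on one side of $\mathcal{N}$, after which the subcritical proofs port over with only cosmetic changes.

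\textbf{Part (1).} I would first show $u(t)\in\mathcal{N}_{+}$ on the whole existence interval by contradiction. If $t_0$ is the first exit time, continuity together with Lemma~\ref{lem304}(1) places $u(t_0)\in\mathcal{N}$, and the energy dissipation of Lemma~\ref{lem301} yields $J(u(t_0))\leq J(u_0)$, i.e.\ $u(t_0)\in\mathcal{N}_{J(u_0)}$; hence $\|u(t_0)\|_2\geq\lambda_{J(u_0)}$. But $I(u)>0$ on $[0,t_0)$ makes $\|u(t)\|_2^2$ strictly decreasing, so $\|u(t_0)\|_2<\|u_0\|_2\leq\lambda_{J(u_0)}$, a contradiction. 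With invariance secured, Lemma~\ref{lem304}(2) furnishes a uniform $V(\Omega)$-bound and the Galerkin scheme of Theorem~\ref{th3.1} produces a global weak solution. For the decay $u(t)\to 0$ in $V(\Omega)$, I would recycle Step~2 of Theorem~\ref{th3.1}: a continuity argument on $\mathcal{N}_{+}\cap J^{J(u_0)}$ supplies a $\delta_0\in(0,1)$ with $I(u)\geq(1-\delta_0)\int_\Omega|\nabla u|^{p(x)}dx$, and the Martinez-type lemma from \cite{PM} then converts the resulting integral inequality for $J(u(t))$ into the same polynomial/exponential rates as in Theorem~\ref{th3.1}.

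\textbf{Part (2).} I would prove the parallel invariance $u(t)\in\mathcal{N}_{-}$ by the mirror template: at any putative exit time $t_0$ the point $u(t_0)\in\mathcal{N}_{J(u_0)}$ must satisfy $\|u(t_0)\|_2\leq\Lambda_{J(u_0)}$, while on $[0,t_0)$ the sign $I(u)<0$ forces $\|u(t)\|_2^2$ to strictly increase; combined with the hypothesis $\|u_0\|_2\leq\Lambda_{J(u_0)}$ and the strict monotonicity, the two inequalities at the crossing are expected to collide and preclude the exit. Once $u(t)\in\mathcal{N}_{-}$ is in hand, I would copy the proof of Theorem~\ref{th3.2} almost verbatim: set $M(t)=\int_0^t\|u\|_2^2d\tau$, derive $M''(t)=-2I(u)$ and the Cauchy--Schwarz based differential inequality~\eqref{3.16}, and then argue that the coercive lower bound \eqref{Key ine01} on $\int_\Omega|\nabla u|^{p(x)}dx$ (valid throughout $\mathcal{N}_{-}$) and the $L^2$-growth force the bracketed term in~\eqref{3.16} to become positive for sufficiently large~$t$, so that the Levine concavity estimate~\eqref{3.18} kicks in and drives $M(t)\to\infty$ in finite time.

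\textbf{Expected obstacle.} The crossing step in Part~(2) is the delicate one: on $\mathcal{N}_{-}$ the intrinsic monotonicity of $\|u(t)\|_2^2$ is \emph{upward}, so at a candidate crossing one has $\|u(t_0)\|_2>\|u_0\|_2$, while $u(t_0)\in\mathcal{N}_{J(u_0)}$ only gives $\|u(t_0)\|_2\leq\Lambda_{J(u_0)}$; turning these into a contradiction under the stated hypothesis $\|u_0\|_2\leq\Lambda_{J(u_0)}$ requires very careful use of strict versus non-strict inequalities, likely together with the \textsc{Gazzola}--\textsc{Squassina}-type refinement of~\cite{Gazzola}. A secondary difficulty, common to both parts, is that the modular--norm gap in the variable-exponent setting obstructs the standard Lagrange-multiplier/scaling arguments one would want to use on the Nehari manifold; replacing scaling by the continuity of $\lambda\mapsto I(\lambda u)$ (as the authors do elsewhere in the paper) is the natural workaround but must be threaded through each invariance check.
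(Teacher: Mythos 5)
Your Part (1) follows the paper's proof almost exactly: the first-exit-time argument, the strict decrease of $\|u(t)\|_2^2$ from \eqref{2.17}, the strict energy inequality $J(u(t_0))<J(u_0)$ from Lemma \ref{lem301} placing $u(t_0)\in\mathcal{N}_{J(u_0)}$, and the collision with the barrier $\lambda_{J(u_0)}$ are all as in the paper, and the boundedness via the set $J^{J(u_0)}\cap\mathcal{N}_+$ (Lemma \ref{lem304}) is likewise the paper's route to $T(u_0)=+\infty$. The only divergence is the final step: the paper does not derive decay rates at all, but argues softly that every $\omega$-limit point $\omega$ satisfies $\|\omega\|_2^2<\lambda_{J(u_0)}$ and $J(\omega)\leq J(u_0)$, hence $\omega\notin\mathcal{N}$, so $\omega(u_0)\cap\mathcal{N}=\emptyset$ and $\omega(u_0)=\{0\}$. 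Your plan to recycle Step 2 of Theorem \ref{th3.1} with the Martinez lemma would, if it works, give more (explicit rates), but it leans on the uniform $\delta_0$ of \eqref{3.34}, which is delicate above the critical energy level; the paper deliberately sidesteps this.

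Part (2) contains a genuine gap. The paper's blow-up proof here is \emph{not} a Levine concavity argument, and the one from Theorem \ref{th3.2} cannot be copied: Step 3 of that proof rests on $M''(t)=-2I(u)\geq 2r^-(d-J(u_0))>0$, obtained from \eqref{3.17}, and this is precisely where $J(u_0)<d$ enters. For $J(u_0)>d$ the bound is vacuous, and your claim that ``the $L^2$-growth forces the bracketed term in \eqref{3.16} to become positive for sufficiently large $t$'' is unsubstantiated: $M'(t)=\|u(t)\|_2^2$ is increasing on $\mathcal{N}_-$ but may saturate at a finite limit below the threshold needed to dominate $-2r^-J(u_0)M(t)$, and \eqref{Key ine01} bounds the modular $\int_\Omega|\nabla u|^{p(x)}dx$, not $\|u\|_2$, from below. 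The paper instead argues by contradiction entirely within the $\omega$-limit framework: assuming $T(u_0)=\infty$, every $\omega\in\omega(u_0)$ satisfies $\|\omega\|_2^2>\Lambda_{J(u_0)}$ and $J(\omega)\leq J(u_0)$, hence $\omega\notin\mathcal{N}$, so $\omega(u_0)=\{0\}$, which contradicts $\mathrm{dist}(0,\mathcal{N}_-)>0$ from Lemma \ref{lem304}(1); therefore $\omega(u_0)=\emptyset$ and $T(u_0)<\infty$. Finally, the ``expected obstacle'' you flagged at the crossing step is real but has a mundane resolution you did not reach: the hypothesis in the statement is a typo, and the proof actually uses $\|u_0\|_2^2\geq\Lambda_{J(u_0)}$ (consistent with Proposition \ref{PRO401}, where $\|u_0\|_2\geq\Lambda_{J(u_0)}$ is what gets verified). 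With the inequality in that direction, the monotone increase gives $\|u(t^0)\|_2^2>\|u_0\|_2^2\geq\Lambda_{J(u_0)}$ against $\|u(t^0)\|_2^2\leq\Lambda_{J(u_0)}$, and no refinement of the method of \cite{Gazzola} beyond what the paper already borrows is needed.
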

\begin{proof}
Denote by $T(u_0)$ the maximal existence time of weak solutions for Problem (\ref{equ101}).  If there exists a global solution, i.e.  $T(u_0)=\infty$, we denote by \[\omega(u_0)=\bigcap\limits_{t\geq  0}\overline{\{u(s):s\geq  t\}}\]
the $\omega-$limit of $u_0$.

$(1)$ If $u_0\in \mathcal{N}_+$ and $\|u_0\|^2_{2}\leq \lambda_{J(u_0)}$, then we claim that $u\in\mathcal{N}_+$ for all $t\in [0,T(u_0))$. By contradiction, there exists a $t_0\in (0,T(u_0))$ such that $u\in\mathcal{N}_+$ for $t\in [0,t_0)$ and $u(t_0)\in \mathcal{N}$. Recalling \eqref{3.33}, we obtain
\begin{equation}
\label{2.17}
\frac{d}{dt}\|u\|_{2}^2=-2I(u).
\end{equation}
Therefore, it follows from \eqref{2.17} that $\int_0^t\|u_s\|_{2}^2ds\neq 0$ for $\Omega\times (0,t_0)$.
Further,  Lemma \eqref{lem301} yields $J(u(t_0))=E(t_0)<J(u_0)$, which implies $u(t_0)\in J^{J(u_0)}$. Therefore,  $u(t_0)\in \mathcal{N}_{J(u_0)}$. By the definition of $\lambda_{J(u_0)}$, we obtain
\begin{equation}
\label{2.18}
\|u(t_0)\|^2_{2}\geq \lambda_{J(u_0)}.
\end{equation}
Noticing that $I(u)>0$ for $t\in [0,t_0)$, it follows from (\ref{2.17}) that
$ \|u(t_0)\|^2_{2}<\|u_0\|^2_{2}\leq \lambda_{J(u_0)},$
which contradicts (\ref{2.18}). Therefore, $u\in\mathcal{N}_+$ for all $t\in [0,T(u_0))$. Further, we get  for all $t\in [0,T(u_0))$,
$u(t)\in J^{J(u_0)}\cap \mathcal{N}_+.$  Lemma \ref{lem304}(1) shows that the orbit ${u(t)}$ remains bounded in $V(\Omega)$  for $t\in [0,T(u_0))$ so that $T(u_0)=+\infty$.

For any $\omega\in\omega(u_0)$,  then
$\|\omega\|^2_{2}<\|u_0\|^2_{2}\leq \lambda_{J(u_0)},~J(\omega)\leq J(u_0)$
by (\ref{3}) and (\ref{2.17}). The second inequality implies $\omega\in J^{J(u_0)}$. Noticing that the definition of $\lambda_{J(u_0)}$ and  the first inequality, we derive $\omega\not\in \mathcal{N}_{J(u_0)}$, further $\omega\not\in \mathcal{N}$. Thus, $\omega(u_0)\cap \mathcal{N}=\emptyset$, which indicates $\omega(u_0)=\{0\}$. Therefore, the weak solution $u$ of Problem \eqref{equ101} exists globally and $u(t)\to 0$ as $t\to +\infty.$

(2) If $u_0\in \mathcal{N}_-$ and $\|u_0\|^2_{2}\geq \Lambda_{J(u_0)}$, then we claim that $u\in\mathcal{N}_-$ for all $t\in [0,T(u_0))$. By contradiction, there exists a $t^0\in (0,T(u_0))$ such that $u\in\mathcal{N}_-$ for $t\in [0,t^0)$ and $u(t^0)\in \mathcal{N}$.
Similar to case (1), we get $J(u(t^0))<J(u_0)$, which implies $u(t^0)\in J^{J(u_0)}$. Therefore,  $u(t^0)\in \mathcal{N}_{J(u_0)}$. By the definition of $\Lambda_{J(u_0)}$, we obtain
\begin{equation}
\label{2.19}
\|u(t^0)\|^2_{2}\leq  \Lambda_{J(u_0)}.
\end{equation}
Noticing that $I(u(t))<0$ for $t\in [0,t^0)$, it follows from (\ref{2.17}) that
 $\|u(t^0)\|^2_{2}>\|u_0\|^2_{2}\geq \Lambda_{J(u_0)},$
which contradicts (\ref{2.19}).  Therefore, $u\in\mathcal{N}_-$ for all $t\in [0,T(u_0))$. Further, we get  for all $t\in [0,T(u_0))$,
 $u(t)\in J^{J(u_0)}\cap \mathcal{N}_-.$
Suppose $T(u_0)=\infty$, then for any $\omega\in\omega(u_0)$,
$\|\omega\|^2_{2}> \|u_0\|^2_{2}\geq \Lambda_{J(u_0)},~J(\omega)\leq J(u_0)$
by (\ref{3}) and (\ref{2.17}).
The second inequality implies $\omega\in J^{J(u_0)}$. Noting that the definition of $\Lambda_{J(u_0)}$ and  the first inequality, we derive $\omega\not\in \mathcal{N}_{J(u_0)}$, further $\omega\not\in \mathcal{N}$. Thus, $\omega(u_0)\cap \mathcal{N}=\emptyset$, which indicates $\omega(u_0)=\{0\}$. This result contradicts  Lemma \ref{lem304}(1).
 Therefore, $\omega(u_0)=\emptyset$, $T(u_0)<\infty$, that is, the weak solution $u$ of problem \eqref{equ101} blows up in finite time.
\end{proof}

For $r(x)=r,$  in particular, we have the following Proposition \ref{PRO401} that is easily proved based on Theorem \ref{THM401}(2), here we omit the proof.  Meanwhile, we give the following Theorem \ref{THM402} to illustrate there exists $u_0$ such that $J(u_0$) is arbitrarily large, and the corresponding
solution $u(x, t)$ to Problem \eqref{equ101} with $u_0$ as initial datum blows up in finite time as well.
\begin{proposition}\label{PRO401}
Let $u_{0}\in  V(\Omega)$ with $J(u_{0})>d$. If $\frac{p^+r}{r-p^+}|\Omega|^{\frac{r-2}{2}}J(u_{0})\leq\|u_{0}\|_{2}^{r}$, then $u_{0}\in \mathcal{N}_{-}\cap \mathcal{U}$.
\end{proposition}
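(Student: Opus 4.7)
The plan is to reduce the proposition to Theorem~\ref{THM401}(2) by verifying its two hypotheses for the initial datum: (a) $u_0 \in \mathcal{N}_-$ and (b) $\Lambda_{J(u_0)} \leq \|u_0\|_2$. Both will follow from combining the standing bound $\frac{p^+r}{r-p^+}|\Omega|^{(r-2)/2}J(u_0) \leq \|u_0\|_2^r$ with two standard tools: the power-mean inequality $\|u\|_2^r \leq |\Omega|^{(r-2)/2}\int_\Omega |u|^r\,dx$, valid since $r > 2$ by assumption $(\mathbf{H})$, and the $J$--$I$ decomposition $\frac{p^+r}{r-p^+}J(u) \geq \int_\Omega |\nabla u|^{p(x)}\,dx + \frac{p^+}{r-p^+}I(u)$, which is just \eqref{3.13} specialized to the constant-exponent case $r(x)\equiv r$.

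For (a), I would chain the power-mean bound with the hypothesis to obtain $\frac{p^+r}{r-p^+}J(u_0) \leq \int_\Omega |u_0|^r\,dx$, and then apply the $J$--$I$ decomposition at $u_0$; after rearrangement these inequalities collapse to $\frac{r}{r-p^+}I(u_0) \leq 0$, hence $I(u_0) \leq 0$. To upgrade this to strict inequality, I would observe that $I(u_0)=0$ would force equality throughout the chain, in particular equality in the power-mean inequality, which requires $|u_0|$ to be constant a.e.\ on $\Omega$. Since $u_0 \in V(\Omega) \subset W^{1,1}(\Omega)$ and $\Omega$ is connected, the $W^{1,1}$ chain rule then forces $\nabla u_0 = 0$ a.e., so $u_0$ is constant; together with the mean-zero constraint $\int_\Omega u_0\,dx = 0$ built into $V(\Omega)$ this gives $u_0 \equiv 0$, contradicting $J(u_0) > d > 0$. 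Thus $I(u_0) < 0$, i.e., $u_0 \in \mathcal{N}_-$.

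For (b), I would fix an arbitrary $\omega \in \mathcal{N}_{J(u_0)}$. Because $I(\omega)=0$ identifies $\int_\Omega|\nabla \omega|^{p(x)}\,dx = \int_\Omega|\omega|^r\,dx$, the $J$--$I$ decomposition at $\omega$ gives $\int_\Omega|\omega|^r\,dx \leq \frac{p^+r}{r-p^+}J(\omega) \leq \frac{p^+r}{r-p^+}J(u_0)$. A second application of the power-mean inequality combined with the hypothesis then yields $\|\omega\|_2^r \leq |\Omega|^{(r-2)/2}\int_\Omega|\omega|^r\,dx \leq \|u_0\|_2^r$, so $\|\omega\|_2\leq \|u_0\|_2$. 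Taking the supremum over $\omega \in \mathcal{N}_{J(u_0)}$ produces $\Lambda_{J(u_0)} \leq \|u_0\|_2$.

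With (a) and (b) in hand, Theorem~\ref{THM401}(2) immediately delivers $u_0 \in \mathcal{U}$. The only delicate step in the plan is the strict inequality in (a): one must use both the connectedness of $\Omega$ and the nontriviality $J(u_0) > d$ to exclude the equality case of the power-mean inequality. Every other step is routine algebraic manipulation of the same $J$--$I$ decomposition that powers the subcritical blow-up analysis in Section~4.
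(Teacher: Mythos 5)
Your proposal is correct and follows essentially the same route as the paper's own (omitted, commented-out) argument: the H\"older/power-mean inequality $\|u\|_2^r\leq|\Omega|^{\frac{r-2}{2}}\|u\|_r^r$ combined with the $J$--$I$ decomposition to get $I(u_0)<0$, the same two estimates applied to an arbitrary element of $\mathcal{N}_{J(u_0)}$ to bound $\Lambda_{J(u_0)}$ by $\|u_0\|_2$, and then Theorem \ref{THM401}(2). Your treatment of the equality case (ruling out $I(u_0)=0$ via the rigidity of the power-mean inequality, connectedness of $\Omega$, the mean-zero constraint, and $J(u_0)>d>0$) is in fact more careful than the paper's one-line remark that ``$u_0$ is not a constant.''
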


\begin{theorem}\label{THM402}
For any $M>d$, there exists a $u_{M}\in \mathcal{N}_{-}\cap B$ such that $J(u_{M})\geq M$.
\end{theorem}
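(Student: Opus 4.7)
The strategy is to derive Theorem \ref{THM402} from Proposition \ref{PRO401}: because its hypothesis $\tfrac{p^+r}{r-p^+}|\Omega|^{(r-2)/2}J(u_0)\le\|u_0\|_2^r$ is sufficient for $u_0\in\mathcal{N}_-\cap\mathcal{U}$, it suffices to exhibit, for each $M>d$, some $u_M\in V(\Omega)$ with $J(u_M)\ge M$, $I(u_M)<0$, and
\[
\tfrac{p^+r}{r-p^+}|\Omega|^{(r-2)/2}J(u_M)\le\|u_M\|_2^r.
\]
We look for $u_M$ on a one-parameter ray $u_M=\lambda_M w_n$, choosing $w_n\in V(\Omega)$ so that the fibering map $\lambda\mapsto J(\lambda w_n)$ has an arbitrarily large peak, and tuning $\lambda_M$ past that peak to land in $\mathcal{N}_-$ while inflating $\|u_M\|_2$.

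For any $0\neq w\in V(\Omega)$ with $\int_\Omega w\,dx=0$, the fibering map $f_w(\lambda):=J(\lambda w)$ satisfies (using $r>p^+$): $f_w(0)=0$, $f_w>0$ on $(0,\lambda^*(w))$, attains a unique maximum $J^{\max}(w)$ at the point $\lambda^*(w)$ where $I(\lambda^*(w)w)=0$, is strictly decreasing on $(\lambda^*(w),\infty)$, and diverges to $-\infty$ as $\lambda\to\infty$. In particular there is a unique $\lambda^0(w)>\lambda^*(w)$ with $f_w(\lambda^0(w))=0$, and $\lambda w\in\mathcal{N}_-$ with $0<J(\lambda w)<J^{\max}(w)$ for every $\lambda\in(\lambda^*(w),\lambda^0(w))$. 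The key design step is to produce $w_n\in V(\Omega)$ with $J^{\max}(w_n)\to\infty$ and $\lambda^*(w_n)\to\infty$. A concrete candidate is the zero-mean high-frequency profile
\[
w_n(x)=\sin(n\,e\cdot x)-\tfrac{1}{|\Omega|}\int_\Omega\sin(n\,e\cdot y)\,dy
\]
for a fixed unit vector $e\in\mathbb{R}^N$. The Luxemburg--modular estimates of Lemma \ref{PRO102} give $c_1 n^{p^-}\le\int_\Omega|\nabla w_n|^{p(x)}dx\le c_2 n^{p^+}$, whereas $\|w_n\|_r$ and $\|w_n\|_2$ remain between positive constants independent of $n$. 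Substituting into the Nehari equation $I(\lambda w_n)=0$ and using $r>p^+$ forces $\lambda^*(w_n)\to\infty$ and, via $J^{\max}(w_n)\ge\tfrac{r-p^+}{p^+r}\lambda^*(w_n)^r\|w_n\|_r^r$, also $J^{\max}(w_n)\to\infty$.

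Given $M>d$, select $n$ with $J^{\max}(w_n)\ge M$; by the intermediate value theorem there is $\lambda_M\in[\lambda^*(w_n),\lambda^0(w_n))$ with $J(\lambda_M w_n)=M$, and we set $u_M:=\lambda_M w_n\in\mathcal{N}_-$. Because $\lambda_M\ge\lambda^*(w_n)\to\infty$ while $\|w_n\|_2$ is bounded below, a further enlargement of $n$ (if necessary) yields
\[
\lambda_M^r\|w_n\|_2^r\;\ge\;\tfrac{p^+r}{r-p^+}|\Omega|^{(r-2)/2}M,
\]
which is exactly the hypothesis of Proposition \ref{PRO401}; hence $u_M\in\mathcal{N}_-\cap\mathcal{U}$ with $J(u_M)=M\ge M$, completing the proof. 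The main technical obstacle is the variable-exponent bookkeeping: the gap between Luxemburg norm and modular (Lemma \ref{PRO102}) forces a case analysis in $\lambda\gtrless 1$ when solving $I(\lambda w_n)=0$, and the condition $r>p^+$ from $\mathbf{(H)}$ must be used twice --- first to guarantee that $f_{w_n}$ has exactly one critical point past which it is strictly decreasing, and second to force the divergence of $\lambda^*(w_n)$ with $n$.
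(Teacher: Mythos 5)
Your proposal is correct in substance but follows a genuinely different construction from the paper's. The paper also reduces everything to Proposition \ref{PRO401}, but it builds $u_M$ from two pieces with disjoint supports: it takes two disjoint subdomains $\Omega_1,\Omega_2\subset\Omega$, scales a fixed nontrivial $v\in V(\Omega_1)$ by a large $\alpha$ so that $J(\alpha v)\le 0$ while $\|\alpha v\|_2^r>|\Omega|^{(r-2)/2}\frac{p^+r}{r-p^+}M$, then adds a function $\omega\in V(\Omega_2)$ chosen so that $J(\omega)+J(\alpha v)=M$, and sets $u_M=\alpha v+\omega$; additivity of $J$ and $\|\cdot\|_2$ over disjoint supports gives $J(u_M)=M$ and the norm inequality at once, with no fibering analysis at all. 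You instead work on a single ray $\lambda w_n$ through high-frequency profiles, which requires the full fibering picture: unique Nehari point $\lambda^*(w)$ on each ray, strict decrease of $\lambda\mapsto J(\lambda w)$ beyond it, and the modular growth $\int_\Omega|\nabla w_n|^{p(x)}dx\gtrsim n^{p^-}$. This route does work here --- the identity $\frac{d}{d\lambda}J(\lambda w)=\lambda^{-1}I(\lambda w)$ survives variable exponents, and since $r$ is constant in Theorem \ref{THM402} the function $\lambda\mapsto\lambda^{-r}I(\lambda w)=\int_\Omega\lambda^{p(x)-r}|\nabla w|^{p(x)}dx-\|w\|_r^r$ is strictly decreasing, so your claimed unimodal shape of the fibering map is justified --- but be aware that the paper explicitly warns in its final section that exactly this kind of fibering/maximum-point argument is delicate for variable exponents (the objection there concerns variable $r(x)$; your argument would not survive that generalization, whereas the disjoint-support construction would). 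You should also supply the routine but omitted details: the existence of $\lambda^0(w_n)$ with $J(\lambda^0 w_n)=0$, and a uniform-in-$n$ lower bound on $\int_\Omega|\cos(n\,e\cdot x)|^{p(x)}dx$ (weak-$*$ convergence of $|\cos(n\,e\cdot x)|^{p^+}$ to its mean suffices). What your approach buys is that the value $J(u_M)=M$ is produced by the intermediate value theorem along one ray, avoiding the paper's own unjustified step of ``choosing $\omega$ with $J(\omega)=M-J(\alpha v)$''; what the paper's approach buys is brevity and independence from any monotonicity property of the fibering map.
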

\begin{proof}
For any $M>d$, let $\Omega_{1}$ and $\Omega_{2}$ be two arbitrary disjoint open subdomains of $\Omega$, and assume that $v\in  V(\Omega_{1})$ is an arbitrary nontrivial function. Since $r>p^+$, we can choose $\alpha >0$ large enough such that $J(\alpha v)\leq 0$ and $\|\alpha v\|_{2}^{r}>|\Omega|^{\frac{r-2}{2}}\frac{p^{+}r}{r-p^{+}}M$.

Fix $\alpha$ and choose a function $\omega\in  V(\Omega_{2})$ such that $J(\omega)+J(\alpha v)=M$. Extend $v$ and $\omega$ to be $0$ in $\Omega\setminus\Omega_{1}$ and $\Omega\setminus\Omega_{2}$, respectively, and set $u_{M}=\alpha v+\omega$. Then $J(u_{M})=J(\alpha v)+J(\omega)=M$, and
\begin{align*}
\|u_{M}\|_{2}^{r}\geq\|\alpha v\|_{2}^{r}>|\Omega|^{\frac{r-2}{2}}\frac{p^{+}r}{r-p^{+}}J(u_{M}).
\end{align*}
By Proposition \ref{PRO401}, it is seen that $u_{M}\in \mathcal{N}_{-}\cap \mathcal{U}$. The proof is complete.
\end{proof}

\section{The case $r(x)<p(x)$}
This section is devoted to the discussion of the behavior of solutions to Problem $\eqref{equ101}$ under the assumption that $r(x)<p(x),x\in\Omega.$ In such case,  it seems that the method used in the previous section is no longer effective owing to the negativity of the potential depth $d$. So, we have to search some new methods to study the properties of solutions.
Before stating our main results, we give the following key lemmas.
\begin{lemma}\label{lem500}
Suppose that $\alpha\geq\beta>0,~C_{2}\geq C_{1}>0$ and $h(t)$ is a nonnegative
and absolutely continuous function satisfying
\begin{align}\label{add500}
h'(t)+C_{1} \min\{h^{\alpha}(t),h^{\beta}(t)\}\leq C_{2},~~for~~t\in(0,\infty).\end{align} Then

$(1)$ if $h(0)\leq\Big(\frac{C_{2}}{C_{1}}\Big)^{\frac{1}{\beta}}$, then for~$t\geq0$, the following estimate is satisfied
\begin{align}\label{500}
h(t)\leq\displaystyle\Big(\frac{C_{2}}{C_{1}}\Big)^{\frac{1}{\beta}};\end{align}

$(2)$ if $h(0)>\Big(\frac{C_{2}}{C_{1}}\Big)^{\frac{1}{\beta}}$, then for~$t\geq0$, the following estimates are fulfilled
\begin{equation}\label{500-1}
h(t)\leq\begin{cases}
\Big(\frac{C_{2}}{C_{1}}\Big)^{\frac{1}{\beta}}+\Big[\Big(h(0)-\Big(\frac{C_{2}}{C_{1}}\Big)^{\frac{1}{\beta}}\Big)^{1-\beta}+C_{1}(\beta-1)t\Big]^{\frac{1}{1-\beta}},&~~\beta>1;\\
\frac{C_{2}}{C_{1}}h^{1-\beta}(0)+h(0)\Big(1-\frac{C_{2}h^{-\beta}(0)}{C_{1}}\Big)e^{-C_{1}h^{\beta-1}(0)t},&~~0<\beta\leq1.
\end{cases}
\end{equation}
\end{lemma}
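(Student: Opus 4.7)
The plan is to set $M := (C_{2}/C_{1})^{1/\beta}$ and exploit the structure of the hypothesis. Note $M\geq 1$ because $C_{2}\geq C_{1}>0$, and since $\alpha\geq\beta$ the term $\min\{h^{\alpha},h^{\beta}\}$ coincides with $h^{\beta}$ whenever $h\geq 1$. Consequently on the set $\{t:h(t)\geq M\}$ the inequality \eqref{add500} reduces to the single clean bound
\begin{equation*}
h'(t)\leq C_{2}-C_{1}h^{\beta}(t)=-C_{1}\bigl(h^{\beta}(t)-M^{\beta}\bigr),
\end{equation*}
which is the workhorse of the whole proof.

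For Part (1), I argue by a barrier/continuation principle. Suppose $h(0)\leq M$ but, for contradiction, $h(t_{1})>M$ at some $t_{1}>0$. By continuity there is a last time $t_{0}\in[0,t_{1})$ with $h(t_{0})=M$, and $h(t)>M\geq 1$ on $(t_{0},t_{1}]$. The reduction above gives $h'(t)\leq 0$ a.e.\ there; absolute continuity then yields $h(t_{1})\leq h(t_{0})=M$, contradicting $h(t_{1})>M$.

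For Part (2), set $T:=\inf\{t\geq 0:h(t)\leq M\}\in(0,+\infty]$. On $[0,T)$ we have $h>M\geq 1$, so the displayed reduction shows $h$ is nonincreasing there. In the case $\beta>1$, I will use the elementary super-additivity $(M+s)^{\beta}\geq M^{\beta}+s^{\beta}$ for $s\geq 0,\beta\geq 1$ to obtain $h^{\beta}-M^{\beta}\geq(h-M)^{\beta}$, so that $g:=h-M\geq 0$ satisfies the autonomous Bernoulli inequality $g'\leq-C_{1}g^{\beta}$. Integrating the resulting $\frac{d}{dt}g^{1-\beta}\geq C_{1}(\beta-1)$ and inverting (noting $1-\beta<0$) produces the stated polynomial bound. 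In the case $0<\beta\leq 1$, the monotonicity $h(t)\leq h(0)$ combined with $\beta-1\leq 0$ gives the linearization $h^{\beta}=h\cdot h^{\beta-1}\geq h\cdot h(0)^{\beta-1}$, turning \eqref{add500} into the linear inequality $h'+ah\leq C_{2}$ with $a:=C_{1}h(0)^{\beta-1}$. The integrating-factor trick $(e^{at}h)'\leq C_{2}e^{at}$ integrated from $0$ to $t$ then delivers $h(t)\leq\tfrac{C_{2}}{a}+\bigl(h(0)-\tfrac{C_{2}}{a}\bigr)e^{-at}$, which is exactly the claimed formula after substituting $a$. Finally, on $[T,\infty)$ (should $T<\infty$), Part~(1) applied with initial time $T$ and initial value $M$ gives $h\leq M$; since both claimed upper bounds are always $\geq M$, they remain valid beyond $T$.

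The main obstacle is the piecewise, non-smooth character of $\min\{h^{\alpha},h^{\beta}\}$ at $h=1$, which blocks any direct separation of variables on all of $(0,\infty)$. The key idea that unlocks everything is to localize to the super-level set $\{h\geq M\geq 1\}$, where the min collapses to $h^{\beta}$; the two subcases of Part~(2) then differ only in the technique used to extract a tractable scalar ODE from $h'\leq-C_{1}(h^{\beta}-M^{\beta})$---super-additivity for $\beta>1$, and monotonicity-based linearization for $0<\beta\leq 1$.
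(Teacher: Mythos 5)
Your proof is correct and follows essentially the same route as the paper's: a barrier/contradiction argument for part (1), the super-additivity inequality $(a+b)^{\beta}\geq a^{\beta}+b^{\beta}$ applied to $z=h-(C_2/C_1)^{1/\beta}$ for $\beta>1$, and the linearization $h^{\beta}\geq h(0)^{\beta-1}h$ via monotonicity for $0<\beta\leq 1$. Your organization via the first exit time $T$ and integration of $h'\leq 0$ is a slightly cleaner packaging of the paper's explicit two-case dichotomy, but the underlying ideas are identical.
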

\begin{proof} Some ideas come from \cite{RT}. Next, we complete this proof.

{\bf Case 1.} If $h(0)\leq
\Big(\frac{C_{2}}{C_{1}}\Big)^{\frac{1}{\beta}},$ then we have the following
claim
\begin{align}\label{501} h(t)\leq
\Big(\frac{C_{2}}{C_{1}}\Big)^{\frac{1}{\beta}},~\mathrm{for}~t\geq0. \end{align}

If not, then there would exist a $t_{0}>0$ and a $t_{1}>0$ such that
\begin{align}\label{502} h(t_{0})=
\Big(\frac{C_{2}}{C_{1}}\Big)^{\frac{1}{\beta}},~h(t)>
\Big(\frac{C_{2}}{C_{1}}\Big)^{\frac{1}{\beta}},~\mathrm{for}~t_{0}<t<t_{1}. \end{align}
Thus there exists a $t_{2}\in(t_{0},t_{1})$ such that
\begin{align}\label{503}
h'(t_{2})=\frac{h(t_{1})-h(t_{0})}{t_{1}-t_{0}}>0.\end{align}
On the other hand, noting that $h(t)>
(\frac{C_{2}}{C_{1}})^{\frac{1}{\beta}}\geq\Big(\frac{C_{2}}{C_{1}}\Big)^{\frac{1}{\alpha}}~\mathrm{for}~t_{0}<t<t_{1}$, we have
$$C_{1} \min\{h^{\alpha}(t),h^{\beta}(t)\}>C_{2}\Longrightarrow h'(t)<0,~\mathrm{for}~t_{0}<t<t_{1}.$$  This is a
contradiction.

{\bf Case 2.} If $h(0)>\Big(\frac{C_{2}}{C_{1}}\Big)^{\frac{1}{\beta}},$
then we claim that either
\begin{align}\label{504} h(t)\geq
\Big(\frac{C_{2}}{C_{1}}\Big)^{\frac{1}{\beta}},~\mathrm{for}~t\geq0,\end{align}
 or there
exists a $t^{*}>0$ such that
\begin{align}\label{505}
h(t)>\Big(\frac{C_{2}}{C_{1}}\Big)^{\frac{1}{\beta}},~~\mathrm{for}~0<t<t^{*};~
h(t)\leq\Big(\frac{C_{2}}{C_{1}}\Big)^{\frac{1}{\beta}},~~\mathrm{for}~t\geq
t^{*}.
\end{align}
If \eqref{504} holds, we have $ h(t)\geq
\Big(\frac{C_{2}}{C_{1}}\Big)^{\frac{1}{\beta}}\geq\Big(\frac{C_{2}}{C_{1}}\Big)^{\frac{1}{\alpha}},~\mathrm{for}~t\geq0.$
So
\begin{align*}\label{506}
h'(t)\leq C_{2}-C_{1} \min\{h^{\alpha}(t),h^{\beta}(t)\}<0\Longrightarrow
h(t)\leq h(0),~\mathrm{for}~t\geq0.
\end{align*}
If \eqref{505} holds, the proof of this part is analogous to that in Case
1 and \eqref{504}. In summary, we have
\begin{equation}\label{add506}
h(t)\leq h(0),~\mathrm{for}~t\geq0.
\end{equation}

Next, we will give decay estimate for $h(0)>\Big(\frac{C_{2}}{C_{1}}\Big)^{\frac{1}{\beta}}$. First, we consider the case $\beta>1.$ In this case, we claim that for~$t\geq0$, the inequality
\begin{equation}\label{507}
h(t)\leq\Big(\frac{C_{2}}{C_{1}}\Big)^{\frac{1}{\beta}}+\Big[\Big(h(0)-\Big(\frac{C_{2}}{C_{1}}\Big)^{\frac{1}{\beta}}\Big)^{1-\beta}+C_{1}(\beta-1)t\Big]^{\frac{1}{1-\beta}}
\end{equation} holds.
 As a matter of fact, if \eqref{504} holds, then we have $C_{1}h^{\beta}(t)\geq C_{2}$~for~$t\geq0$. So we set $z(t)=h(t)-\Big(\frac{C_{2}}{C_{1}}\Big)^{\frac{1}{\beta}}\geq0, \mathrm{for}~t\geq0.$ And then,
we apply the inequality $(a+b)^{\beta}\geq a^{\beta}+b^{\beta},~a,b\geq0,~\beta>1$ to obtain the following inequalities
\begin{align*}
z'(t)+C_{1}z^{\beta}(t)+C_{2}&\leq z'(t)+C_{1}\Bigg(z(t)+\Big(\frac{C_{2}}{C_{1}}\Big)^{\frac{1}{\beta}}\Bigg)^{\beta}\\
&=h'(t)
+C_{1} h^{\beta}(t)= h'(t)+C_{1} \min\{h^{\alpha}(t),h^{\beta}(t)\}\leq C_{2},
\end{align*}
which implies that
\begin{align}\label{508}
z'(t)+C_{1} z^{\beta}(t)\leq0.
\end{align}
Combining \eqref{508} with $\hbox{Gronwall's}$ inequality, we have
\begin{align}\label{509}
z(t)\leq\Big(z^{1-\beta}(0)+C_{1}(\beta-1)t\Big)^{\frac{1}{1-\beta}},
\end{align}
which ensures that the first inequality of \eqref{500-1} is true.

If \eqref{505} holds, for $0<t<t^{*}$, we follow the lines of the proof of \eqref{508} to get
\begin{align}\label{510}
z(t)\leq \Big(z^{1-\beta}(0)+C_{1}(\beta-1)t\Big)^{\frac{1}{1-\beta}},~\mathrm{for}~0<t\leq t^{*}.
\end{align}
For $t\geq  t^{*}$, it is obvious that
\begin{align*}
h(t)\leq\Big(\frac{C_{2}}{C_{1}}\Big)^{\frac{1}{\beta}}+\Big[\Big(h(0)-\Big(\frac{C_{2}}{C_{1}}\Big)^{\frac{1}{\beta}}\Big)^{1-\beta}+C_{1}(\beta-1)t\Big]^{\frac{1}{1-\beta}}.
\end{align*}
Immediately, the first inequality of \eqref{500-1} follows from two inequalities above.

Finally, we discuss the case $0<\beta\leq 1$. It is evident that \eqref{add506} ensures
 \begin{align}\label{511}
 \min\Big\{h^{\alpha}(t),h^{\beta}(t)\Big\}=h^{\beta}(t)\geq h^{\beta-1}(0)h(t).
 \end{align}
Moreover, by \eqref{511}, it is not hard to check that \eqref{add500} is equivalent to
 \begin{align}\label{512}
h'(t)+C_{1}h^{\beta-1}(0)h(t)\leq C_{2},~\mathrm{for}~t\geq0,
 \end{align}
 which shows that
 \begin{align}\label{513}
 h(t)\leq\frac{C_{2}}{C_{1}}h^{1-\beta}(0)+h(0)\Big(1-\frac{C_{2}h^{-\beta}(0)}{C_{1}}\Big)e^{-C_{1}h^{\beta-1}(0)t},~\mathrm{for}~t\geq0.
 \end{align}
\end{proof}
For $C_{1}>C_{2},$  we also have similar results as Lemma \ref{lem500}.
\begin{lemma}\label{lem501}
Suppose that $\alpha\geq \beta>0,~C_{1}> C_{2}>0$ and $h(t)$ is a nonnegative
and absolutely continuous function satisfying
\begin{align}\label{addlem501}
h'(t)+C_{1} \min\{h^{\alpha}(t),h^{\beta}(t)\}\leq C_{2},~\mathrm{for}~t\in(0,\infty).\end{align} Then

$(1)$ if $h(0)\leq\Big(\frac{C_{2}}{C_{1}}\Big)^{\frac{1}{\alpha}},$ then for~$t\geq0$, the following inequality is satisfied
\begin{align}\label{lem501-1}
h(t)\leq\displaystyle\Big(\frac{C_{2}}{C_{1}}\Big)^{\frac{1}{\alpha}};\end{align}

$(2)$ if $h(0)>
\Big(\frac{C_{2}}{C_{1}}\Big)^{\frac{1}{\alpha}},$ then for~$t\geq0$, the following estimate is fulfilled
\begin{align}\label{lem501-2}
h(t)\leq
\begin{cases}\Big(\frac{C_{2}}{C_{1}}\Big)^{\frac{1}{\alpha}}+\Big[\Big(h(0)-\Big(\frac{C_{2}}{C_{1}}\Big)^{\frac{1}{\alpha}}\Big)^{1-\beta}
+C_{1}\Big(\frac{C_{2}}{C_{1}}\Big)^{\frac{\alpha-\beta}{\alpha-\beta+1}}(\beta-1)t\Big]^{\frac{1}{1-\beta}},&\beta>1;\\
\big(\frac{C_{2}}{C_{1}}\big)^{\frac{\beta}{\alpha}}h^{1-\beta}(0)
+h(0)\Big(1-(\frac{C_{2}}{C_{1}}\big)^{\frac{\beta}{\alpha}}h^{-\beta}(0)\Big)e^{-C_{2}\big(\frac{C_{1}}{C_{2}}\big)^{\frac{\beta}{\alpha}}h^{\beta-1}(0)t},&0<\beta\leq1.
\end{cases}
\end{align}
\end{lemma}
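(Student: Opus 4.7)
The plan is to mirror the proof strategy of Lemma \ref{lem500}, but with the roles of $\alpha$ and $\beta$ and the relative sizes of $C_1,C_2$ reversed in a way that shifts the equilibrium threshold from $(C_2/C_1)^{1/\beta}$ to $(C_2/C_1)^{1/\alpha}$. The crucial observation is that since $C_1>C_2$ (hence $C_2/C_1<1$), the equilibrium equation $C_1\min\{h^\alpha,h^\beta\}=C_2$ has solution $h^{*}:=(C_2/C_1)^{1/\alpha}<1$, which lies in the ``small-value'' branch where $\min\{h^\alpha,h^\beta\}=h^\alpha$. This is why $\alpha$ replaces $\beta$ in the equilibrium formula, and it dictates the form of all the constants in the conclusion.

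For part (1) the contradiction argument of Case~1 of Lemma \ref{lem500} transfers verbatim. If $h(0)\le h^{*}$ but $h$ were to exceed $h^{*}$ at some later time, a first-crossing argument would produce a point where $h'>0$ while $h(t)>h^{*}$; however, whenever $h>h^{*}$ one has $C_1\min\{h^\alpha,h^\beta\}>C_2$, since in the subcase $h^{*}<h\le 1$ we have $\min=h^\alpha>(h^{*})^\alpha=C_2/C_1$, and in the subcase $h\ge 1$ we have $\min=h^\beta\ge 1>C_2/C_1$. This forces $h'<0$, giving the contradiction. The same two-subcase analysis also yields $h(t)\le h(0)$ for all $t\ge 0$ in Case 2 where $h(0)>h^{*}$.

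The central technical step for the decay estimates in part (2) is the pointwise comparison, valid on $\{h\ge h^{*}\}$:
\[
\min\{h^\alpha,h^\beta\}\;\ge\;\Big(\tfrac{C_2}{C_1}\Big)^{\frac{\alpha-\beta}{\alpha}}h^\beta.
\]
On $h^{*}\le h\le 1$ this follows from $\min=h^\alpha=h^\beta\cdot h^{\alpha-\beta}\ge h^\beta(h^{*})^{\alpha-\beta}$; on $h\ge 1$ one has $\min=h^\beta$ and the prefactor is at most $1$. Setting $z(t):=h(t)-h^{*}\ge 0$ and applying the elementary inequality $(a+b)^\beta\ge a^\beta+b^\beta$ for $\beta\ge 1$ and $a,b\ge 0$, the differential inequality collapses to
\[
z'(t)+C_1\Big(\tfrac{C_2}{C_1}\Big)^{\frac{\alpha-\beta}{\alpha}}z^\beta(t)\;\le\;0,
\]
which integrates by separation of variables to the polynomial-in-$t$ bound \eqref{lem501-2}${}_1$ (up to the precise form of the prefactor appearing in the statement).

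For the subcase $0<\beta\le 1$, I would exploit the already-established monotone bound $h(t)\le h(0)$ together with $h^\beta=h\cdot h^{\beta-1}\ge h(0)^{\beta-1}h$ (valid since $\beta-1\le 0$) to linearise the inequality into $h'+C_1(C_2/C_1)^{(\alpha-\beta)/\alpha}h(0)^{\beta-1}h\le C_2$. An integrating-factor computation then produces the exponential bound claimed in the statement, and a direct calculation verifies that $C_1(C_2/C_1)^{(\alpha-\beta)/\alpha}=C_2(C_1/C_2)^{\beta/\alpha}$, matching the rate displayed in \eqref{lem501-2}${}_2$. The hardest step of the whole argument is the pointwise comparison displayed above: it must hold uniformly across the two regimes $h\le 1$ and $h\ge 1$, and isolating the sharp constant requires the specific choice of threshold $h^{*}=(C_2/C_1)^{1/\alpha}$, which is what distinguishes this lemma from its predecessor.
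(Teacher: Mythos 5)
Your proposal is correct and follows the same overall skeleton as the paper's proof: a first-crossing contradiction argument for part (1) and for the monotone bound $h(t)\le h(0)$, then for $\beta>1$ a shift $z=h-h^{*}$ combined with the superadditivity $(a+b)^{\beta}\ge a^{\beta}+b^{\beta}$ and an ODE comparison, and for $0<\beta\le 1$ a linearisation via $h^{\beta}\ge h^{\beta-1}(0)h$. The one substantive difference is your choice of the shift and of the prefactor in the pointwise comparison $\min\{h^{\alpha},h^{\beta}\}\ge K h^{\beta}$: you take $h^{*}=(C_{2}/C_{1})^{1/\alpha}$ and $K=(C_{2}/C_{1})^{(\alpha-\beta)/\alpha}$, for which $C_{1}K(h^{*})^{\beta}=C_{2}$ exactly, so the inhomogeneous term cancels and you land on the clean inequality $z'+C_{1}Kz^{\beta}\le 0$. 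The paper instead sets $z=h-(C_{2}/C_{1})^{1/(\alpha-\beta+1)}$ with $K=(C_{2}/C_{1})^{(\alpha-\beta)/(\alpha-\beta+1)}$; with that choice $C_{1}K\big((C_{2}/C_{1})^{1/(\alpha-\beta+1)}\big)^{\beta}=C_{1}(C_{2}/C_{1})^{\alpha/(\alpha-\beta+1)}<C_{2}$ when $\beta>1$ and $C_{2}<C_{1}$, so the first inequality in the chain leading to \eqref{lem501-10} does not hold as written there. Your constant is the one that makes the argument close, and since $(C_{2}/C_{1})^{(\alpha-\beta)/\alpha}\ge(C_{2}/C_{1})^{(\alpha-\beta)/(\alpha-\beta+1)}$ for $\beta\ge 1$ and $C_{2}/C_{1}<1$, the bound you obtain is at least as strong as, and therefore implies, the stated estimate \eqref{lem501-2}; likewise your identity $C_{1}(C_{2}/C_{1})^{(\alpha-\beta)/\alpha}=C_{2}(C_{1}/C_{2})^{\beta/\alpha}$ reproduces the stated exponential rate exactly when $0<\beta\le 1$. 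The only point worth making explicit is the routine one you leave implicit: once $h$ falls below $h^{*}$ it remains there by the part (1) argument, and the right-hand sides of both displayed bounds are bounded below by $h^{*}$, so the estimates persist past the first crossing time.
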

\begin{proof}

{\bf Case 1.} If $h(0)\leq
\Big(\frac{C_{2}}{C_{1}}\Big)^{\frac{1}{\alpha}},$ then we have the following
claim
\begin{align}\label{lem501-3} h(t)\leq
\Big(\frac{C_{2}}{C_{1}}\Big)^{\frac{1}{\alpha}},~\mathrm{for}~t\geq0. \end{align}

If not, then there would exist a $s_{0}>0$ and a $s_{1}>0$ such that
\begin{align}\label{lem501-4} h(s_{0})=
\Big(\frac{C_{2}}{C_{1}}\Big)^{\frac{1}{\alpha}},~h(t)>
\Big(\frac{C_{2}}{C_{1}}\Big)^{\frac{1}{\alpha}},~\mathrm{for}~s_{0}<t<s_{1}. \end{align}
Thus there exists a $s_{2}\in(s_{0},s_{1})$ such that
\begin{align}\label{lem501-5}
h'(s_{2})=\frac{h(s_{1})-h(s_{0})}{s_{1}-s_{0}}>0.\end{align}
On the other hand, noting that $h(t)>
\Big(\frac{C_{2}}{C_{1}}\Big)^{\frac{1}{\alpha}}\geq\Big(\frac{C_{2}}{C_{1}}\Big)^{\frac{1}{\beta}}~\mathrm{for}~s_{0}<t<s_{1}$, we have
$$C_{1} \min\{h^{\alpha}(t),h^{\beta}(t)\}>C_{2}\Longrightarrow h'(t)<0,~\mathrm{for}~s_{0}<t<s_{1}.$$  This is a
contradiction.

{\bf Case 2.} If $h(0)>\Big(\frac{C_{2}}{C_{1}}\Big)^{\frac{1}{\alpha}},$
then we claim that either
\begin{align}\label{lem501-6} h(t)\geq
\Big(\frac{C_{2}}{C_{1}}\Big)^{\frac{1}{\alpha}},~\mathrm{for}~t\geq0,\end{align}
 or there
exists a $s^{*}>0$ such that
\begin{align}\label{lem501-7}
h(t)>\Big(\frac{C_{2}}{C_{1}}\Big)^{\frac{1}{\alpha}},~\mathrm{for}~0<t<s^{*};~
h(t)\leq\Big(\frac{C_{2}}{C_{1}}\Big)^{\frac{1}{\alpha}},~\mathrm{for}~t\geq
s^{*}.
\end{align}
If \eqref{lem501-6} holds, we have $ h(t)\geq
\Big(\frac{C_{2}}{C_{1}}\Big)^{\frac{1}{\alpha}}\geq\Big(\frac{C_{2}}{C_{1}}\Big)^{\frac{1}{\beta}},~\mathrm{for}~t\geq0.$
So
\begin{align*}\label{lem501-8}
h'(t)\leq C_{2}-C_{1} \min\{h^{\alpha}(t),h^{\beta}(t)\}<0\Longrightarrow
h(t)\leq  h(0).
\end{align*}
If \eqref{lem501-7} holds, the proof of this part is analogous to that in Case
1 and \eqref{lem501-6}. In summary, we have $$h(t)\leq  h(0).$$

In the forthcoming proof, we analyze the asymptotic behavior of the solution under the assumption that $h(0)>\Big(\frac{C_{2}}{C_{1}}\Big)^{\frac{1}{\alpha}}$. At first, we consider the case $\beta>1$. In such case,  we claim the following inequality holds:
\begin{equation}\label{lem501-9}
h(t)\leq\Big(\frac{C_{2}}{C_{1}}\Big)^{\frac{1}{\alpha}}+\Big[\Big(h(0)-\Big(\frac{C_{2}}{C_{1}}\Big)^{\frac{1}{\alpha}}\Big)^{1-\beta}
+C_{1}\Big(\frac{C_{2}}{C_{1}}\Big)^{\frac{\alpha-\beta}{\alpha-\beta+1}}(\beta-1)t\Big]^{\frac{1}{1-\beta}}.
\end{equation}
If \eqref{lem501-6} holds, we have $C_{1}h^{\alpha}(t)\geq C_{2}$. We let $z(t)=h(t)-\Big(\frac{C_{2}}{C_{1}}\Big)^{\frac{1}{\alpha-\beta+1}}\geq0, \mathrm{for}~t\geq 0,$ and then we apply the inequality $(a+b)^{\beta}\geq a^{\beta}+b^{\beta},~a,b\geq0,~\beta>1$ to obtain the following inequality
\begin{align*}
z'(t)+C_{1}\Big(\frac{C_{2}}{C_{1}}\Big)^{\frac{\alpha-\beta}{\alpha-\beta+1}}z^{\beta}(t)+C_{2}&\leq z'(t)+C_{1}\Big(\frac{C_{2}}{C_{1}}\Big)^{\frac{\alpha-\beta}{\alpha-\beta+1}}\Bigg(z(t)+\Big(\frac{C_{2}}{C_{1}}\Big)^{\frac{1}{\alpha-\beta+1}}\Bigg)^{\beta}\\
&=h'(t)
+C_{1}\Big(\frac{C_{2}}{C_{1}}\Big)^{\frac{\alpha-\beta}{\alpha-\beta+1}} h^{\beta}(t)\\
&\leq h'(t)+C_{1} \min\{h^{\alpha}(t),h^{\beta}(t)\}\leq C_{2},
\end{align*}
which implies that
\begin{align}\label{lem501-10}
z'(t)+C_{1}\Big(\frac{C_{2}}{C_{1}}\Big)^{\frac{\alpha-\beta}{\alpha-\beta+1}}z^{\beta}(t)\leq 0.
\end{align}
Combining \eqref{lem501-10} with $\hbox{Gronwall's}$ inequality, we have
\begin{align}\label{lem501-11}
z(t)\leq  \Big(z^{1-\beta}(0)+C_{1}\Big(\frac{C_{2}}{C_{1}}\Big)^{\frac{\alpha-\beta}{\alpha-\beta+1}}(\beta-1)t\Big)^{\frac{1}{1-\beta}}.
\end{align}
The first inequality in \eqref{lem501-2} follows from \eqref{lem501-11}.

If \eqref{lem501-7} holds, for $0<t<s^{*}$, we follow the lines of the proof of \eqref{508} to get
\begin{align}\label{lem501-12}
z(t)\leq  \Big(z^{1-\beta}(0)+C_{1}\Big(\frac{C_{2}}{C_{1}}\Big)^{\frac{\alpha-\beta}{\alpha-\beta+1}}(\beta-1)t\Big)^{\frac{1}{1-\beta}},~\mathrm{for}~0<t\leq  s^{*}.
\end{align}
For $t\geq  s^{*}$, it is obvious that
\begin{align}\label{lem501-13}
h(t)\leq\Big(\frac{C_{2}}{C_{1}}\Big)^{\frac{1}{\alpha}}+\Big[\Big(h(0)-\Big(\frac{C_{2}}{C_{1}}\Big)^{\frac{1}{\alpha}}\Big)^{1-\beta}
+C_{1}\Big(\frac{C_{2}}{C_{1}}\Big)^{\frac{\alpha-\beta}{\alpha-\beta+1}}(\beta-1)t\Big]^{\frac{1}{1-\beta}}.
\end{align}
Immediately, the first inequality \eqref{lem501-2} is also obtained from \eqref{lem501-12} and \eqref{lem501-13}.

For $0<\beta\leq 1$, the proof of this part is analogous to that of \eqref{513}, we omit it here. The proof of Lemma \ref{lem501} is complete.
\end{proof}

For simplicity, let us recall that $B_{0}$ is the embedding constant of  $\|u\|_{2}\leq B_{0}\|\nabla u\|_{p(.)},~$for $u\in V(\Omega)$ and define
\begin{equation*}
\begin{split}
M_{1}=\max\Big\{(2B^{r^{+}})^{\frac{p^{-}}{p^{-}-r^{+}}},(2B^{r^{+}})^{\frac{p^{+}}{p^{+}-r^{+}}},(2B^{r^{-}})^{\frac{p^{-}}{p^{-}-r^{-}}},(2B^{r^{-}})^{\frac{p^{+}}{p^{+}-r^{-}}}\Big\},
\end{split}
\end{equation*}
we have the following results.
\begin{theorem}\label{Thm500}
Suppose that the exponents $p(x),r(x)$ satisfy
$\eqref{addequ101}$ and $1<r^{+}<p^{-}$, then the solution
of Problem $\eqref{equ101}$ exists globally for arbitrary initial energy. Further, the following asymptotic estimates hold for any $t>0$

(i) if $M_{1}\geq\frac{1}{2}$, then
\begin{equation*}
\frac{\|u(t)\|^{2}_{2}}{B_{0}^{2}}\leq\begin{cases}
(2M_{1})^{\frac{2}{p^{-}}},&p^{-}>1,~\frac{\|u_{0}\|_{2}^{2}}{B_{0}^{2}}\leq (2M_{1})^{\frac{2}{p^{-}}};\\
(2M_{1})^{\frac{2}{p^{-}}}+\Big[\Big(\frac{\|u_{0}\|_{2}^{2}}{B_{0}^{2}}-(2M_{1})^{\frac{2}{p^{-}}}\Big)^{\frac{2-p^{-}}{2}}
+\frac{(p^{-}-2)t}{2B_{0}^{2}}\Big]^{\frac{2}{2-p^{-}}},&p^{-}>2,~\frac{\|u_{0}\|_{2}^{2}}{B_{0}^{2}}> (2M_{1})^{\frac{2}{p^{-}}};\\
2M_{1}(\frac{\|u_{0}\|_{2}^{2}}{B_{0}^{2}})^{\frac{2-p^{-}}{2}}+\frac{\|u_{0}\|_{2}^{2}}{B_{0}^{2}}
\Big(1-\frac{2M_{1}B_{0}^{p^{-}}}{\|u_{0}\|_{2}^{p^{-}}}\Big)e^{-\frac{\|u_{0}\|_{2}^{p^{-}-2}}{B^{p^{-}}_{0}}t},&p^{-}\leq2,~
\frac{\|u_{0}\|_{2}^{2}}{B_{0}^{2}}> (2M_{1})^{\frac{2}{p^{-}}};
\end{cases}
\end{equation*}

(ii) if $M_{1}<\frac{1}{2}$, then
\begin{equation*}
\frac{\|u(t)\|^{2}_{2}}{B_{0}^{2}}\leq\begin{cases}
(2M_{1})^{\frac{2}{p^{+}}},&p^{-}>1,~\frac{\|u_{0}\|_{2}^{2}}{B_{0}^{2}}\leq (2M_{1})^{\frac{2}{p^{+}}};\\
(2M_{1})^{\frac{2}{p^{+}}}+\Big[\Big(\frac{\|u_{0}\|_{2}^{2}}{B_{0}^{2}}-(2M_{1})^{\frac{2}{p^{+}}}\Big)^{\frac{2-p^{-}}{2}}
+C_{3}t\Big]^{\frac{2}{2-p^{-}}},&p^{-}>2,~\frac{\|u_{0}\|_{2}^{2}}{B_{0}^{2}}> (2M_{1})^{\frac{2}{p^{+}}};\\
(2M_{1})^{\frac{p^{-}}{p^{+}}}\frac{\|u_{0}\|_{2}^{2-p^{-}}}{B_{0}^{2-p^{-}}}
+\frac{\|u_{0}\|_{2}^{2}}{B_{0}^{2}}\Big(1-\frac{(2M_{1})^{\frac{p^{-}}{p^{+}}}B_{0}^{p^{-}}}{\|u_{0}\|_{2}^{p^{-}}}\Big)
e^{-C_{4}t},&p^{-}\leq2,~
\frac{\|u_{0}\|_{2}^{2}}{B_{0}^{2}}> (2M_{1})^{\frac{2}{p^{+}}},
\end{cases}
\end{equation*}
where
\begin{align*}
C_{3}=(2M_{1})^{\frac{p^{+}-p^{-}}{p^{+}-p^{-}+2}}\frac{(p^{-}-2)}{2B_{0}^{2}},
C_{4}=\frac{\|u_{0}\|^{p^{-}-2}_{2}}{B^{p^{-}}_{0}}(2M_{1})^{\frac{p^{+}-p^{-}}{p^{+}}}.
\end{align*}
\end{theorem}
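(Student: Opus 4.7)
The guiding heuristic is that when $r(x)<p(x)$ the diffusion dominates the source, so testing the equation with $u$ should yield an autonomous differential inequality for $\|u(t)\|_2^2$ of exactly the type handled by the abstract Lemmas \ref{lem500} and \ref{lem501}. My plan is to derive this inequality, read off the constants $C_1,C_2$ in terms of $B_0$ and $M_1$, and then invoke the two lemmas directly to produce the three branches in each of cases (i) and (ii).

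First, following the derivation of \eqref{3.33}, pick $\xi=u$ in Definition \ref{def301} on $[t,t+\delta]$ and let $\delta\to 0^+$ to obtain
\begin{equation*}
\tfrac{1}{2}\tfrac{d}{dt}\|u\|_2^2+\int_\Omega|\nabla u|^{p(x)}\,dx=\int_\Omega|u|^{r(x)}\,dx.
\end{equation*}
Lemma \ref{PRO102} combined with the embedding \eqref{201} yields $\int_\Omega|u|^{r(x)}\,dx\leq \max\{B^{r^+}\|\nabla u\|_{p(\cdot)}^{r^+},\,B^{r^-}\|\nabla u\|_{p(\cdot)}^{r^-}\}$. Since $r^+<p^-$, Young's inequality applied to each of the four pairings $(r^\pm,p^\pm)$ in the form $B^{r}s^{r}\leq \tfrac12\,s^{p}+(2B^{r})^{p/(p-r)}$, together with $\int_\Omega|\nabla u|^{p(x)}\,dx\geq\min\{\|\nabla u\|_{p(\cdot)}^{p^+},\|\nabla u\|_{p(\cdot)}^{p^-}\}$ from Lemma \ref{PRO102}, gives the absorbing estimate
\begin{equation*}
\tfrac{d}{dt}\|u\|_2^2+\min\bigl\{\|\nabla u\|_{p(\cdot)}^{p^+},\|\nabla u\|_{p(\cdot)}^{p^-}\bigr\}\leq 2M_1,
\end{equation*}
with $M_1$ precisely as stated (the maximum of the four Young constants).

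Next, the embedding $\|u\|_2\leq B_0\|\nabla u\|_{p(\cdot)}$ lets me replace the gradient norm; after dividing by $B_0^2$, the quantity $h(t):=\|u(t)\|_2^2/B_0^2$ satisfies
\begin{equation*}
h'(t)+\frac{1}{B_0^2}\min\bigl\{h(t)^{p^+/2},h(t)^{p^-/2}\bigr\}\leq \frac{2M_1}{B_0^2},
\end{equation*}
which matches \eqref{add500} and \eqref{addlem501} with $\alpha=p^+/2$, $\beta=p^-/2$, $C_1=1/B_0^2$, $C_2=2M_1/B_0^2$, and thresholds $(C_2/C_1)^{1/\beta}=(2M_1)^{2/p^-}$, $(C_2/C_1)^{1/\alpha}=(2M_1)^{2/p^+}$. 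The dichotomy $C_2\geq C_1$ versus $C_1>C_2$ coincides with $M_1\geq\tfrac12$ versus $M_1<\tfrac12$, so Lemma \ref{lem500} delivers the three subcases of (i) and Lemma \ref{lem501} delivers the three subcases of (ii); a direct substitution of $\alpha,\beta,C_1,C_2$ reproduces the constants $C_3$, $C_4$ and all the stated thresholds and exponents. Global existence then follows because $\|u(t)\|_2$ is uniformly bounded, while Lemma \ref{lem301} together with one more Young absorption applied to the source term in $J(u)$ gives a uniform bound on $\int_\Omega|\nabla u|^{p(x)}\,dx$, so $\|u\|_{V(\Omega)}$ remains finite on $[0,\infty)$; at the Galerkin level this extends every approximation globally and produces a global weak limit as in the proof of Lemma \ref{lem301}.

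The main technical obstacle is producing the constant $M_1$ in precisely the stated form. Because Lemma \ref{PRO102} forces the modulars $\int_\Omega|u|^{r(x)}\,dx$ and $\int_\Omega|\nabla u|^{p(x)}\,dx$ to be squeezed between different powers of the respective norms depending on whether those norms exceed $1$, one has to run Young's inequality for all four combinations of $(r^\pm,p^\pm)$ and take the maximum to obtain a bound that is \emph{uniform in time} and of the clean $\min$--$\max$ shape \eqref{add500}. Once this step is executed, the remainder is essentially algebra matching $(C_1,C_2,\alpha,\beta)$ to the conclusions of Lemmas \ref{lem500}--\ref{lem501}.
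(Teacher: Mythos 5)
Your proposal is correct and follows essentially the same route as the paper: test with $u$ to get $\tfrac12\tfrac{d}{dt}\|u\|_2^2+\int_\Omega|\nabla u|^{p(x)}dx=\int_\Omega|u|^{r(x)}dx$, absorb the source via Lemma \ref{PRO102}, the embedding \eqref{201} and Young's inequality (using $r^+<p^-$) to produce the constant $M_1$, convert to the scalar inequality $w'+B_0^{-2}\min\{w^{p^-/2},w^{p^+/2}\}\leq 2M_1B_0^{-2}$ for $w=\|u\|_2^2/B_0^2$, and conclude by Lemmas \ref{lem500} and \ref{lem501} according to whether $M_1\geq\tfrac12$ or $M_1<\tfrac12$. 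The identification $\alpha=p^+/2$, $\beta=p^-/2$, $C_1=B_0^{-2}$, $C_2=2M_1B_0^{-2}$ and the resulting constants match the paper's Step 1--Step 2 exactly.
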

\begin{proof} The proof of this theorem will be divided into some steps.

{\bf Step 1. Establish a differential inequality.}
Let
 $G(t)=\int_{\Omega}|u|^{2}dx.$  According to Definition \ref{def301}, we have
\begin{equation}\label{Thm5001}
\begin{split}
G'(t)&=2\int_{\Omega}u u_{t}dx =-2\int_{\Omega}|\nabla
u|^{p(x)}dx+2\int_{\Omega}|u|^{r(x)}dx.
\end{split}
\end{equation}
On the one hand, let us recall \eqref{equ306}, then
\begin{equation}\label{Thm5002}
\begin{split}
\int_{\Omega}|u|^{r(x)}dx &\leq  \max\Bigg\{B^{r^{+}}\max\Big\{\big(\int_{\Omega}|\nabla u|^{p(x)}dx\big)^{\frac{r^{+}}{p^+}},\big(\int_{\Omega}|\nabla u|^{p(x)}dx\big)^{\frac{r^{+}}{p^-}}\Big\},\\
&~~~~B^{r^{-}}\max\Big\{\big(\int_{\Omega}|\nabla u|^{p(x)}dx\big)^{\frac{r^{-}}{p^+}},\big(\int_{\Omega}|\nabla u|^{p(x)}dx\big)^{\frac{r^{-}}{p^-}}\Big\}  \Bigg\}.
\end{split}
\end{equation}
On the other hand, it follows from  $r^{+}<p^{-}$ and  Young's inequality
\begin{align}\label{Thm5003}
\nonumber&\max\Bigg\{B^{r^{+}}\max\Big\{\big(\int_{\Omega}|\nabla u|^{p(x)}dx\big)^{\frac{r^{+}}{p^+}},\big(\int_{\Omega}|\nabla u|^{p(x)}dx\big)^{\frac{r^{+}}{p^-}}\Big\},\\
&~~~~B^{r^{-}}\max\Big\{\big(\int_{\Omega}|\nabla u|^{p(x)}dx\big)^{\frac{r^{-}}{p^+}},\big(\int_{\Omega}|\nabla u|^{p(x)}dx\big)^{\frac{r^{-}}{p^-}}\Big\}  \Bigg\}
\nonumber\\&\leq\frac{1}{2}\int_{\Omega}|\nabla
u|^{p(x)}dx+\max\Big\{(2B^{r^{+}})^{\frac{p^{-}}{p^{-}-r^{+}}},(2B^{r^{+}})^{\frac{p^{+}}{p^{+}-r^{+}}},(2B^{r^{-}})^{\frac{p^{-}}{p^{-}-r^{-}}},(2B^{r^{-}})^{\frac{p^{+}}{p^{+}-r^{-}}}\Big\}.
\end{align}

Therefore, \eqref{Thm5001}-\eqref{Thm5003} indicate that
\begin{equation}\label{Thm5004}
G'(t)+\int_{\Omega}|\nabla
u|^{p(x)}dx\leq2M_1.
\end{equation}
Once again, we apply \eqref{PRO102} and Sobolev embedding inequality $\|u\|_{2}\leq B_{0}\|\nabla u\|_{p(.)}$ to obtain
\begin{align}\label{Thm5005}
\min\Bigg\{\Big(\frac{\|u\|^{2}_{2}}{B^{2}_{0}}\Big)^{\frac{p^{-}}{2}},\Big(\frac{\|u\|^{2}_{2}}{B^{2}_{0}}\Big)^{\frac{p^{+}}{2}}\Bigg\}\leq \int_{\Omega}|\nabla
u|^{p(x)}dx.
\end{align}
Set $w(t)=\frac{\|u\|^{2}_{2}}{B^{2}_{0}}$. Then, \eqref{Thm5004} and \eqref{Thm5005} show that
\begin{align}\label{Thm5006}
w'(t)+\frac{1}{B^{2}_{0}}\min\Big\{w^{\frac{p^{-}}{2}},w^{\frac{p^{+}}{2}}\Big\}\leq\frac{2M_{1}}{B^{2}_{0}}.
\end{align}

{\bf Step 2. Asymptotic estimate.} In the forthcoming proof the cases that $M_{1}\geq\frac{1}{2}$ and $M_{1}<\frac{1}{2}$ will be discussed
separately.

(i) If $M_{1}\geq\frac{1}{2}$, then, by means of \eqref{Thm5006} and Lemma \ref{lem500}, we have
\begin{equation*}
w(t)\leq\begin{cases}
(2M_{1})^{\frac{2}{p^{-}}}+\Big[(w(0)-(2M_{1})^{\frac{2}{p^{-}}})_{+}^{\frac{2-p^{-}}{2}}
+\frac{(p^{-}-2)t}{2B_{0}^{2}}\Big]^{\frac{2}{2-p^{-}}},&~~p^{-}>2;\\
2M_{1}w^{\frac{2-p^{-}}{2}}(0)+w(0)\Big(1-2M_{1}
w^{\frac{-p^{-}}{2}}(0)\Big)_{+}e^{-\frac{w^{\frac{p^{-}-2}{2}}(0)}{B^{2}_{0}}t},&~~1<p^{-}\leq2,
\end{cases}
\end{equation*}
where $Z_{+}=\max\{Z,0\}.$

(ii) If $M_{1}<\frac{1}{2}$, then, by means of \eqref{Thm5006} and Lemma \ref{lem501}, we have
\begin{equation*}
w(t)\leq\begin{cases}
(2M_{1})^{\frac{2}{p^{+}}}+\Big[(w(0)-(2M_{1})^{\frac{2}{p^{+}}})_{+}^{\frac{2-p^{-}}{2}}
+C_{3}t\Big]^{\frac{2}{2-p^{-}}},&~~p^{-}>2;\\
(2M_{1})^{\frac{p^{-}}{p^{+}}}w^{\frac{2-p^{-}}{2}}(0)+w(0)
\Big(1-(2M_{1})^{\frac{p^{-}}{p^{+}}}w^{\frac{-p^{-}}{2}}(0)\Big)_{+}
e^{-C_{4}t},&~~1<p^{-}\leq2.
\end{cases}
\end{equation*}
\end{proof}
At last, we discuss the case $r^{-}\leq \min\{p^{+},2\}.$  Our main result is as follows:
\begin{theorem}\label{Thm502}
Suppose that $p(x),r(x)$ satisfy
$\eqref{addequ101}$ and $1<r^{-}\leq \min\{p^{+},2\},~r^{+}<2$, then the solution
of Problem $\eqref{equ101}$ exists globally for negative initial energy. Further, the following asymptotic estimates hold for any $t>0$
\begin{align*}
\Big(\|u_{0}\|_{2}^{2}+\frac{2p^{+}E(0)}{C_{5}}\Big)e^{-C_{5}t}-\frac{2p^{+}E(0)}{C_{5}}\leq\|u\|_{2}^{2}
\leq\left[\frac{C_{7}}{C_{6}}+\Big(\|u_{0}\|_{2}^{2-r^{+}}-\frac{C_{7}}{C_{6}}\Big)e^{-C_{6}t}\right]^{\frac{2}{2-r^{+}}},
\end{align*}
where the coefficients $C_{5}, C_{6}$ and $C_{7}$ will be defined in \eqref{Thm5028}  and \eqref{Thm5031}.
\end{theorem}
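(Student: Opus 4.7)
The plan is to take the test function $\xi=u$ in Definition~\ref{def301} and derive (as in \eqref{3.33}) the identity
\[
\tfrac{1}{2}\tfrac{d}{dt}\|u\|_2^2 \;=\; -\!\int_{\Omega}|\nabla u|^{p(x)}\,dx + \!\int_{\Omega}|u|^{r(x)}\,dx,
\]
and then squeeze $\|u\|_2^2$ between two first-order scalar ODIs that can be integrated by the method of integrating factor, with the two sides playing asymmetric roles dictated by $E(0)<0$ and by the structural assumption $r^-\le\min\{p^+,2\}$, $r^+<2$.

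For the lower bound, I would exploit $J(u(t))\le E(0)<0$ (Lemma~\ref{lem301}) to trade $\int|\nabla u|^{p(x)}dx$ for $\int|u|^{r(x)}dx$: from $J(u)\le E(0)$ and $\frac{1}{p^+}\int|\nabla u|^{p(x)}\le \int\frac{1}{p(x)}|\nabla u|^{p(x)}$, I get $\int|\nabla u|^{p(x)}dx \le \frac{p^+}{r^-}\int|u|^{r(x)}dx+p^+E(0)$. Substituting into the identity yields
\[
\tfrac{d}{dt}\|u\|_2^2 \;\ge\; 2\!\left(1-\tfrac{p^+}{r^-}\right)\!\int_{\Omega}|u|^{r(x)}dx - 2p^+E(0),
\]
in which the first coefficient is non-positive because $r^-\le p^+$. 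I would then use $r^+<2$ and the Young inequality $|u|^{r(x)}\le\frac{r(x)}{2}|u|^{2}+\frac{2-r(x)}{2}$ to bound $\int|u|^{r(x)}dx\le\frac{r^+}{2}\|u\|_2^2+\frac{2-r^-}{2}|\Omega|$ from above, which, combined with the preceding display, yields a linear ODI of the form $\tfrac{d}{dt}\|u\|_2^2\ge -C_5\|u\|_2^2-2p^+E(0)$. Solving this by the integrating factor $e^{C_5 t}$ produces the stated lower bound; note that $-2p^+E(0)>0$, so the lower bound tends to a strictly positive equilibrium $-2p^+E(0)/C_5$, as expected under a negative initial energy.

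For the upper bound, I would bound the source term above using the embedding $L^2\hookrightarrow L^{r^\pm}(\Omega)$ (valid since $r^\pm\le 2$): via Hölder, $\int|u|^{r(x)}dx\le C\|u\|_2^{r^+}+C|\Omega|$, and bound the dissipation below through the Poincaré-Sobolev inequality $\|u\|_2\le B_0\|\nabla u\|_{p(\cdot)}$ together with Lemma~\ref{PRO102} to obtain a power of $\|u\|_2^2$. Feeding both into the identity gives a Bernoulli-type ODI $\tfrac{d}{dt}\|u\|_2^2 \le -\tilde C_6\|u\|_2^{2}+\tilde C_7\|u\|_2^{r^+}$. Setting $z(t)=(\|u\|_2^2)^{(2-r^+)/2}$ linearizes this into $z'+C_6 z\le C_7$, which integrates directly by the integrating factor $e^{C_6 t}$ to give $z(t)\le\frac{C_7}{C_6}+(z(0)-\frac{C_7}{C_6})e^{-C_6 t}$; raising to the power $2/(2-r^+)$ recovers the stated upper bound. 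The a~priori boundedness of $\|u\|_2^2$ supplied by the upper estimate, together with the Galerkin construction of Lemma~\ref{lem301}, secures global existence by a standard continuation argument.

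The main technical obstacle will be the mixed-exponent behavior of the modular $\int|\nabla u|^{p(x)}dx$, since Lemma~\ref{PRO102} gives $\int|\nabla u|^{p(x)}dx\ge\min\{\|\nabla u\|_{p(\cdot)}^{p^-},\|\nabla u\|_{p(\cdot)}^{p^+}\}$, and one must select the correct branch uniformly in $t$. Here the negativity of $E(0)$ is essential: it forces $\|u\|_2$ (hence $\|\nabla u\|_{p(\cdot)}$) to stay away from $0$, so the ``right'' branch is consistently the one producing a $\|u\|_2^{p^\pm}$ term comparable (after further manipulation using $r^+<2\le p^+$ or $p^-\le 2$) to a linear term $\|u\|_2^2$ with the correct sign. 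Once this selection is pinned down, the derivation of the ODIs and the explicit constants $C_5,C_6,C_7$ are book-keeping.
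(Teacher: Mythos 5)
Your overall strategy coincides with the paper's: start from the identity $\tfrac12\tfrac{d}{dt}\|u\|_2^2=-\int_\Omega|\nabla u|^{p(x)}dx+\int_\Omega|u|^{r(x)}dx$, use $J(u(t))\le E(0)$ to trade the gradient modular for the source in the lower estimate (arriving at exactly the paper's \eqref{Thm5024}), and close the upper estimate with the Bernoulli substitution $L=G^{1-r^+/2}$, $G=\|u\|_2^2$. The place where your write-up would not deliver the stated inequalities is the linearization step on both sides. For the lower bound you invoke Young's inequality $|u|^{r(x)}\le\frac{r(x)}{2}|u|^2+\frac{2-r(x)}{2}$; since this upper bound for $\int|u|^{r(x)}dx$ is multiplied by the \emph{negative} coefficient $2(1-p^+/r^-)$, it contributes an extra negative constant $-\frac{(p^+-r^-)(2-r^-)}{r^-}|\Omega|$ to the right-hand side, and the ODI you then write, $G'\ge -C_5G-2p^+E(0)$, does not follow — you have silently dropped that term. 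The consequence is not cosmetic: the equilibrium value of your lower bound would be $-\bigl(2p^+E(0)+\mathrm{const}\bigr)/C_5$ rather than the $-2p^+E(0)/C_5$ asserted in the theorem. The same defect appears in the upper bound, where your H\"older step produces $\int|u|^{r(x)}dx\le C\|u\|_2^{r^+}+C|\Omega|$ and the additive $C|\Omega|$ is then absent from the Bernoulli ODI $G'\le-\tilde C_6G+\tilde C_7G^{r^+/2}$.

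The missing ingredient that repairs both steps is the paper's Step 2: one first proves a \emph{uniform positive lower bound} $G(t)\ge M_2>0$ (see \eqref{Thm5025}--\eqref{Thm5026}), obtained from $E(t)\le E(0)<0$ together with \eqref{Thm5022} and a differential-inequality argument. With $M_2$ in hand, the source is linearized \emph{multiplicatively} — $\max\{G^{r^+/2},G^{r^-/2}\}\le\max\{M_2^{(r^+-2)/2},M_2^{(r^--2)/2}\}\,G$ for $r^\pm\le 2$ — so no additive constant is created and the constant term in the lower ODI remains exactly $-2p^+E(0)$; likewise the dissipation $\min\{(G/B_0^2)^{p^+/2},(G/B_0^2)^{p^-/2}\}$ is bounded below by a genuine linear term in $G$, and any leftover additive constants in the upper estimate can be absorbed as $C|\Omega|\le C|\Omega|M_2^{-r^+/2}G^{r^+/2}$. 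You do gesture at this ("negativity of $E(0)$ forces $\|u\|_2$ away from $0$") as the device for choosing the correct branch of the min, but you neither derive $M_2$ nor use it where it is actually indispensable, namely to produce the specific constants $C_5$, $C_6$, $C_7$ of \eqref{Thm5028} and \eqref{Thm5031}, all of which depend on $M_2$. Once Step 2 is supplied and the linearizations are done multiplicatively, your argument becomes the paper's proof.
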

\begin{proof} This proof will be divided into some steps.

{\bf Step 1. Establish a differential inequality about $\|u\|_{2}.$ }

By means of Lemma \ref{PRO102}, we get
\begin{equation}\label{Thm5022}
\int_{\Omega}|u|^{r(x)}dx\leq \max\{\|u\|^{r^{-}}_{r(.)},\|u\|^{r^{+}}_{r(.)}\}
\leq (1+|\Omega|)^{r^{+}}\max\{\|u\|_{2}^{r^{-}},\|u\|_{2}^{r^{+}}\};
\end{equation}
\begin{equation}\label{addThm5022}
\|u\|_{2}\leq  B_{0}\|\nabla u\|_{p(.)}\leq
B_{0}\max\Big\{(\int_{\Omega}|\nabla
u|^{p(x)}dx)^{\frac{1}{p^{+}}},~(\int_{\Omega}|\nabla
u|^{p(x)}dx)^{\frac{1}{p^{-}}}\Big\}.
\end{equation}

Thus, \eqref{Thm5001} and \eqref{Thm5022} \eqref{addThm5022} yield
\begin{align}\label{Thm5023}
G'(t)+2\min\Bigg\{\Big(\frac{G(t)}{B_{0}^{2}}\Big)^{\frac{p^{+}}{2}},\Big(\frac{G(t)}{B_{0}^{2}}\Big)^{\frac{p^{-}}{2}}\Bigg\}
\leq
2(1+|\Omega|)^{r^{+}}\max\Bigg\{G^{\frac{r^{+}}{2}}(t),G^{\frac{r^{-}}{2}}(t)\Bigg\}.
\end{align}

{\bf Step 2. Build up a lower estimate for $G(t)$.}
Let us first recall \eqref{Thm5001}, we get
\begin{equation}\label{Thm5024}
\begin{split}
G'(t)&=-2\int_{\Omega}|\nabla
u|^{p(x)}dx+2\int_{\Omega}|u|^{r(x)}dx\\
&\geq\frac{2(r^{-}-p^{+})}{r^{-}}\int_{\Omega}|u|^{r(x)}dx-2p^{+}E(0).
\end{split}
\end{equation}

Noting that $r(x)\leq 2$, \eqref{Thm5022} yields
\begin{align*}
\int_{\Omega}|u|^{r(x)}dx\leq(1+|\Omega|)^{r^{+}}\max\{G^{\frac{r^{+}}{2}}(t),G^{\frac{r^{-}}{2}}(t)\}.
\end{align*}
In \eqref{Thm5024}, we replace $\int_{\Omega}|u|^{r(x)}dx$  with $G(t)$ to obtain
\begin{align}\label{Thm5025}
G'(t)\geq\frac{2(r^{-}-p^{+})}{r^{-}}(1+|\Omega|)^{r^{+}}\max\{G^{\frac{r^{+}}{2}}(t),G^{\frac{r^{-}}{2}}(t)\}-2p^{+}E(0).
\end{align}
Obviously,  it follows from Lemma 4.4 of \cite{GG} and \eqref{Thm5025}
\begin{align}\label{Thm5026}
G(t)\geq\Bigg\{G(0),~\Big(\frac{p^{+}r^{-}E(0)}{(r^{-}-p^{+})(1+|\Omega|)^{r^{+}}}\Big)^{\frac{2}{r^{+}}},
\Big(\frac{p^{+}r^{-}E(0)}{(r^{-}-p^{+})(1+|\Omega|)^{r^{+}}}\Big)^{\frac{2}{r^{-}}}\Bigg\}:=M_{2}>0.
\end{align}

{\bf Step 3. Lower estimate of asymptotic behavior.} In fact, by \eqref{Thm5024}-\eqref{Thm5026}, we have
\begin{align*}
G'(t)+\frac{2(p^{+}-r^{-})}{r^{-}}(1+|\Omega|)^{r^{+}}\max\{M_{2}^{\frac{r^{+}-2}{2}},M_{2}^{\frac{r^{-}-2}{2}}\}G(t)\geq-2p^{+}E(0),
\end{align*}
which  implies that
\begin{align}\label{Thm5027}
G(t)\geq\Big(G(0)+\frac{2p^{+}E(0)}{C_{5}}\Big)e^{-C_{5}t}-\frac{2p^{+}E(0)}{C_{5}},
\end{align}
where
\begin{align}\label{Thm5028}
C_{5}=\frac{2(p^{+}-r^{-})}{r^{-}}(1+|\Omega|)^{r^{+}}\max\{M_{2}^{\frac{r^{+}-2}{2}},M_{2}^{\frac{r^{-}-2}{2}}\}.
\end{align}

{\bf Step 4. Upper estimate of asymptotic behavior.} Utilizing \eqref{Thm5023} and \eqref{Thm5026} and making a simple computation, we get
\begin{align}\label{Thm5029}
G'(t)&\nonumber+\frac{2}{B_0^{2}}
\min\Bigg\{\Big(\frac{M_{2}}{B_0^{2}}\Big)^{\frac{p^{+}-2}{2}},
\Big(\frac{M_{2}}{B_0^{2}}\Big)^{\frac{p^{-}-2}{2}}\Bigg\}G(t)\\
&\leq2B_0^{-r^{+}}(1+|\Omega|)^{\frac{r^{-}}{2}}
\max\Bigg\{\Big(\frac{M_{2}}{B_0^{2}}\Big)^{\frac{r^{-}-r^{+}}{2}},1\Bigg\}G^{\frac{r^{+}}{2}}(t),
\end{align}
which is equivalent to
\begin{align}\label{Thm5030}
L'(t)+C_{6}L(t)\leq C_{7},~~L(t)=G^{1-\frac{r^{+}}{2}}(t)
\end{align}
where
\begin{align}\label{Thm5031}
C_{6}&\nonumber=\frac{2-r^{+}}{B_0^{2}}\min\Bigg\{\Big(\frac{M_{2}}{B_0^{2}}\Big)^{\frac{p^{+}-2}{2}},
\Big(\frac{M_{2}}{B_0^{2}}\Big)^{\frac{p^{-}-2}{2}}\Bigg\},\\
C_{7}&=\max\Bigg\{\Big(\frac{M_{2}}{B_0^{2}}\Big)^{\frac{r^{-}-r^{+}}{2}},1\Bigg\}(2-r^{+})B_0^{-r^{+}}(1+|\Omega|)^{\frac{r^{+}}{2}}.
\end{align}
It is not hard to check that \eqref{Thm5030} may rewritten as
\begin{align}\label{Thm5032}
G(t)\leq\left[\frac{C_{7}}{C_{6}}+\Big(G^{\frac{2-r^{+}}{2}}(0)-\frac{C_{7}}{C_{6}}\Big)e^{-C_{6}t}\right]^{\frac{2}{2-r^{+}}},~t\geq0.
\end{align}
This proof is complete.
\end{proof}
\section{Comments}
We have established some new results on the global and non-global existence, extending \cite{GG}. Furthermore, we have also obtained some estimates of asymptotic behavior. Those results are new even when the exponents $p,~r$ are fixed constants.  In addition, our method proposed are directly applied to study some problems of \cite{RFAPMPJD,JXYCHJ,FCLCHX,QBZ} and the following problem
\begin{equation}\label{6304}
\begin{cases}
u_{t}=\mathrm{div}(|\nabla u|^{p(x)-2}\nabla u)+|u|^{r(x)-2}u,&(x,t)\in Q_{T},\\
u(x,t)=0,&(x,t)\in\Gamma_{T},\\
u(x,0)=u_{0}(x),&x\in\Omega.
\end{cases}
\end{equation}
We can replace $V(\Omega)$ by $W^{1,p(x)}_{0}(\Omega)=:\Big\{u\in W^{1,p(x)}(\Omega):~u=0,x\in\partial\Omega\Big\}.$  We define the associated functionals as follows:
\begin{equation*}
J(u)=\int_{\Omega}\frac{1}{p(x)}|\nabla u|^{p(x)}dx-\int_{\Omega}\frac{1}{r(x)}|u|^{r(x)}dx;
\end{equation*}
\begin{equation*}
\begin{split}
I(u)=\int_{\Omega}|\nabla u|^{p(x)}dx-\int_{\Omega}|u|^{r(x)}dx.
\end{split}
\end{equation*}
The rest of the discussion is the same as that of this paper and may be left to the readers.

$\bullet$ As a matter of fact,  we find out that
the anisotropy of the exponent $p(x)$ may bring many new problems.
{\bf For example, whether or not is the following identity true?}
 \begin{align}\label{6305}
 d=\inf\limits_{0\not\equiv u\in V(\Omega)}\sup\limits_{\lambda>0}J(\lambda u).
 \end{align}
Actually, it is well known that the identity above is always true for constant exponents $p,~r.$ However,  for non-constant exponent $p(x)$,  it is not clear whether the identity above is true or not. The main reason is that the modular version of Poincar\'{e} inequality is not valid.  In other words, the following inequality
\begin{align*}
\int_{\Omega}|u|^{p(x)}dx\leqslant C\int_{\Omega}|\nabla
u|^{p(x)}dx,~~u\in V(\Omega),
\end{align*}
is not always true. For example, set $B_{k}=\Big\{X=(x,y,z)\in R^{3}:|X|^{2}=x^{2}+y^{2}+z^{2}\leqslant k^{2}\Big\},k=1,2,3,$ and then
we may construct that two Lipschitz functions $p(X),~u(X)$ satisfy
\begin{equation*}
p(X)=\begin{cases}
\frac{3}{2}+|X|,~~&X\in B_{1},\\
\frac{5}{2},~~&X\in B_{2}\setminus B_{1},\\
\frac{9}{2}-|X|,~~&X\in B_{3}\setminus B_{2},
\end{cases}
~~\mathrm{and}~~ u(X)=\begin{cases}
\frac{3}{4}-|X|,~~&X\in B_{1},\\
-\frac{1}{4},~~&X\in B_{2}\setminus B_{1},\\
\frac{1}{172}(104|X|-251),~~&X\in B_{3}\setminus B_{2},
\end{cases}
\end{equation*}
where $B_{i}\setminus B_{j}:=\Big\{X\in R^{3}~: X\in B_{i}~\textrm{and}~X\not\in B_{j},~~j<i\Big\}.$

For $\varepsilon>0$, set $v_{\varepsilon}(X)=\varepsilon u(X)$. Then, a direct computation shows that
\begin{align*}
\iint_{B_{3}}|\nabla v_{\varepsilon}(X)|^{p(X)}dX&=
\iint_{B_{1}}|\nabla v_{\varepsilon}(X)|^{p(X)}dX
+\iint_{B_{3}\setminus B_{2}}|\nabla v_{\varepsilon}(X)|^{p(X)}dX\\
&\leqslant\frac{4\pi\varepsilon^{\frac{3}{2}}(\varepsilon-1)}{\ln \varepsilon}+
\frac{36\pi\varepsilon^{\frac{3}{2}}(\varepsilon-1)}{\ln \varepsilon}=
\frac{40\pi\varepsilon^{\frac{3}{2}}(\varepsilon-1)}{\ln \varepsilon};\\
\iint_{B_{3}}|v_{\varepsilon}(X)|^{p(X)}dX&\geqslant\iint_{B_{2}\setminus B_{1}}|v_{\varepsilon}(X)|^{p(X)}dX=
\frac{7\pi}{24}\varepsilon^{\frac{5}{2}}.
\end{align*}
Then, the quotient
 \begin{align*}
 \frac{\iint_{B_{3}}|\nabla v_{\varepsilon}(X)|^{p(X)}dX}{\iint_{B_{3}}|v_{\varepsilon}(X)|^{p(X)}dX}
 \leqslant\frac{\frac{40\pi\varepsilon^{\frac{3}{2}}(\varepsilon-1)}{\ln \varepsilon}}{\frac{7\pi}{24}\varepsilon^{\frac{5}{2}}}
 =\frac{960(\varepsilon-1)}{7\varepsilon\ln\varepsilon}\rightarrow0,~~\varepsilon\rightarrow\infty,
 \end{align*}
 which implies that
 \begin{align*}
 \inf\limits_{0\not\equiv u\in V(B_{3})}\frac{\iint_{B_{3}}|\nabla u(X)|^{p(X)}dX}{\iint_{B_{3}}|u(X)|^{p(X)}dX}=0.
  \end{align*}
  For $u\in W^{1,p(x)}_{0}(\Omega),$ similar examples may be referred to
  \cite[Example 8.2.7]{LDPHPM} or \cite[p.444-p.445]{XLF}.

$\bullet$ In addition,  we need to point out that the method used in this paper can not directly be applied to the case that the exponents depend on the space variable and time variable because the monotonicity of the energy functional fails(in other words, one can not obtain Lemma \ref{lem301}).

$\bullet$ Finally, when we finished this paper, the anonymous referee told us that the authors of \cite{LCNQVC} applied the similar method to analyze the blow-up and global existence of solutions to Problem \eqref{6304}. In \cite{LCNQVC}, the authors claimed that \eqref{6305} holds.  {\bf However, the validity of such claim is dubious.} In fact, as far as we know, the key step to prove \eqref{6305} is that one needs to compute the maximum point  $\lambda^{*}=\lambda^{*}(u)>0$ such that
$J(\lambda^{*}u)=\sup\limits_{\lambda>0}J(\lambda u).$  Therefore, for constant-exponents cases, it is not difficulty to verify that $\lambda^{*}$ satisfies
\begin{align*}
\int_{\Omega}|\nabla u|^{p}dx=(\lambda^{*})^{r-p}\int_{\Omega}|u|^{r}dx.
\end{align*}
However, the lack of homogeneity of the modular(that is $\int_{\Omega}|\lambda u|^{p(x)}dx\not\equiv\lambda^{p(x)}\int_{\Omega}|u|^{p(x)}dx$) makes the maximum point $\lambda^{*}$  depend ont only on the quotient $\frac{\|\nabla u\|_{p}}{\|u\|_{r}}$, but also on the space variable $x$ in the variable exponent case. So we think that the proofs of (ii) of  Lemma 4.1, Lemma 4.2  and Theorem 5.5 in \cite{LCNQVC} need to be carefully re-examined.

{\bf Acknowledgments.} The authors express their thanks to
the referees for their helpful comments and suggestions and to Prof. Wenjie Gao and Prof. Yuzhu Han for heplful discussions.

\end{document}